\newtheorem{theorem}{Theorem}[section]
\newtheorem{introthm}{Theorem}
\newtheorem*{theorem*}{Theorem}
\newtheorem{proposition}[theorem]{Proposition}
\newtheorem{lemma}[theorem]{Lemma}
\newtheorem{corollary}[theorem]{Corollary}
\theoremstyle{definition}
\newtheorem{example}[theorem]{Example}
\newtheorem{remark}[theorem]{Remark}
\newtheorem{definition}[theorem]{Definition}
\renewcommand{\AA}{\mathbb{A}}
\newcommand{\GG}{\mathbb{G}}
\newcommand{\FF}{\mathbb{F}}
\newcommand{\QQ}{\mathbb{Q}}
\newcommand{\UU}{\mathbb{U}}
\newcommand{\ZZ}{\mathbb{Z}}
\newcommand{\CC}{\mathbb{C}}
\newcommand{\AGL}{\textup{AGL}}
\newcommand{\GL}{\textup{GL}}
\newcommand{\SL}{\textup{SL}}
\newcommand{\BG}{{\textup{B}G}}
\newcommand{\cat}[1]{\textup{\bfseries #1}}
\newcommand{\Mod}{\cat{Mod}}
\newcommand{\Vect}{\cat{Vect}}
\newcommand{\Bord}{\cat{Bord}}
\newcommand{\Stck}{\cat{Stck}}
\newcommand{\RStck}{\cat{RStck}}
\newcommand{\Var}{\cat{Var}}
\newcommand{\Ab}{\cat{Ab}}
\newcommand{\LG}{\cat{LG}}
\newcommand{\Grpd}{\cat{Grpd}}
\newcommand{\FinGrpd}{\cat{FinGrpd}}
\newcommand{\FGGrpd}{\cat{FGGrpd}}
\newcommand{\Mnfd}{\cat{Mnfd}}
\newcommand{\id}{\textup{id}}
\newcommand{\K}{\textup{K}_0}
\newcommand{\Span}{\textup{Span}}
\newcommand{\Hom}{\textup{Hom}}
\newcommand{\Fr}{\textup{Fr}}
\DeclareMathOperator{\tr}{tr}
\DeclareMathOperator{\Spec}{Spec}
\DeclareMathOperator{\Aut}{Aut}
\newcommand{\stI}{\ooalign{\hidewidth $\int$\hidewidth\cr\rule[0.7ex]{1ex}{.4pt}}}
\newcommand{\bdscale}{0.5}
\newcommand{\bdunit}[1][1]{
\begin{tikzpicture}[semithick, scale=#1*\bdscale, baseline=-0.5ex]
\begin{scope}
    \draw (0,0) ellipse (0.2cm and 0.4cm);
    \draw (0,-0.4) arc (-90:90:0.75cm and 0.4cm);
\end{scope}
\end{tikzpicture}
}
\newcommand{\bdcounit}[1][1]{
\begin{tikzpicture}[semithick, scale=#1*\bdscale, baseline=-0.5ex]
\begin{scope}
    \draw (0,0) ellipse (0.2cm and 0.4cm);
    \draw (0,0.4) arc (90:270:0.75cm and 0.4cm);
\end{scope}
\end{tikzpicture}
}
\newcommand{\bdgamma}[1][1] {
\begin{tikzpicture}[semithick, scale=#1*\bdscale, baseline=-0.5ex]
\begin{scope}
    \draw (0,0.5) ellipse (0.2cm and 0.4cm);
    \draw (0,-0.5) ellipse (0.2cm and 0.4cm);
    \draw (0,0.9) arc (90:-90:1.1cm and 0.9cm);
    \draw (0,0.1) arc (90:-90:0.3cm and 0.1cm);
\end{scope}
\end{tikzpicture}
}
\newcommand{\bdbeta}[1][1] {
\begin{tikzpicture}[semithick, scale=#1*\bdscale, baseline=-0.5ex]
\begin{scope}
    \draw (0,0.5) ellipse (0.2cm and 0.4cm);
    \draw (0,-0.5) ellipse (0.2cm and 0.4cm);
    \draw (0,0.9) arc (90:270:1.1cm and 0.9cm);
    \draw (0,0.1) arc (90:270:0.3cm and 0.1cm);
\end{scope}
\end{tikzpicture}
}
\newcommand{\bdgenus}[1][1]{
\begin{tikzpicture}[semithick, scale=#1*\bdscale, baseline=-0.5ex]
\begin{scope}
    \draw (-1,0) ellipse (0.2cm and 0.4cm);
    \draw (-1,0.4) .. controls (-0.5,0.6) and (0.5,0.6) .. (1,0.4);
    \draw (-1,-0.4) .. controls (-0.5,-0.6) and (0.5,-0.6) .. (1,-0.4);
    \draw (-0.5,0.1) .. controls (-0.5,-0.125) and (0.5,-0.125) .. (0.5,0.1);
    \draw (-0.4,0.0) .. controls (-0.4,0.0625) and (0.4,0.0625) .. (0.4,0.0);
    \draw (1,0) ellipse (0.2cm and 0.4cm);
\end{scope}
\end{tikzpicture}
}
\newcommand{\bdcomultiplication}[1][1]{
\begin{tikzpicture}[semithick, scale=#1*\bdscale, baseline=-0.5ex]
\begin{scope}
    \draw (-1,0.5) ellipse (0.2cm and 0.4cm);
    \draw (-1,-0.5) ellipse (0.2cm and 0.4cm);
    \draw (1,0) ellipse (0.2cm and 0.4cm);
    \draw (-1,0.9) .. controls (0,0.9) and (0,0.4) .. (1,0.4);
    \draw (-1,-0.9) .. controls (0,-0.9) and (0,-0.4) .. (1,-0.4);
    \draw (-1,0.1) .. controls (-0.2,0.1) and (-0.2,-0.1) .. (-1,-0.1);
\end{scope}
\end{tikzpicture}
}
\newcommand{\bdmultiplication}[1][1]{
\begin{tikzpicture}[semithick, scale=#1*\bdscale, baseline=-0.5ex]
\begin{scope}
    \draw (-1,0) ellipse (0.2cm and 0.4cm);
    \draw (1,0.5) ellipse (0.2cm and 0.4cm);
    \draw (1,-0.5) ellipse (0.2cm and 0.4cm);
    \draw (-1,0.4) .. controls (0,0.4) and (0,0.9) .. (1,0.9);
    \draw (-1,-0.4) .. controls (0,-0.4) and (0,-0.9) .. (1,-0.9);
    \draw (1,0.1) .. controls (0.2,0.1) and (0.2,-0.1) .. (1,-0.1);
\end{scope}
\end{tikzpicture}
}
\newcommand{\bdtwist}[1][1] {
\begin{tikzpicture}[semithick, scale=#1*\bdscale, baseline=-0.5ex]
\begin{scope}
    \draw (-1,0.5) ellipse (0.2cm and 0.4cm);
    \draw (-1,-0.5) ellipse (0.2cm and 0.4cm);
    \draw (1,0.5) ellipse (0.2cm and 0.4cm);
    \draw (1,-0.5) ellipse (0.2cm and 0.4cm);
    \draw (-1,0.9) .. controls (0,0.9) and (0,-0.1) .. (1,-0.1);
    \draw (-1,0.1) .. controls (0,0.1) and (0,-0.9) .. (1,-0.9);
    \draw (-1,-0.9) .. controls (0,-0.9) and (0,0.1) .. (1,0.1);
    \draw (-1,-0.1) .. controls (0,-0.1) and (0,0.9) .. (1,0.9);
\end{scope}
\end{tikzpicture}
}
\title{\Large \textbf{Arithmetic-Geometric Correspondence of Character Stacks \\ via Topological Quantum Field Theory}}
\author[$\dagger$]{\'Angel Gonz\'alez-Prieto}
\author[$\ddagger$]{Márton Hablicsek}
\author[$\star$]{Jesse Vogel}
\affil[$\dagger$]{\footnotesize Department of Algebra, Geometry and Topology, Universidad Complutense de Madrid, Plaza de Ciencias 3, Ciudad Universitaria, 28040 Madrid, Spain; and Instituto de Ciencias Matem\'aticas (CSIC-UAM-UCM-UC3), C/ Nicol\'as Cabrera 13-15, 28049 Madrid, Spain, angelgonzalezprieto@ucm.es}
\affil[$\ddagger$]{\footnotesize Department of Mathematics, Leiden University, Niels Bohrweg 1, 2333 CA Leiden, Netherlands, hablicsekhm@math.leidenuniv.nl}
\affil[$\star$]{\footnotesize Department of Mathematics, Leiden University, Niels Bohrweg 1, 2333 CA Leiden, Netherlands, j.t.vogel@math.leidenuniv.nl}
\date{}
\begin{document}

\maketitle

\begin{abstract}
    In this paper, we introduce Topological Quantum Field Theories (TQFTs) generalizing the arithmetic computations done by Hausel and Rodr\'iguez-Villegas and the geometric construction done by Logares, Muñoz, and Newstead to study cohomological invariants of $G$-representation varieties and $G$-character stacks. We show that these TQFTs are related via a natural transformation that we call the `arithmetic-geometric correspondence' generalizing the classical formula of Frobenius on the irreducible characters of a finite group. We use this correspondence to extract some information on the character table of finite groups using the geometric TQFT, and vice versa, we greatly simplify the geometric calculations in the case of upper triangular matrices by lifting its irreducible characters to the geometric setting.
\end{abstract}

\section{Introduction}

In 1896, Frobenius  \cite{frobenius1896gruppencharaktere} introduced
a beautiful formula (for $g = 1$) relating the number of group homomorphisms $\rho : \pi_1(\Sigma_g) \to G$ from the fundamental group of a closed orientable surface $\Sigma_g$ of genus $g$ into a finite group $G$ with the irreducible characters $\hat{G}$ of $G$,
\begin{equation}
\label{eq:frobenius_formula-intro}
|\Hom(\Sigma_g, G)| = |G|\sum_{\chi \in \hat{G}} \left( \frac{|G|}{\chi(1)} \right)^{2g - 2}. 
\end{equation}
More than a century later, Hausel and Rodr\'iguez-Villegas \cite{hausel2008mixed} breathed new life into this formula using a result by Katz that shows these counts can be used to compute Hodge-theoretic invariants associated to a complex variety known as the representation variety.

To be precise, given an algebraic group $G$ and a compact connected smooth manifold $M$, the collection of group homomorphisms from the fundamental group $\pi_1 (M)$ of the manifold into $G$ forms an algebraic variety induced by the algebraic variety structure on $G$,
\[ R_G(M) = \Hom(\pi_1(M), G) \]
called the $G$-representation variety of $M$. Explicitly, in the case of a closed orientable surface $\Sigma_g$ of genus $g$, the $G$-representation variety is the closed subvariety of $G^{2g}$ given by
\begin{equation}
    \label{eq:presentation_representation_variety_closed_surface}
    R_G(\Sigma_g) = \left\{ (A_1, B_1, \ldots, A_g, B_g) \in G^{2g} \;\bigg|\; \prod_{i = 1}^{g} [A_i, B_i] = 1 \right\} .
\end{equation}
Questions about the geometry of the $G$-representation variety have attracted researchers in various fields of mathematics, as the theory combines the algebraic geometry of the group $G$, the topology of $M$, and the group theory and representation theory of both $\pi_1 (M)$ and $G$ (see, for instance, \cite{cooper1994plane,cooper1998representation, culler1983varieties,dunfield2004non,munoz2016geometry}). There are several successful frameworks to study cohomological invariants of representation and character varieties, the main two representatives are the following.

\begin{itemize}
    \item \emph{The arithmetic method}: It is the method developed by Hausel and Rodr\'iguez-Villegas to relate the complex structure of the $G(\CC)$-representation variety with the point count of the $G(\FF_q)$-representation varieties over finite fields $\FF_q$. This method strongly relies on a result by Katz, inspired by the Weil conjectures, that shows that if the point count of the varieties over $\FF_q$ is a certain polynomial in $q$, then such polynomial is the so-called $E$-polynomial of the associated complex variety, a polynomial collecting alternating sums of the Hodge numbers in the spirit of an Euler characteristic. In this way, using Frobenius' formula (\ref{eq:frobenius_formula-intro}), in \cite{hausel2008mixed} the authors managed to compute the $E$-polynomials of $\GL_n(\CC)$-character varieties in purely combinatorial terms. Subsequent works have extended this method to other scenarios such as $G = \SL_r(\CC)$ \cite{mereb2015polynomials}, $G=\GL_r(\CC)$ with a generic parabolic structure \cite{mellit2020poincare}, non-orientable surfaces \cite{letellier2020series}, and connected split reductive groups \cite{bridger2022character}.

\item \emph{The geometric method}: It was started by Logares, Mu\~noz, and Newstead in the seminar paper \cite{logmunnew13}. The goal of this method is to compute cohomological invariants of the representation variety by cutting it into smaller, simpler pieces whose cohomological invariants can be easily computed. From them, they use motivic techniques to compile this local information and assemble it for the whole variety. Thanks to this approach, more subtle invariants can be computed, such as the virtual class of the representation variety in the Grothendieck ring $\K(\Var_k)$ of algebraic varieties. Its drawback is that it requires very involved calculations that can only be conducted for particular examples \cite{marmun16,mar17}.
\end{itemize}

The aim of this work is to show that these two completely unrelated methods can be actually encompassed into a common framework, by means of Topological Quantum Field Theories (TQFTs). Furthermore, we shall push Frobenius' formula further in several directions to bring new striking connections between point counting, the character theory of finite groups, and the motivic theory of complex manifolds.

The use of TQFTs to understand representation varieties has been proven to be very successful. In \cite{gonlogmun20}, the first version of a TQFT computing the virtual Hodge structure of the complex representation variety was constructed. This construction has been extended several times to virtual classes in the Grothendieck ring of varieties \cite{gon20}, for manifolds with singularities \cite{gonzalez2023character}, and for character stacks \cite{gonzalez2022virtual}. However, instead of the usual category $\Bord_n$ of $n$-dimensional bordisms, all these constructions are forced to work in the weaker category of `pointed' bordisms, with an arbitrary choice of basepoints on it.
With this method, the motivic classes of the representation varieties of surface groups were computed for $\SL_2(\CC)$ and for groups of upper triangular matrices \cite{gonlogmun20, hablicsek2022virtual, vogel2023motivic}. Notice that, with this method, in contrast to the arithmetic method, not just the $E$-polynomial of the representation variety can be computed, but much more, its class in the Grothendieck ring of varieties computing its universal cohomology theory.

It is worth mentioning that the study of the geometry of representation varieties is actually intrinsically tied to mathematical physics through the so-called {$G$-character varieties}. In the case that the group $G$ is reductive, they are defined using Geometric Invariant Theory \cite{mumford1994geometric} as the GIT quotient of the representation variety $R_G(M)$ with respect to the adjoint action of $G$ given by conjugation
$$
    \mathcal{M}_G(M) = R_G(M) \sslash G.
$$
The character variety parametrizes representations $\pi_1(M) \to G$ up to isomorphism, in contrast to the representation variety which only parametrizes raw representations. 

These character varieties are widely studied in mirror symmetry \cite{hausel2003mirror,mauri2021topological}, in string theory \cite{diaconescu2018bps}, and in the context of Yang-Mills and Chern-Simons quantum field theories, as in Witten's quantization of the Jones polynomial of knots \cite{witten1991quantum}. 
Furthermore, when $M = \Sigma_g$ is a compact orientable surface of genus $g$, character varieties are also called Betti moduli spaces, and they are one of the three main moduli spaces studied in non-Abelian Hodge Theory, alongside the Dolbeault moduli space parametrizing semistable $G$-Higgs bundles on a smooth projective curve $C$ of genus $g$, and the de Rham moduli space parametrizing flat $G$-connections on the curve $C$. Interestingly, with some assumptions, these moduli spaces are smooth varieties with canonically identified underlying differentiable manifolds via the Riemann-Hilbert, Hitchin-Kobayashi and Simpson correspondences. However, their algebraic structures and their cohomological invariants are completely different: for instance, the celebrated and recently resolved $P=W$ conjecture states the weight filtration on the cohomology of the character variety coincides with the perverse filtration on the cohomology of the Dolbeault moduli space \cite{de2022hitchin, de2012topology, hausel2022p, maulik2022p, mauri2022geometric}.

\paragraph{Arithmetic results.}

Let us discuss the results obtained in this work in the arithmetic setting first. Fix a finite group $G$, and let us focus on the point count of the $G$-representation variety for a compact manifold $M$, $|R_G(M)| = |\Hom(\pi_1(M), G)|$.

Our first result in this direction is that this point count can be quantized by means of a Topological Quantum Field Theory (TQFT). A TQFT is a symmetric monoidal functor
\[ Z: \Bord_n \to \Vect_k \]
from the category of $n$-dimensional bordisms to the category of $k$-vector spaces (or, more generally, the category $R\textup{-}\Mod$ of modules over a ring $R$). Roughly speaking, the functor $Z$ transforms closed $(n-1)$-dimensional manifolds $M$ into a vector space $Z(M)$ and bordisms $W$ between manifolds $M_1$ and $M_2$ into linear maps $Z(W): Z(M_1) \to Z(M_2)$.

An important use of these functors is to compute invariants. For instance, suppose that we are interested in a certain invariant $\chi(W)$ of closed connected $n$-dimensional manifolds $W$ with values in a field $k$. Such closed manifolds can be seen as bordisms $W: \varnothing \to \varnothing$ and since $Z(\varnothing) = k$, under a TQFT they correspond to $k$-linear maps $Z(W): k \to k$, that is, multiplication by an element $Z(W)(1) \in k$.
If we manage to construct a TQFT such that this element $Z(W)(1) = \chi(W)$ is precisely the invariant we want to study, we can use this gadget to decompose $W$ into simpler pieces, and to use the functoriality of $Z$ to re-assemble the invariant $\chi(W)$ from simpler algebraic information. In this situation, we shall say that $Z$ quantizes the invariant $\chi$. For instance, in the case of the genus $g$ surface $\Sigma_g$, we can decompose it as
\[ \Sigma_g = \bdcounit \circ \bdgenus^g \circ \bdunit \]
and hence the invariant $\chi(\Sigma_g)$ can be computed through the composition of three linear maps
\begin{equation}\label{eq:decomposition-TQFT-surface}
    k = Z(\varnothing) \xleftarrow{Z \left(\bdcounit\right)} Z(S^1) \xleftarrow{Z\left(\bdgenus\right)^g} Z(S^1) \xleftarrow{Z\left(\bdunit\right)} Z(\varnothing) = k : Z(\Sigma_g) .
\end{equation}

This is, in abstract terms, the central idea of the geometric method developed by Logares, Mu\~noz and Newstead.

Our first result in this direction is to formally construct a TQFT quantizing the point count of the $G$-representation variety.

\begin{introthm}[Proposition \ref{prop:invariant-arithmetic}]\label{thm:arithmetic-tqft-intro}
For any finite group $G$ and any $n \geq 1$, there exists a TQFT
\[ Z^\#_G : \Bord_n \to \Vect_\CC \]
such that $Z^\#_G(W)(1) = |R_G(W)|/|G|$ for any closed connected $n$-dimensional manifold $W$.
\end{introthm}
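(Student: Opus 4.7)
The plan is to build $Z_G^\#$ as a finite gauge theory in the style of Dijkgraaf--Witten: the state space assigned to a closed $(n-1)$-manifold $M$ is the vector space of $\CC$-valued functions on isomorphism classes of $G$-local systems on $M$, and the linear map assigned to a bordism is the ``finite path integral'' that sums over local systems on the bordism weighted by the reciprocals of their automorphism groups.

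Concretely, I would set $Z_G^\#(M) = \CC[R_G(M)/G]$, the free $\CC$-vector space on the finite set of conjugacy classes of representations $\pi_1(M) \to G$ (using the fundamental groupoid of $M$ in place of $\pi_1$ when $M$ is disconnected, so that monoidality on the nose is automatic); in particular $Z_G^\#(\varnothing) = \CC$. For a bordism $W : M_1 \to M_2$, the restriction maps $r_i : R_G(W) \to R_G(M_i)$ yield a $G$-equivariant span, and I would define $Z_G^\#(W)$ as the associated groupoid pull-push, i.e.\ for $\phi \in Z_G^\#(M_1)$ and $[\rho_2] \in R_G(M_2)/G$,
$$\bigl(Z_G^\#(W)\phi\bigr)([\rho_2]) = \sum_{\substack{[\rho] \in R_G(W)/G \\ [r_2(\rho)] = [\rho_2]}} \frac{\phi([r_1(\rho)])}{|\Aut(\rho)|},$$
where $\Aut(\rho)$ denotes the centralizer of $\Img(\rho)$ in $G$.

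The easy steps are monoidality $Z_G^\#(M \sqcup N) = Z_G^\#(M) \otimes Z_G^\#(N)$, which reduces to the fact that a representation of a disjoint union of fundamental groupoids is a pair of representations, and the identity condition $Z_G^\#(M \times [0,1]) = \id$, which follows from both restrictions being equivalences of finite groupoids. The main obstacle would be functoriality under gluing: for $W = W_2 \circ_{M_2} W_1$ one needs $Z_G^\#(W) = Z_G^\#(W_2) \circ Z_G^\#(W_1)$. This requires two ingredients: first, Seifert--van Kampen to identify the stack $[R_G(W)/G]$ with the homotopy $2$-pullback of $[R_G(W_i)/G]$ over $[R_G(M_2)/G]$; second, a Fubini-type theorem for groupoid cardinality ensuring that the iterated pull-push over such a pullback square agrees with the pull-push over the composite span. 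The delicate point is the bookkeeping of automorphism factors, controlled by the exact sequence of automorphism groups attached to the $2$-pullback square, which is precisely what produces the required numerical identity.

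Finally, for a closed connected $n$-manifold $W$ viewed as a bordism $\varnothing \to \varnothing$, the formula collapses to
$$Z_G^\#(W)(1) = \sum_{[\rho] \in R_G(W)/G} \frac{1}{|\Aut(\rho)|} = \frac{|R_G(W)|}{|G|},$$
the last equality being the orbit-stabilizer theorem applied to the conjugation action of $G$ on $R_G(W)$, using connectedness of $W$ to identify the stabilizer of $\rho$ with $\Aut(\rho)$.
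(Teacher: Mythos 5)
Your overall architecture is the paper's: assign to $M$ the space of iso-invariant functions on the character groupoid (equivalently on $G$-local systems), assign to a bordism the pull--push along the span of restriction maps, and reduce functoriality to Seifert--van Kampen together with a base-change/Fubini identity for groupoid cardinality; the closed-manifold count then follows from orbit--stabilizer exactly as in Propositions \ref{prop:invariant-arithmetic} and \ref{prop:count-character-groupoid}. However, your explicit push-forward formula gets the automorphism bookkeeping wrong, and this is a genuine gap rather than a detail deferred to the "delicate point". You sum over the naive fiber $\{[\rho] : [r_2(\rho)]=[\rho_2]\}$ with weight $1/|\Aut(\rho)|$, whereas the correct weight is $|\Aut(\rho_2)|/|\Aut(\rho)|$; equivalently, one must sum over the \emph{homotopy} fiber, i.e.\ over isomorphism classes of pairs $(\rho,\alpha)$ with $\alpha : r_2(\rho)\to\rho_2$, weighted by the automorphisms of the pair, which is precisely how the paper defines $f_!$ in its quantization functor $\mathcal{Q}^\#$. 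With your weights even the cylinder fails: for $W=S^1\times[0,1]$ both restrictions are equivalences, $R_G(W)/G$ is the set of conjugacy classes, and your formula gives
\[ \bigl(Z_G^\#(W)\phi\bigr)([g]) = \frac{\phi([g])}{|C_G(g)|} \ne \phi([g]) , \]
so the identity bordism is not sent to the identity and your claimed "easy step" is false as stated. The failure propagates to gluing: cutting $S^2$ into two disks, your composite yields $1/|G|^2$ while the direct evaluation gives $1/|G|$. The final closed-manifold formula is unaffected only because the target groupoid there is trivial, so the missing factor $|\Aut(\rho_2)|$ equals $1$; but the functor itself does not exist with your weights.

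The repair is exactly the paper's construction: define the push-forward over the homotopy fiber (or insert the factor $|\Aut(\rho_2)|$), after which the cylinder computation gives the identity, and the Fubini statement you defer becomes the concrete base-change lemma $h^*\circ g_! = (\pi_C)_!\circ\pi_B^*$ for the homotopy fiber product of groupoids, proved in the paper by identifying $\pi_C^{-1}(\tilde c)$ with $g^{-1}(h(\tilde c))$. Combined with the fact that $\mathfrak{X}_G(-)$ sends the Seifert--van Kampen pushout to a (2-)pullback, this yields functoriality; your use of the centralizer as $\Aut(\rho)$ is fine for connected pieces, but for disconnected bordisms you should likewise use the character groupoid (local gauge action on the fundamental groupoid) rather than a single conjugation action.
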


This TQFT has a natural interpretation in terms of characters of the group $G$ that connects directly with Frobenius' formula (\ref{eq:frobenius_formula-intro}). It is well-known \cite{kock} that $2$-dimensional TQFTs with values in $\Vect_\CC$ are in one-to-one correspondences with commutative Frobenius algebras over $\CC$. %
The correspondence is given by assigning to the TQFT $Z$ the vector space $A = Z(S^1)$, which inherits a natural commutative Frobenius algebra structure from the pair-of-pants bordisms.

In the case of the TQFT of Theorem \ref{thm:arithmetic-tqft-intro}, the associated Frobenius algebra is exactly $Z^\#_G(S^1) = R_\CC(G)$, the representation ring of $G$ generated by the characters of $G$, with convolution of class functions as product and the usual inner product of characters as bilinear form, as proven in Theorem \ref{prop:equiv-frobenius}. In this way, $Z^\#_G\left(\bdgenus[0.7]\right)$ is an endomorphism of $R_\CC(G)$ whose eigenvectors are precisely the irreducible characters $\chi \in \hat{G}$ with eigenvalues $|G|^2/\chi(1)^2$. At the light of these observations, we realize that Frobenius' formula (\ref{eq:frobenius_formula-intro}) is nothing but a direct corollary of the decomposition (\ref{eq:decomposition-TQFT-surface}).

\paragraph{Geometric results.}

Regarding the geometric techniques, our aim is to understand the geometry of the representation variety $R_G(M)$ for an algebraic group $G$ and a compact manifold $M$. A key accomplishment of this work in this direction is a definition of character stack in a functorial way. In the literature, the $G$-character stack of a manifold $M$ is usually defined as the quotient stack $[R_G(M) / G]$ of the $G$-representation variety mod out by the action of $G$ by conjugation. However, this definition has a critical problem that prevents it to generalize well to gluings: The representation variety $R_G(M) = \Hom(\pi_1(M), G)$ depends on the fundamental group of $M$, which is not a functor out of the category of topological spaces, but out of the category of \emph{pointed} topological spaces. This is the reason why all the previously studied TQFT, like \cite{gonlogmun20,gonzalez2023character,gonzalez2022virtual} needed to deal with pointed bordisms, adding a piece of extra spurious information.

To address this problem, instead of considering representations of the fundamental group of $M$, we shall consider representations $\rho: \Pi(M) \to G$ of the fundamental groupoid $\Pi(M)$ of $M$, mod out by the action of the `local gauge' group $\mathcal{G}_M = \prod_{x \in M} G$, given by $(\overline{g} \cdot \rho)(\gamma) = g_y \; \rho(\gamma) \; g_x^{-1}$ for a path $\gamma$ joining $x, y \in M$ and $\overline{g} = (g_x)_{x \in M} \in \mathcal{G}_M$. In this way, we define the $G$-character stack of $M$ as the quotient stack
$$
    \mathfrak{X}_G(M) = [\Hom(\Pi(M), G) / \mathcal{G}_M].
$$
Notice that, in principle, such quotient is not well-defined since both $\Hom(\Pi(M), G)$ and $\mathcal{G}_M$ are infinite-dimensional. However, we will show that we can provide a precise meaning to the above expression by considering finitely generated groupoids equivalent to $\Pi(M)$ (see Corollary \ref{cor:fin-gen-character-stack} and the discussion therein).

Since we are now using fundamental groupoids instead of fundamental groups, this construction inherits much better continuity properties. For instance, the functor $\mathfrak{X}_G$ sends colimits into limits. In particular, if $W \cup_M W'$ is the gluing of two compact manifolds $W$ and $W'$ along a common boundary $M$, we have $\mathfrak{X}_G(W \cup_M W') = \mathfrak{X}_G(W) \times_{\mathfrak{X}_G(M)} \mathfrak{X}_G(W')$. In this direction, the main result we obtain in this work is that we can use the continuity of the local gauge character stack to actually construct a TQFT computing the virtual class $[\mathfrak{X}_G(W)]\in \K(\Stck_k)$ in the Grothendieck ring $\K(\Stck_k)$ of algebraic stacks with affine stabilizers. To the best of our knowledge, this is the first genuine TQFT constructed in the literature computing these virtual classes in the unpointed setting.

\begin{introthm}[Theorem \ref{thm:character_stack_TQFT}]\label{thm:geometric-tqft-intro}
For any algebraic group $G$ and any $n \geq 1$, there exists a lax TQFT
$$
    Z_G: \Bord_n \to \K(\Stck_k)\textup{-}\Mod
$$
quantizing the virtual class of the $G$-character stack.
\end{introthm}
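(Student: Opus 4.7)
The plan is to assemble $Z_G$ from the functor $\mathfrak{X}_G$ via a span construction in the Grothendieck ring of stacks. For a closed $(n-1)$-manifold $M$, I would set
\[ Z_G(M) \;=\; \K(\Stck_k/\mathfrak{X}_G(M)), \]
the $\K(\Stck_k)$-module generated by isomorphism classes of morphisms of stacks with target $\mathfrak{X}_G(M)$, with module structure induced by exterior product. For a bordism $W\colon M_1 \to M_2$, the boundary inclusions $M_i \hookrightarrow W$ would, via the colimit-to-limit property of $\mathfrak{X}_G$ highlighted in the introduction, supply restriction morphisms $p_i\colon \mathfrak{X}_G(W) \to \mathfrak{X}_G(M_i)$, and I would define $Z_G(W) = (p_2)_! \circ p_1^*$ by the rule
\[ Z_G(W)\bigl([\mathcal{S} \to \mathfrak{X}_G(M_1)]\bigr) \;=\; \bigl[\mathcal{S} \times_{\mathfrak{X}_G(M_1)} \mathfrak{X}_G(W) \to \mathfrak{X}_G(M_2)\bigr]. \]

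Next I would check functoriality. A composition of bordisms $W_2 \circ W_1$ is precisely the pushout $W_2 \cup_{M_2} W_1$, and the continuity property of $\mathfrak{X}_G$ yields $\mathfrak{X}_G(W_2 \circ W_1) \simeq \mathfrak{X}_G(W_1) \times_{\mathfrak{X}_G(M_2)} \mathfrak{X}_G(W_2)$. A standard base change argument on the resulting Cartesian square should then establish $Z_G(W_2 \circ W_1) = Z_G(W_2) \circ Z_G(W_1)$, while identities are preserved because cylinders restrict to identity maps on character stacks. For the symmetric monoidal structure, the disjoint union $M \sqcup M'$ gives $\mathfrak{X}_G(M \sqcup M') \simeq \mathfrak{X}_G(M) \times \mathfrak{X}_G(M')$ since the fundamental groupoid of a disjoint union splits, and exterior product induces a comparison map $Z_G(M) \otimes_{\K(\Stck_k)} Z_G(M') \to Z_G(M \sqcup M')$; this will in general only be a homomorphism rather than an isomorphism, which is precisely the \emph{lax} aspect of the TQFT. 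The empty manifold goes to $Z_G(\varnothing) = \K(\Stck_k)$ since $\mathfrak{X}_G(\varnothing)$ is terminal.

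For the quantization assertion, a closed connected $n$-manifold $W\colon \varnothing \to \varnothing$ produces a span whose legs land on the point $\mathfrak{X}_G(\varnothing) = \Spec k$, so $Z_G(W)(1)$ reduces to $[\mathfrak{X}_G(W) \to \Spec k] \in \K(\Stck_k)$ directly from the pull-push formula. The principal obstacle I anticipate is ensuring that every object appearing in the pull-push construction actually lies in $\Stck_k$, the category of algebraic stacks with affine stabilizers, so that virtual classes are well-defined: one must verify that $\mathfrak{X}_G(W)$ is representable by such a stack for every compact manifold $W$, leveraging the finitely generated groupoid approximation of $\Pi(W)$ from Corollary \ref{cor:fin-gen-character-stack} together with algebraicity of $G$, and then confirm that the fibre products appearing along restriction morphisms $p_i$ preserve the class of algebraic stacks with affine stabilizers. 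Once this algebraicity and the independence from the chosen finitely generated presentation are settled, the base change identities used above are standard functorialities of $\K(\Stck_k)$ and the construction goes through.
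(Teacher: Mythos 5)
Your proposal follows essentially the same route as the paper: $Z_G$ is built as a span-valued field theory $M \mapsto \mathfrak{X}_G(M)$, $W \mapsto (\mathfrak{X}_G(M_1) \leftarrow \mathfrak{X}_G(W) \rightarrow \mathfrak{X}_G(M_2))$, composed with the pull-push quantization $[\mathcal{S}] \mapsto (p_2)_! p_1^*[\mathcal{S}]$ on relative Grothendieck rings, with functoriality from the colimit-to-limit property of $\mathfrak{X}_G$ (Seifert--van Kampen plus Lemma \ref{lemma:character_stack_colimits_to_limits}) and base change, lax monoidality from the non-isomorphic exterior-product comparison map, and the quantization identity from the terminal span. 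The issues you flag (affine stabilizers under fiber products, independence of the finitely generated presentation) are exactly what the paper settles via Lemma \ref{lem:prodaffinestabilizer} and Corollary \ref{cor:fin-gen-character-stack}, so the argument is sound and matches the paper's construction.
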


Notice that this TQFT is not monoidal, but only lax monoidal, that is, there exists a morphism $Z_G(M_1)\otimes Z_G(M_2) \to Z_G(M_1 \sqcup M_2)$ but this morphism is not an isomorphism. This implies in particular that $Z_G(M)$ may not be finitely generated for an object $M$ (and, in general, it is not so), and thus, the 
classification theorem of $2$-dimensional TQFTs does not apply. In this sense, the algebra $Z_G(S^1) = \K(\Stck_{[G/G]})$ should be seen as the infinite-dimensional analogue of the representation ring for general algebraic groups $G$. It is worth mentioning that the result of Theorem \ref{thm:geometric-tqft-intro} is sharp since the no-go theorem \cite[Theorem 4.21]{gonzalez2023quantization} proves that no genuinely monoidal TQFT exists for this purpose.

\paragraph{The arithmetic-geometric correspondence.}

Apart from the results obtained in this paper in the arithmetic and geometric setting, the main aim of this work is precisely to relate both constructions. The first result in this direction is that, in fact, we have a natural transformation connecting the geometry with the arithmetics.

\begin{introthm}[Corollary \ref{cor:natural_transformation_geometric_arithmetic}]
    \label{thm:cover-arithmetic-geometric}
    Let $G$ be a connected algebraic group over a finite field $\FF_q$. Then there exists a natural transformation $Z_G \Rightarrow Z^\#_{G(\FF_q)}$ as functors $\Bord_n \to \Ab$.
\end{introthm}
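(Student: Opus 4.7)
The plan is to build this natural transformation by groupoid point-counting over $\FF_q$. The key external ingredient is the ring homomorphism
\[ |\cdot|_{\FF_q} : \K(\Stck_k) \longrightarrow \CC, \qquad [\mathfrak{X}] \longmapsto \sum_{[x] \in \pi_0(\mathfrak{X}(\FF_q))} \frac{1}{|\Aut(x)|}, \]
well-defined on classes of finite-type stacks with affine stabilizers. For $G$ connected, Lang's theorem trivialises every $G$-torsor over $\FF_q$, so for any manifold $M$ the groupoid $\mathfrak{X}_G(M)(\FF_q)$ is equivalent to the action groupoid of $G(\FF_q)$ acting by conjugation on $R_{G(\FF_q)}(M) = \Hom(\pi_1(M), G(\FF_q))$; its cardinality is thus $|R_{G(\FF_q)}(M)|/|G(\FF_q)|$, matching the scalar produced by $Z^\#_{G(\FF_q)}$ in Theorem \ref{thm:arithmetic-tqft-intro}.

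I would then define each component $\eta_M : Z_G(M) \to Z^\#_{G(\FF_q)}(M)$ by applying this point-count fibrewise. Unpacking Theorem \ref{thm:geometric-tqft-intro}, a generator of $Z_G(M)$ is a class $[\mathfrak{Y} \to \mathfrak{X}_G(M)]$ of a stack over the character stack, and I set
\[ \eta_M\bigl([\mathfrak{Y} \to \mathfrak{X}_G(M)]\bigr) : \rho \longmapsto |\mathfrak{Y}_\rho(\FF_q)|, \]
a class function on the finite set $R_{G(\FF_q)}(M)/G(\FF_q)$ and therefore an element of $Z^\#_{G(\FF_q)}(M)$ via the Frobenius-algebra identification with (tensor powers of) $R_\CC(G(\FF_q))$ recalled after Theorem \ref{thm:arithmetic-tqft-intro}.

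Naturality along a bordism $W : M_1 \to M_2$ is then a formal consequence of the span construction common to both TQFTs. Both $Z_G(W)$ and $Z^\#_{G(\FF_q)}(W)$ are obtained by pull-push through the span $\mathfrak{X}_G(M_1) \leftarrow \mathfrak{X}_G(W) \to \mathfrak{X}_G(M_2)$, on the level of $\K(\Stck_k)$-modules and of finite-groupoid class functions respectively. Groupoid point-counting is compatible with pullback by base change and with pushforward by the fibrewise class equation, which yields commutativity of the naturality square. The special case $M_1 = M_2 = \varnothing$ recovers the identity $Z^\#_{G(\FF_q)}(W)(1) = |R_{G(\FF_q)}(W)|/|G(\FF_q)|$ of Theorem \ref{thm:arithmetic-tqft-intro} as the image of $[\mathfrak{X}_G(W)] \in Z_G(\varnothing)$ under $\eta_\varnothing = |\cdot|_{\FF_q}$, providing a sanity check on scalars.

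The main obstacle is the bookkeeping needed to make the construction rigorous: showing that $|\cdot|_{\FF_q}$ descends to a well-defined additive map out of the stacky Grothendieck ring $\K(\Stck_k)$ with affine stabilizers (so that $\eta_M$ does not depend on the stack presentation chosen by the construction of $Z_G$), and that the Lang-theorem identification $\mathfrak{X}_G(M)(\FF_q) \simeq R_{G(\FF_q)}(M)/\!/G(\FF_q)$ is functorial in the span of character stacks attached to a bordism. Once these points are secured, compatibility with the lax-monoidal structure follows from the multiplicativity of groupoid cardinality under products of stacks.
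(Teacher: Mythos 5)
Your proposal follows essentially the same route as the paper: you factor the transformation as a fibrewise groupoid point-count $[\mathfrak{Y} \to \mathfrak{X}_G(M)] \mapsto (x \mapsto |\mathfrak{Y}_x(\FF_q)|)$ from the geometric quantization to the arithmetic one (the paper's ``integration along the fibers'' map, whose compatibility with pull-push is exactly the base-change and fibre-counting argument you sketch), combined with Lang's theorem to identify $\mathfrak{X}_G(M)(\FF_q)$ with the character groupoid $\mathfrak{X}_{G(\FF_q)}(M)$ for connected $G$. The bookkeeping points you flag (well-definedness on the Grothendieck ring and functoriality of the Lang identification along spans) are handled in the paper in the same spirit, so your argument is correct and matches the paper's proof.
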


It is worth mentioning that the assumption of $G$ being connected is crucial here, since otherwise such natural transformation may not exist. This is related to the fact that for non-connected $G$ the $\FF_q$-points $\mathfrak{X}_G(M)(\FF_q)$ of the character stack is not the same groupoid as the action groupoid of $G(\FF_q)$ on the representation variety $R_{G(\FF_q)}(M)$, as it already happens for $G = \ZZ / 2\ZZ$. In some sense, the geometric TQFT is studying the former groupoid, whereas the arithmetic TQFT is studying the latter.

However, we can go even a step further and encompass both constructions under a general TQFT by using the so-called Landau-Ginzburg (LG) models. Roughly speaking, an LG model over a stack $\mathfrak{S}$ is a pair $(\mathfrak{X}, f : \mathfrak{X} \to k)$ of a representable $\mathfrak{S}$-stack $\mathfrak{X}$ together with a scalar function $f$ on its closed points. Using them and mimicking the construction of Theorem \ref{thm:geometric-tqft-intro}, we obtain the following result. 

\begin{introthm}[Theorem \ref{thm:tqft_LG}] For any algebraic group $G$ and any $n \geq 1$, there exists a lax monoidal TQFT
\begin{equation*}
    Z^{\LG}_G : \Bord_n \rightarrow \K(\LG_k)\textup{-}\Mod
\end{equation*}
computing the virtual class of the character stack in the Grothendieck ring $\K(\LG_k)$ of Landau-Ginzburg models over $k$.
\end{introthm}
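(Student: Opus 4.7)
The plan is to follow the template established in the proof of Theorem \ref{thm:geometric-tqft-intro}, but refine the target from $\K(\Stck_k)\textup{-}\Mod$ to $\K(\LG_k)\textup{-}\Mod$. The key point is that the construction based on the fundamental groupoid $\Pi(M)$ and the local gauge group $\mathcal{G}_M$ relies only on the fact that $\mathfrak{X}_G$ converts colimits of manifolds into limits of stacks, and this property is inherited by the LG-enhanced version once we equip each character stack with the trivial potential $f = 0$. Thus, as a first step I would define, for every compact manifold $M$, the module $Z^{\LG}_G(M)$ as the $\K(\LG_k)$-module generated by Landau--Ginzburg models $(\mathfrak{Y}, f) \to \mathfrak{X}_G(M)$ over $\mathfrak{X}_G(M)$, exactly mirroring the definition of $Z_G(M)$ in the proof of Theorem \ref{thm:geometric-tqft-intro} but internal to $\LG_k$.

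Next, for a bordism $W : M_1 \to M_2$, the inclusions $\iota_i : M_i \hookrightarrow W$ induce, by the functoriality of $\mathfrak{X}_G$ under groupoid presentations, a span
\[
    \mathfrak{X}_G(M_1) \xleftarrow{\iota_1^*} \mathfrak{X}_G(W) \xrightarrow{\iota_2^*} \mathfrak{X}_G(M_2),
\]
and I would define $Z^{\LG}_G(W)$ by pull--push of LG models along this span. Pullback of an LG model is obtained by lifting the base and precomposing the potential, while pushforward is defined on finite-type maps as in the geometric construction. Functoriality under composition of bordisms then follows from the gluing identity $\mathfrak{X}_G(W \cup_M W') = \mathfrak{X}_G(W) \times_{\mathfrak{X}_G(M)} \mathfrak{X}_G(W')$ combined with a base change formula in $\K(\LG_k)$, reducing the argument to the proof of Theorem \ref{thm:geometric-tqft-intro} one step at a time. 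The lax monoidal structure is supplied by the external sum of LG models $(\mathfrak{X}, f) \boxtimes (\mathfrak{Y}, g) = (\mathfrak{X} \times \mathfrak{Y}, f \boxplus g)$, giving a natural but generally non-invertible map $Z^{\LG}_G(M_1) \otimes Z^{\LG}_G(M_2) \to Z^{\LG}_G(M_1 \sqcup M_2)$, again in parallel with the stack case.

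The main obstacle I anticipate is verifying the base change formula for pushforwards of LG models along fiber products of character stacks. The corresponding identity for stacks is essentially formal and implicit in Theorem \ref{thm:geometric-tqft-intro}, but in the LG setting one has to check that the pulled-back potential on the fiber product agrees, modulo the defining relations of $\K(\LG_k)$, with the potential obtained by summing the individual potentials across the common boundary component. Establishing this compatibility is where the bulk of the work lies; once it is in place, the rest of the construction assembles formally from the geometric TQFT, and the resulting functor $Z^{\LG}_G$ descends to a lax monoidal TQFT refining $Z_G$ via the natural forgetful transformation $\K(\LG_k) \to \K(\Stck_k)$.
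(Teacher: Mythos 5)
Your construction is essentially the paper's: the geometric field theory $\mathcal{F}_G$ valued in $\Span(\Stck_k)$ is left untouched, and the only change is the quantization functor, which now assigns $\K(\LG_{\mathfrak{X}})$ to a stack $\mathfrak{X}$ and $g_!\circ f^*$ to a span, with lax monoidality given by the external product of LG models; the proof that the closed-manifold invariant is (a lift of) $[\mathfrak{X}_G(W)]$ is then the same one-line computation as for $Z_G$. Two remarks on the points where you deviate. First, the paper's conventions are multiplicative rather than additive: the unit of $\K(\LG_k)$ is the constant potential $1$ (not $0$), the product of potentials under fiber product is pointwise multiplication, and the inclusion $\iota:\K(\Stck_{\mathfrak{S}})\to\K(\LG_{\mathfrak{S}})$ sends $[\mathfrak{X}]$ to $[(\mathfrak{X},1)]$. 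Your choice of trivial potential $f=0$ with a Thom--Sebastiani-type $\boxplus$ still yields a lax TQFT whose composition with the forgetful map computes the virtual class, so the statement at hand survives; but it would break the intended refinement of the arithmetic TQFT, since integration along the fibers of the zero potential gives $0$ rather than the groupoid point count, which is exactly why the paper takes the constant-$1$/multiplicative convention. Second, your anticipated main obstacle is overstated: the character stacks in the field theory carry no potential at all (potentials live only on stacks \emph{over} the character stacks, i.e.\ at the quantization stage), pullback transports the potential by precomposition with the projection and pushforward leaves it unchanged, so in the base-change square for composing spans the two candidate potentials on $\mathfrak{T}\times_{\mathfrak{Y}}\mathfrak{Z}'$ literally coincide; no summation of potentials across the glued boundary occurs, and the verification reduces to the stacky base change already used for $Z_G$.
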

 
This TQFT encodes both the arithmetic and the geometric methods via natural transformations.  
For this purpose, we consider the forgetful, inclusion, and `integration along the fibers' maps, which are respectively ring morphisms
$$
\stI: \K(\LG_\mathfrak{S})\rightarrow \K(\Stck_\mathfrak{S}), \qquad \iota: \K(\Stck_\mathfrak{S}) \rightarrow \K(\LG_\mathfrak{S}) , \qquad \int: \K(\LG_\mathfrak{S}) \rightarrow \CC^{\mathfrak{S}(k)}.
$$
Here $\mathfrak{S}$ is a fixed stack and $\CC^{\mathfrak{S}(k)}$ denotes the ring of (set-theoretic) functions on the objects of the groupoid $\mathfrak{S}(k)$ which are invariant under isomorphism. The forgetful map $\stI$ just operates by sending the class $[(\mathfrak{X} \to \mathfrak{S}, f)]$ of a Landau-Ginzburg model to the virtual class $[\mathfrak{X} \to \mathfrak{S}]$ of its underlying $\mathfrak{S}$-stack, whereas the inclusion map $\iota$ sends the class of a stack $[\mathfrak{X} \to \mathfrak{S}]$ into the Landau-Ginzburg model $[(\mathfrak{X} \to \mathfrak{S}, 1_{\mathfrak{X}})]$, where $1_{\mathfrak{X}}$ denotes the constant $1$ function. Furthermore, if $k$ is a finite field, the `integration along the fibres' map $\int$ sends a Landau-Ginzburg model $[(\pi: \mathfrak{X} \to \mathfrak{S}, f)]$ to the function $\mathfrak{S}(k) \to \CC$ given by $s \mapsto \sum_{x\in \pi^{-1}(s)} f(x)$.

Using these maps, we can obtain the following result, which we call the 
\emph{arithmetic-geometric correspondence} and represents the main theorem of this paper. In the following, we take $G$ to be any algebraic group defined over a base ring $R$ (typically $R=\ZZ$) and $\FF_q$ is any finite field `extending' $R$, in the sense that there exists a ring homomorphism $R \to \FF_q$.

\begin{introthm}\label{introthm:nattrans}
For any connected algebraic group $G$, there exist natural transformations
\begin{equation*}
    \begin{tikzcd}[row sep=0.5em] 
        & Z^\LG_G\arrow[rd,shift left=2, "\stI",bend left,Rightarrow] \arrow[Rightarrow]{ld}[swap]{\int} & \\
        Z^{\#}_{G(\FF_q)}  & & Z_G\arrow[bend left, Rightarrow]{lu}[shift left]{i}
    \end{tikzcd}
\end{equation*} 
\end{introthm}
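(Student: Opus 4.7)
The plan is to lift the three ring homomorphisms $\stI$, $\iota$, $\int$ to natural transformations of TQFTs by exploiting the parallel constructions of $Z_G$, $Z^\LG_G$, and $Z^\#_{G(\FF_q)}$. Recall from Theorems \ref{thm:arithmetic-tqft-intro}, \ref{thm:geometric-tqft-intro}, and \ref{thm:tqft_LG} that each of $Z_G(M)$, $Z^\LG_G(M)$, and $Z^\#_{G(\FF_q)}(M)$ is manufactured from the character stack $\mathfrak{X}_G(M)$ via an appropriate Grothendieck ring or class-function construction, and the linear map assigned to a bordism $W: M_1 \to M_2$ is in every case expressible in terms of the pullback--pushforward span $\mathfrak{X}_G(M_1) \leftarrow \mathfrak{X}_G(W) \to \mathfrak{X}_G(M_2)$. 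Thus the problem reduces to checking that each of the three ring-level maps is compatible with these span operations and therefore assembles into a natural transformation between the TQFTs viewed as functors into $\Ab$.

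For $\stI$ and $\iota$, the verification is essentially formal. Both maps operate only on the auxiliary LG datum, either by discarding the potential function or by adjoining the trivial potential $1_\mathfrak{X}$, leaving the underlying stack untouched. Since pullback and pushforward of LG models over a stack $\mathfrak{S}$ are defined so that applying $\stI$ (resp. $\iota$) recovers the corresponding operation on the underlying $\mathfrak{S}$-stacks, the naturality square
\begin{equation*}
\begin{tikzcd}
    Z^\LG_G(M_1) \arrow[r, "Z^\LG_G(W)"] \arrow[d, "\stI"'] & Z^\LG_G(M_2) \arrow[d, "\stI"] \\
    Z_G(M_1) \arrow[r, "Z_G(W)"'] & Z_G(M_2)
\end{tikzcd}
\end{equation*}
commutes upon unwinding the definitions on each of the generators (cup, cap, identity tube, and pair of pants), and the analogous diagram for $\iota$ follows from the same argument applied to the trivial-potential embedding.

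For $\int$, we leverage Theorem \ref{thm:cover-arithmetic-geometric}, which already produces the analogous natural transformation $Z_G \Rightarrow Z^\#_{G(\FF_q)}$ at the stack level for connected $G$. The LG enhancement merely replaces the trivial potential with an arbitrary scalar function $f$ and sums the values of $f$ along the $\FF_q$-fibers. Naturality on a bordism $W: M_1 \to M_2$ then amounts to verifying that fibrewise summation commutes with both the pullback and the pushforward associated to the span $\mathfrak{X}_G(M_1) \leftarrow \mathfrak{X}_G(W) \to \mathfrak{X}_G(M_2)$: pullback compatibility is immediate from the functoriality of taking $\FF_q$-points, while pushforward compatibility is a Fubini-style identity for nested fiber sums, combined with an identification of the fiber groupoid $\mathfrak{X}_G(W)(\FF_q)$ over each $G(\FF_q)$-orbit in $\mathfrak{X}_G(M_1)(\FF_q) \times \mathfrak{X}_G(M_2)(\FF_q)$.

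The principal obstacle is precisely this groupoid identification. For general $G$, the groupoid $\mathfrak{X}_G(W)(\FF_q)$ can differ substantially from the action groupoid of $G(\FF_q)$ acting on $R_{G(\FF_q)}(W)$, as already illustrated by the $G = \ZZ/2\ZZ$ example mentioned after Theorem \ref{thm:cover-arithmetic-geometric}; this is exactly the source of the connectedness hypothesis on $G$. Assuming connectedness, an appeal to the Lang--Steinberg theorem guarantees that every Frobenius-stable $G(\overline{\FF}_q)$-conjugacy class of representations meets $R_{G(\FF_q)}(W)$ in a single $G(\FF_q)$-orbit, producing the required identification of groupoids coherently on $M_1$, $M_2$, and $W$, and hence on the entire span. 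Combining this with the formal compatibilities of the preceding paragraphs yields the three desired natural transformations and hence the triangle.
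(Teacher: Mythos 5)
Your overall architecture coincides with the paper's: $\stI$ and $\iota$ act only on the LG datum and are checked to commute with $f^*$ and $g_!$ at the quantization level (the field theory $\mathcal{F}_G$ being shared by $Z_G$ and $Z_G^{\LG}$), while $\int$ is factored as a quantization-level comparison (pullback compatibility plus a Fubini-type identity for fibers) composed with an identification, valid for connected $G$, between the $\FF_q$-points of the character stacks and the character groupoids of $G(\FF_q)$ --- this is exactly the content of Proposition \ref{prop:comparisontqft_arith}, Proposition \ref{prop:compGconn}, Theorem \ref{thm:geommet} and Section \ref{sec:LGarith}. One cosmetic slip: you verify naturality of $\stI$ and $\iota$ only on the $2$-dimensional generators (cup, cap, tube, pair of pants), but the theorem concerns $\Bord_n$ for all $n$ and these TQFTs are only lax monoidal, so no generators-and-relations reduction is available; the check should --- and easily does --- go through for an arbitrary span.

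The genuine problem is your justification of the key step where connectedness enters. The statement you attribute to Lang--Steinberg, that every Frobenius-stable $G(\overline{\FF}_q)$-conjugacy class of representations meets $R_{G(\FF_q)}(W)$ in a single $G(\FF_q)$-orbit, is false in general even for connected $G$: the rational points of an $F$-stable orbit split into $G(\FF_q)$-orbits indexed by $H^1(F,\pi_0(\mathrm{Stab}))$, and stabilizers of representations (centralizers of their images) are frequently disconnected. Already for $G=\SL_2$ and $W=S^1$ the geometric class of a regular unipotent element meets $\SL_2(\FF_q)$ in two conjugacy classes. Moreover, an orbit-level identification is not what the argument needs: the fibers appearing in $\mathcal{Q}^{\#}$ and in the integration map are groupoid fibers weighted by $1/|\Aut|$, so one must identify groupoids, not orbit sets. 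The correct use of connectedness --- and the paper's, in Proposition \ref{prop:compGconn} --- is Lang's theorem in the form that every $G$-torsor over $\Spec(\FF_q)$ is trivial, whence the groupoid $\mathfrak{X}_G(M)(\FF_q)$ of $\FF_q$-points of the quotient stack is equivalent, naturally in $M$ and in the bordism $W$, to the action groupoid $[R_{G(\FF_q)}(M)/G(\FF_q)]=\mathfrak{X}_{G(\FF_q)}(M)$; the splitting of geometric orbits into several rational ones is then harmless because automorphism groups are carried along. With this replacement your Fubini step, and hence the triangle of natural transformations, goes through as in the paper.
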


As an application of these results, we use this correspondence in two ways. First, assuming that the eigenvectors of point-counting TQFT $Z_{G(\FF_q)}^{\#}$ lift to the Landau-Ginzburg TQFT, the eigenvectors of the geometric TQFT $Z_G$ can be easily computed. We apply this idea in the case of $G$ being the group of unitriangular matrices and we significantly simplify the TQFT used in \cite{hablicsek2022virtual} computing the virtual classes of representation varieties corresponding to the group of unitriangular matrices.

Second, assuming that a universal Landau-Ginzburg TQFT exists, we derive information about the character table of the family of finite groups $G(\FF_q)$. To do so, we need to impose an extra finiteness condition. Recall that, since $Z_G^{\LG}$ is not monoidal, $Z_G^{\LG}(S^1) = \K(\LG_{[G/G]})$ is not in general finitely generated as $\K(\LG_k)$-module. Let us denote by $[\BG]_{[G/G]}$ the virtual class of the Landau-Ginzburg model $(\BG \to [G / G], 1_{\BG})$ over $[G / G]$. We shall say that $\mathcal{V} \subseteq Z_G^{\LG}(S^1)$ is a `core submodule' if it is a finitely generated $\K(\LG_k)$-module such that $[\BG]_{[G/G]} \in \mathcal{V}$ and $Z_G\left(\bdgenus[0.7]\right)(\mathcal{V}) \subseteq \mathcal{V}$.

Recall we have a natural identification of the ring of iso-invariant functions $\CC^{[G/G](\FF_q)}$ with the representation ring $R_{\CC}(G(\FF_q))$. In this way, the `integration along fibers' map provides morphisms
\[ \int_{R} : \K(\LG_{k}) \to \CC, \qquad \int_{[G/G]} : \K(\LG_{[G/G]}) \to R_{\CC}(G(\FF_q)) . \]

\begin{introthm}[Corollary \ref{cor:relation_eigenvalues_geometric_arithmetic}]
    \label{thm:applications-intro}
    Let $G$ be an algebraic group over a finitely generated $\ZZ$-algebra $R$, and let $\FF_q$ be a finite field extending $R$. Denote by $\textup{\bfseries 1}_G \in \K(\Stck_{[G/G]})$ the class of the inclusion of the identity in $G$.
    If the $\K(\Stck_k)$-module $\mathcal{V} = \langle Z_G\left(\bdgenus[0.7]\right)^g(\textup{\bfseries 1}_G) \textup{ for } g = 0, 1, 2, \ldots \rangle$ is finitely generated, then:
    \begin{enumerate}[(i)]
        \item The dimensions of the complex irreducible characters of $G(\FF_q)$ are precisely given by
        \[ d_i = \frac{|G(\FF_q)|}{\sqrt{\lambda_i}} \]
        for $\lambda_i \in \ZZ$ the eigenvalues of $\int_{R} A$, where $A$ is any matrix representing the linear map $Z_G\left(\bdgenus[0.7]\right)$ with respect to a generating set of $\mathcal{V}$.
        \item Write $\int_{R} \textup{\bfseries 1}_G = \sum_i v_i$, where $v_i$ are eigenvectors of $\int_{R} A$ with eigenvalues $\lambda_i$. Then
        \[ v_i =\frac{d_i}{|G(\FF_q)|} \sum_{\substack{\chi \in \hat{G} \textup{\ s.t.} \\ \chi(1) = d_i}} \chi \]
        for every $i$.
    \end{enumerate}
\end{introthm}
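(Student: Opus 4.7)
The strategy is to transport the problem along the arithmetic--geometric correspondence into the arithmetic TQFT $Z^{\#}_{G(\FF_q)}$, whose genus operator $Z^{\#}_{G(\FF_q)}\!\left(\bdgenus[0.7]\right)$ is completely diagonalized via Frobenius' formula. Combining the natural transformations $\iota$ and $\int$ from Theorem \ref{introthm:nattrans} produces a natural transformation $\Phi := \int \circ\, \iota : Z_G \Rightarrow Z^{\#}_{G(\FF_q)}$. Evaluated at $S^1$, this gives a $\CC$-linear map
\[
\Phi : \K(\Stck_{[G/G]}) \longrightarrow R_{\CC}(G(\FF_q))
\]
which, by naturality applied to the bordism $\bdgenus[0.7]$, intertwines $Z_G\!\left(\bdgenus[0.7]\right)$ with $Z^{\#}_{G(\FF_q)}\!\left(\bdgenus[0.7]\right)$, and which respects scalars via the ring homomorphism $\int_R \circ\, \iota : \K(\Stck_k) \to \CC$. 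A direct groupoid-cardinality count of the fibers of $\BG \to [G/G]$ (invoking Lang's theorem to trivialize $\BG(\FF_q)$ since $G$ is connected) identifies $\Phi(\textup{\bfseries 1}_G) = \delta_e$, the class function on $G(\FF_q)$ supported at the identity.

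Next, by the discussion following Theorem \ref{thm:arithmetic-tqft-intro}, the eigenvectors of $Z^{\#}_{G(\FF_q)}\!\left(\bdgenus[0.7]\right)$ are exactly the irreducible characters $\chi$, with associated eigenvalues $|G(\FF_q)|^2/\chi(1)^2$. The orthogonality relations then give
\[
\delta_e \;=\; \frac{1}{|G(\FF_q)|}\sum_{\chi \in \widehat{G(\FF_q)}} \chi(1)\,\chi \;=\; \sum_i v_i, \qquad v_i \;=\; \frac{d_i}{|G(\FF_q)|} \sum_{\substack{\chi \in \widehat{G(\FF_q)} \\ \chi(1)=d_i}} \chi,
\]
where $i$ ranges over the distinct dimensions $d_1,\ldots,d_r$ of irreducible characters of $G(\FF_q)$, and each $v_i$ is an eigenvector of eigenvalue $\lambda_i = (|G(\FF_q)|/d_i)^2$. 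Since $\mathcal{V}$ is $\K(\Stck_k)$-generated by the iterates $\{Z_G\!\left(\bdgenus[0.7]\right)^g (\textup{\bfseries 1}_G)\}_{g \geq 0}$ and $\Phi$ is scalar-compatible, its image $\Phi(\mathcal{V})$ equals the $\CC$-span of $\{Z^{\#}_{G(\FF_q)}\!\left(\bdgenus[0.7]\right)^g \delta_e\}_g = \{\sum_i \lambda_i^g v_i\}_g$, which by a Vandermonde argument on the distinct $\lambda_i$ coincides exactly with $\bigoplus_i \CC\, v_i$.

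To conclude, fix a generating set $e_1, \ldots, e_n$ of $\mathcal{V}$ and write $Z_G\!\left(\bdgenus[0.7]\right) e_j = \sum_i A_{ij}\, e_i$ with $A_{ij} \in \K(\Stck_k)$. Applying $\Phi$ yields
\[
Z^{\#}_{G(\FF_q)}\!\left(\bdgenus[0.7]\right) \Phi(e_j) \;=\; \sum_i \left(\textstyle\int_R A\right)_{ij}\, \Phi(e_i),
\]
so that $\int_R A$, regarded as an endomorphism of $\CC^n$, factors through the surjection $\CC^n \twoheadrightarrow \Phi(\mathcal{V}) = \bigoplus_i \CC\, v_i$ and its eigenvalues are precisely the $\lambda_i$. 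Solving $\lambda_i = (|G(\FF_q)|/d_i)^2$ for $d_i$ then gives assertion (i), while substituting the decomposition $\int_R \textup{\bfseries 1}_G = \delta_e = \sum_i v_i$ yields assertion (ii).

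The chief obstacle is to control spurious eigenvalues of $\int_R A$: the chosen generating set $\{e_j\}$ need not be $\CC$-linearly independent after applying $\Phi$, so the surjection $\CC^n \twoheadrightarrow \Phi(\mathcal{V})$ may have a nontrivial kernel on which $\int_R A$ could a priori act with eigenvalues outside $\{\lambda_i\}$. Handling this requires either exploiting the cyclic structure of $\mathcal{V}$ (generated from the single vector $\textup{\bfseries 1}_G$ under $Z_G\!\left(\bdgenus[0.7]\right)$) to pin down the minimal polynomial of $\int_R A$, or selecting a generating set whose $\Phi$-image is a basis of $\Phi(\mathcal{V})$. A secondary delicate point is the normalization check $\Phi(\textup{\bfseries 1}_G) = \delta_e$: the identity requires tracking the groupoid-cardinality conventions in the definitions of $\iota$ and $\int$, and in particular relies on Lang's theorem to ensure that $\BG(\FF_q)$ contributes the correct global factor of $|G(\FF_q)|$.
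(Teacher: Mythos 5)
Your argument is essentially the paper's own proof of Corollary \ref{cor:relation_eigenvalues_geometric_arithmetic}: both transport everything through the integration map $\int_{[G/G]} : \K(\Stck_{[G/G]}) \to R_\CC(G(\FF_q))$ (your $\Phi = \int \circ\, \iota$), use naturality to intertwine the two genus operators, identify $\int_{[G/G]}\textup{\bfseries 1}_G$ with the delta function at the identity $\frac{1}{|G(\FF_q)|}\sum_\chi \chi(1)\chi$, and then read off eigenvalues and eigenvectors from the Frobenius-formula diagonalization, so the image of $\mathcal{V}$ is spanned by the sums of equidimensional characters. The ``chief obstacle'' you flag (possible spurious eigenvalues of $\int_R A$ when the generating set is not sent to a basis) is likewise left implicit in the paper, whose proof simply asserts that the non-zero eigenvalues of the representing matrix are eigenvalues of the restricted endomorphism; so your proposal matches the published argument, with that caveat made explicit rather than resolved.
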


Notice that this is a remarkable result since it relates the character theory of the infinite family $G(\FF_q)$ of finite groups of Lie type with a single linear map associated to the geometry of the stack $[G/G]$. Indeed, suppose that, in the case of base field $k = \overline{\QQ}$, we manage to compute a set of eigenvectors $\mathfrak{X}_1, \ldots, \mathfrak{X}_m$ of the core submodule $\mathcal{V} \subseteq Z_G^{\LG}(S^1)$. For this purpose, since we are in the characteristic zero algebraically closed setting, we can apply `heavy' technology such as Weyl's complete reducibility theorem and Jordan forms for the elements of $G$. However, since for this computation we use only finitely many morphisms and stacks, the same calculation works over a certain finitely generated ring $R$. Therefore, the eigenvectors $\mathfrak{X}_1, \ldots, \mathfrak{X}_m$ and their eigenvalues can be used in Theorem \ref{thm:applications-intro} to provide information of the character table of $G(\FF_q)$ for any finite field $\FF_q$ extending $R$. We illustrate this strategy with a couple of examples in which we can fully determine the character table of some families of finite groups from the simpler calculation of the TQFT over a characteristic zero algebraically closed field. 

\paragraph{Structure of the document.}

The paper is organized as follows. In Section \ref{sec:representation_ring} we describe a $2$-dimensional TQFT by means of the representation ring of a finite group, and its connection to Frobenius' formula. In Section, \ref{sec:character-stack-tqft} we develop the theory of local gauge character stacks and use it to construct a TQFT computing their virtual classes. In Section \ref{sec:arithmetic_tqft} we discuss the arithmetic counterpart of the previous TQFT, and we prove it to be isomorphic to the one induced by the representation ring (Section \ref{sec:arithmetic-tqft-frobenius}) and to be covered by the geometric TQFT (Section \ref{sec:arithmetic-tqft-character-stack}) in the case of connected groups. The Landau-Ginzburg theory and its corresponding TQFT are described in Section \ref{sec:tqftLG}. We finish the paper with some applications of the arithmetic-geometric correspondence, as discussed in Section \ref{sec:app}.

\textbf{Acknowledgements.}
The first-named author thanks the hospitality of the University of Leiden, where part of this work was developed during a research stay. The second and third authors thank the hospitality of the Universidad Complutense de Madrid, where the initial stage of this work was done. The second author thanks Professor Kremnitzer for fruitful conversations about TQFTs.
The first-named author has been partially supported by Severo Ochoa excellence programme SEV-2015-0554, Spanish Ministerio de Ciencia e Innovaci\'on project PID2019-106493RB-I00, and by the Madrid Government (\textit{Comunidad de Madrid} – Spain) under the Multiannual
Agreement with the Universidad Complutense de Madrid in the line Research Incentive for
Young PhDs, in the context of the V PRICIT (Regional Programme of Research and Technological Innovation) through the project PR27/21-029.

\section{Representation ring and Frobenius algebras}
\label{sec:representation_ring}

Let $G$ be a finite group, and denote by $A = R_\CC(G)$ the representation ring of $G$, that is, the complex algebra generated by $\CC$-valued class functions on $G$. Of great importance in the representation theory of $G$ is the inner product that is defined on $A$, which we will denote by $\beta : A \otimes_\CC A \to \CC$, and it is given by
\[ \beta(a \otimes b) = \frac{1}{|G|} \sum_{g \in G} a(g) b(g^{-1}) \quad \textup{ for } a, b \in A . \]
A lesser known but equally important operation on $A$ is the convolution operation $\mu : A \otimes_\CC A \to A$ on $A$, which is given by
\[ \mu(a \otimes b)(g) = \sum_{h \in G} a(h) b(h^{-1} g) \quad \textup{ for } a, b \in A , \]
and is related to the inner product via $\beta(a \otimes b) = \mu(a \otimes b)(1)$ for $a, b \in A$. The unit $\eta : \CC \to A$ with respect to $\mu$ is given by $\eta(1) = \mathbbm{1}_1$, where $\mathbbm{1}_1$ is the characteristic function of $1$, that is, $\mathbbm{1}_1(1) = 1$ and $\mathbbm{1}_1(g) = 0$ for $g \ne 1$. We write $\eta$ for this unit, rather than $1$, to not confuse it with the constant class function $1$ which is the unit for the usual point-wise multiplication on $A$. Alternatively, $\eta$ can be expressed as
\[ \eta(1) = \frac{1}{|G|} \sum_{\chi \in \hat{G}} \chi(1) \, \chi , \]
where $\hat{G}$ denotes the set of complex irreducible characters of $G$. To complete our setup, we will also consider the map $\gamma : \CC \to A \otimes_\CC A$ given by
\[ \gamma(1) = \sum_{\chi \in \hat{G}} \chi \otimes \chi . \]
Note that $\gamma(1)$ can be seen as an inner product on the conjugacy classes of $G$, as the function $G \times G \to \CC$ is given by the cardinal
\[ \gamma(1)(g_1, g_2) = \left|\left\{ h \in G \mid h g_1 h^{-1} = g_2 \right\}\right| . \]
It turns out these operations give $R_\CC(G)$ the structure of a \textit{Frobenius algebra} over $\CC$.

\begin{definition}
    A \emph{Frobenius algebra} over a field $k$ is an algebra $A$ over $k$ equipped with a bilinear form $\beta : A \otimes_k A \to k$, which is
    \begin{itemize}
        \item associative, that is, $\beta(ab \otimes c) = \beta(a \otimes bc)$ for all $a, b, c \in A$,
        \item non-degenerate, that is, there exists a $k$-linear map $\gamma : k \to A \otimes_k A$ such that $(\beta \otimes \id_A) (a \otimes \gamma(1)) = a = (\id_A \otimes \beta)(\gamma(1) \otimes a)$ for all $a \in A$.
    \end{itemize}
\end{definition}

\begin{proposition}
    The representation ring $R_\CC(G)$ is a commutative Frobenius algebra over $\CC$ with multiplication $\mu$ and bilinear form $\beta$.
\end{proposition}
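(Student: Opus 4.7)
The plan is to verify the axioms of a commutative Frobenius algebra in three stages: first that $(R_\CC(G), \mu, \eta)$ is an associative, commutative, unital $\CC$-algebra; second that the (manifestly bilinear) form $\beta$ is associative with respect to $\mu$; and third that $\beta$ is non-degenerate, with $\gamma$ as the dual copairing.

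For the algebra structure, I would verify associativity of $\mu$ by a direct calculation: expanding $\mu(\mu(a \otimes b) \otimes c)(g)$ and $\mu(a \otimes \mu(b \otimes c))(g)$ and making the substitution $j = k^{-1}h$ in the former yields for both expressions the common double sum $\sum_{k,j \in G} a(k) b(j) c(j^{-1} k^{-1} g)$. Commutativity genuinely requires the class function property: after the substitution $k = h^{-1} g$ in the defining sum,
\[ \mu(a \otimes b)(g) = \sum_{k \in G} a(g k^{-1}) b(k) , \]
and the conjugation invariance of $a$ gives $a(g k^{-1}) = a(k^{-1}(g k^{-1}) k) = a(k^{-1} g)$, producing $\mu(b \otimes a)(g)$. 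That $\eta(1) = \mathbbm{1}_1$ is a two-sided unit for $\mu$ is immediate. The equivalent formula $\eta(1) = \frac{1}{|G|}\sum_{\chi \in \hat{G}} \chi(1)\,\chi$ is column orthogonality $\sum_\chi \chi(1) \chi(g) = |G|\, \delta_{g, 1}$ in disguise, using that $\chi(1) \in \RR$.

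The associativity of $\beta$ follows formally from the identity $\beta(x \otimes y) = \mu(x \otimes y)(1)$ combined with the already established associativity of $\mu$: both $\beta(\mu(a \otimes b) \otimes c)$ and $\beta(a \otimes \mu(b \otimes c))$ are equal to $\mu(\mu(a \otimes b) \otimes c)(1)$. For non-degeneracy, which is the substantive step, I would compute
\[ (\beta \otimes \id_A)(a \otimes \gamma(1)) = \sum_{\chi \in \hat{G}} \beta(a \otimes \chi)\, \chi , \]
and then identify $\beta(a \otimes \chi)$ with the standard Hermitian inner product $\langle a, \chi\rangle$ using the classical identity $\chi(g^{-1}) = \overline{\chi(g)}$ for complex irreducible characters. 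Orthonormality of $\hat{G}$ as a basis of $R_\CC(G)$ then gives $\sum_\chi \langle a, \chi\rangle\,\chi = a$. The mirror identity $(\id_A \otimes \beta)(\gamma(1) \otimes a) = a$ follows from the observations that $\beta$ is symmetric (via the substitution $h = g^{-1}$ in its defining sum) and that $\gamma(1)$ is symmetric under the swap of tensor factors.

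The main obstacle is conceptually mild but worth flagging: it is the step identifying $\beta(a \otimes \chi)$ with the Hermitian inner product against an irreducible character, which converts the abstract non-degeneracy condition into a concrete orthonormality statement. Everything else is a routine unwinding of definitions paired with standard character-theoretic identities.
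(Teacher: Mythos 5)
Your proposal is correct and follows essentially the same route as the paper: direct verification of associativity and commutativity of $\mu$ (using the class-function property for commutativity), associativity of $\beta$, and non-degeneracy via $(\beta \otimes \id_A)(a \otimes \gamma(1)) = \sum_{\chi \in \hat{G}} \beta(a \otimes \chi)\,\chi = a$ by orthonormality of the irreducible characters. The only (harmless) deviations are that you deduce associativity of $\beta$ formally from $\beta(x \otimes y) = \mu(x \otimes y)(1)$ rather than re-expanding the sums, and you additionally check the unit $\eta$, which the paper treats as part of the preceding setup.
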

\begin{proof}
    First note that $\mu$ is associative as
    \[ \begin{aligned} \mu(a \otimes \mu(b \otimes c))(g) &= \sum_{h_1, h_2 \in G} a(h_1) b(h_2) c(h_2^{-1} h_1^{-1} g) \\ &= \sum_{h_1, h_2 \in G} a(h_2) b(h_2^{-1} h_1) c(h_1^{-1} g) = \mu(\mu(a \otimes b) \otimes c)(g) \end{aligned} \]
    for all $a, b, c \in A$ and $g \in G$. Similarly, $\mu$ is commutative as
    \[ \begin{aligned} \mu(a \otimes b)(g) &= \sum_{h \in G} a(h) b(h^{-1}g) = \sum_{h' \in G} a(h'^{-1}g) b(g^{-1}h'g) = \sum_{h' \in G} a(h'^{-1}g) b(h') = \mu(b \otimes a),\end{aligned} \]
    for all $a, b \in A$ and $g \in G$, where in he second equality we set $h' = g h^{-1}$. Furthermore, $\beta$ is associative as
    \[ \beta(\mu(a \otimes b) \otimes c) = \frac{1}{|G|} \sum_{g, h \in G} a(h) b(h^{-1} g) c(g^{-1}) = \frac{1}{|G|} \sum_{g, h \in G} a(g) b(g^{-1} h) c(h^{-1}) = \beta(a \otimes \mu(b \otimes c)) \]
    for all $a, b, c \in A$. Finally, $\beta$ is non-degenerate as $\gamma : \CC \to A \otimes_\CC A$ satisfies
    \[ (\beta \otimes \id_A)(a \otimes \gamma(1)) = \sum_{\chi \in \hat{G}} \beta(a \otimes \chi) \chi = a \]
    by the orthogonality of irreducible characters \cite[Theorem 2.12]{fulton2013representation}, and similarly $(\id_A \otimes \beta)(\gamma(1) \otimes a) = a$, for all $a \in A$. 
\end{proof}

Frobenius algebras naturally carry even more structure: the structure of a coalgebra. In particular, there is a coassociative comultiplication $\delta : A \to A \otimes_\CC A$ with a counit $\varepsilon : A \to \CC$ which are given by
\[ \delta(a) = (\mu \otimes \id_A)(a \otimes \gamma(1)) %
\quad \textup{ and } \quad \varepsilon(a) = \beta(a \otimes 1) = \beta(1 \otimes a) . \]
The algebra and coalgebra structures are compatible in a certain sense, see \cite{kock} for details. In the case of the representation ring $A = R_\CC(G)$, one can check that the comultiplication and counit are explicitly given by
\[ \delta(a) = \sum_{\chi \in \hat{G}} \mu(a \otimes \chi) \otimes \chi \quad \textup{ and } \quad \varepsilon(a) = \frac{1}{|G|} a(1) \quad \textup{ for all } a \in A . \]

There is a surprising relation between the $G$-representation varieties of closed surfaces and the character table of $G$, given by the following proposition, which goes back to Frobenius \cite{frobenius1896gruppencharaktere}. What is remarkable, is that this equation can be written purely in terms of the operations of the Frobenius algebra structure on the representation ring of $G$.

\begin{proposition}[Frobenius' formula, \cite{frobenius1896gruppencharaktere}]
    \label{prop:frobenius_formula}
    The number of points of the $G$-representation variety of $\Sigma_g$ is given by
    \begin{equation}\label{eq:frobenius_formula} \frac{|R_G(\Sigma_g)|}{|G|} = (\varepsilon \circ (\mu \circ \delta)^g \circ \eta)(1) = \sum_{\chi \in \hat{G}} \left( \frac{|G|}{\chi(1)} \right)^{2g - 2} .
    \end{equation}
\end{proposition}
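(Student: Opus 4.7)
The statement contains two equalities, which I would establish independently. Both rely on the observation that the composite $\mu \circ \delta : A \to A$ is simply ``convolution with the handle element'' $H := \mu(\gamma(1)) = \sum_{\chi \in \hat G} \mu(\chi \otimes \chi) \in A$; this follows by unfolding the definition $\delta(a) = (\mu \otimes \id_A)(a \otimes \gamma(1))$ and using associativity of $\mu$, so that $(\mu \circ \delta)(a) = \mu(a \otimes H)$.

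\textbf{The right-hand equality.} First I would diagonalise $\mu \circ \delta$ on $R_\CC(G)$. The key computational input is the orthogonality of irreducible characters under convolution, namely $\mu(\chi \otimes \chi') = \delta_{\chi,\chi'}(|G|/\chi(1))\,\chi$, which follows from the Schur orthogonality relations for the matrix coefficients of an irreducible representation $\rho:G\to\GL_n(\CC)$. This identifies $H = \sum_{\chi \in \hat G} (|G|/\chi(1))\,\chi$ and shows that each irreducible character is an eigenvector of $\mu \circ \delta$ with eigenvalue $(|G|/\chi(1))^2$. Writing $\eta(1) = \frac{1}{|G|}\sum_{\chi}\chi(1)\,\chi$, applying $(\mu \circ \delta)^g$ and then the counit $\varepsilon(a) = a(1)/|G|$ collapses the expression to $\sum_\chi (|G|/\chi(1))^{2g-2}$ after routine cancellation of the factors of $|G|$ and $\chi(1)$.

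\textbf{The left-hand equality.} For the LHS I would identify $H$ with the class function $C(g) := |\{(a,b)\in G^2 : [a,b]=g\}|$. Since both are class functions, this reduces to comparing $\langle C, \chi\rangle$ and $\langle H, \chi\rangle$ for each irreducible $\chi$, and the former is handled by the standard averaging identity $\sum_{a \in G} \rho(a) M \rho(a)^{-1} = (|G|/\chi(1))\,\tr(M)\,I$ applied twice, yielding $\sum_{a,b\in G} \chi([a,b]) = |G|^2/\chi(1)$. Once $H = C$ is established, a short induction on $g$ from the definition of $\mu$ gives
\[ H^{*g}(g_0) = \bigl|\bigl\{(A_1,B_1,\ldots,A_g,B_g) \in G^{2g} : \textstyle\prod_{i=1}^g [A_i, B_i] = g_0\bigr\}\bigr|. \]
Since $\eta(1) = \mathbbm{1}_1$ is the unit for convolution, $(\mu \circ \delta)^g(\eta(1)) = H^{*g}$; applying $\varepsilon$ evaluates at $g_0 = 1$ and divides by $|G|$, which by (\ref{eq:presentation_representation_variety_closed_surface}) is precisely $|R_G(\Sigma_g)|/|G|$.

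\textbf{Expected obstacle.} No individual step is deep; the proof ultimately unwinds to Frobenius' original computation, now organised along the bordism decomposition $\Sigma_g = \bdcounit \circ \bdgenus^g \circ \bdunit$. The nontrivial ingredients are Schur's orthogonality (for the diagonalisation) and the centralizer-averaging identity for irreducible representations (for the commutator count); both are classical, so the main work is the bookkeeping with the Frobenius algebra operations on $R_\CC(G)$.
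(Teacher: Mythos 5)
Your proposal is correct and follows essentially the same route as the paper: your observation that $\mu\circ\delta$ is convolution with the handle element $H=(\mu\circ\delta\circ\eta)(1)$ is exactly the paper's Lemma \ref{lemma:for_frobenius_formula}, your identification of $H$ with the commutator-counting class function via pairing against irreducible characters and the Schur averaging identity is the paper's $T_A=\sum_B\rho(BAB^{-1})$ computation, and the remaining diagonalization/character bookkeeping matches what the paper dismisses as following from the definitions of $\varepsilon,\mu,\delta,\eta$.
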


In order to prove this proposition, we will make use of the following lemma.
\begin{lemma}
    \label{lemma:for_frobenius_formula}
    In any Frobenius algebra $A$, one has $\mu \circ \delta = \mu \circ (\id_A \otimes (\mu \circ \delta \circ \eta)(1))$.
\end{lemma}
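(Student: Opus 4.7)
The plan is to unfold the explicit formula $\delta(a) = (\mu \otimes \id_A)(a \otimes \gamma(1))$ given just before the lemma, and then push everything through using associativity of $\mu$ and the unit axiom for $\eta$. Concretely, the identity to be proved compares two morphisms $A \to A$, so it suffices to evaluate each at an arbitrary element $a \in A$.

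First, I would substitute the formula for $\delta$ into $\mu \circ \delta$, obtaining
\[
(\mu \circ \delta)(a) \;=\; \mu \bigl( (\mu \otimes \id_A)(a \otimes \gamma(1)) \bigr) \;=\; \bigl(\mu \circ (\mu \otimes \id_A)\bigr)(a \otimes \gamma(1)).
\]
By associativity of $\mu$, the composite $\mu \circ (\mu \otimes \id_A)$ equals $\mu \circ (\id_A \otimes \mu)$, so the right-hand side becomes $\mu\bigl(a \otimes \mu(\gamma(1))\bigr)$. This already exhibits $\mu \circ \delta$ as multiplication by the fixed element $\mu(\gamma(1)) \in A$, which is the key structural observation.

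Second, I would identify $\mu(\gamma(1))$ with $(\mu \circ \delta \circ \eta)(1)$. For this I apply the formula $\delta(b) = (\mu \otimes \id_A)(b \otimes \gamma(1))$ with $b = \eta(1)$ and use that $\eta$ is a (two-sided) unit for $\mu$, so that $\mu(\eta(1) \otimes x) = x$ for every $x \in A$. Applying this componentwise to $\gamma(1) \in A \otimes_\CC A$ gives $\delta(\eta(1)) = \gamma(1)$, hence $(\mu \circ \delta \circ \eta)(1) = \mu(\gamma(1))$. Plugging this back into the expression from the previous paragraph yields
\[
(\mu \circ \delta)(a) \;=\; \mu\bigl(a \otimes (\mu \circ \delta \circ \eta)(1)\bigr),
\]
which is exactly the claim.

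There is essentially no serious obstacle here; the only thing one needs to be careful about is bookkeeping the tensor-factor positions in $\gamma(1) = \sum_i \gamma_i' \otimes \gamma_i''$ when applying $\mu \otimes \id_A$ versus $\id_A \otimes \mu$. A minor stylistic choice is whether to phrase the argument purely diagrammatically (the "handle operator equals multiplication by the Euler/window element" identity familiar from \cite{kock}) or elementwise as above; I would favor the elementwise version since it uses only the explicit definition of $\delta$ recalled in the excerpt and does not require invoking the full Frobenius compatibility axioms.
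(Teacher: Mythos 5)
Your proof is correct and is essentially the paper's own argument run in the opposite direction: both rest on the explicit formula $\delta(a) = (\mu \otimes \id_A)(a \otimes \gamma(1))$, associativity of $\mu$, and the unit axiom for $\eta$, with your identification of $(\mu \circ \delta \circ \eta)(1) = \mu(\gamma(1))$ corresponding exactly to the paper's chain of identities read from right-hand side to left-hand side. The elementwise bookkeeping you carry out is sound, so no changes are needed.
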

\begin{proof}
    Let us start with the right-hand side. By definition of $\delta$, this is equal to
    \[ \mu \circ (\id_A \otimes (\mu \circ (\mu \otimes \id_A)(\eta(1) \otimes \gamma(1)))) . \]
    Since $\eta$ is the unit for $\mu$, this reduces to
    \[ \mu \circ (\id_A \otimes (\mu \circ \gamma(1))) . \]
    By coassociativity of $\mu$, this is equal to
    \[ \mu \circ (\mu \otimes \id_A) \circ (\id_A \otimes \gamma(1)) . \]
    Finally, by definition of $\delta$, this is equal to the left-hand side.
\end{proof}

\begin{proof}[Proof of Proposition \ref{prop:frobenius_formula}]
    Let $f : G \to \CC$ be the class function given by $f(g) = |\{ (A, B) \in G^2 \mid [A, B] = g \}|$.
    From the explicit expression of the representation variety \eqref{eq:presentation_representation_variety_closed_surface} and the definition of the convolution $a * b = \mu(a \otimes b)$ on $R_\CC(G)$, it follows immediately that the number of points of the representation variety is given by
    \[ |R_G(\Sigma_g)| = (\underbrace{f * \cdots * f}_{g \textup{ times}})(1) . \]
    Therefore, using Lemma \ref{lemma:for_frobenius_formula}, it is sufficient to show that $f$ is equal to $(\mu \circ \delta \circ \eta)(1) = \sum_{\chi \in \hat{G}} \frac{|G|}{\chi(1)} \chi$,
    or equivalently, that $\beta(f \otimes \chi) = \frac{|G|}{\chi(1)}$ for any irreducible complex character $\chi$ of $G$. Note that
    \[ \beta(f \otimes \chi) = \frac{1}{|G|} \sum_{g \in G} f(g) \chi(g^{-1}) = \frac{1}{|G|} \sum_{A, B \in G} \chi([A, B]^{-1}) = \frac{1}{|G|} \sum_{A, B \in G} \chi(B A B^{-1} A^{-1}) . \]
    Let $\rho : G \to \GL(V)$ be a representation with character $\chi$. From Schur's lemma it follows that, for any $A \in G$, the operator $T_A = \sum_{B \in G} \rho(B A B^{-1})$ is a scalar multiple of the identity, that is, $T_A = \frac{\tr(T_A)}{\chi(1)} =  \frac{|G|}{\chi(1)} \chi(A)$. Hence, it follows that
    \[ \beta(f \otimes \chi) = \frac{1}{|G|} \sum_{A, B \in G} \tr(T_A A^{-1}) = \frac{1}{|G|} \sum_{A \in G} \frac{|G|}{\chi(1)} \chi(A) \chi(A^{-1}) = \frac{|G|}{\chi(1)} . \]
    The second equality follows directly from the definition of $\varepsilon, \mu, \delta$ and $\eta$.
\end{proof}

\subsection{Topological Quantum Field Theories}
\label{sec:tqft}
While Frobenius' formula, Proposition \ref{prop:frobenius_formula}, can be beautifully expressed in terms of the operations on a Frobenius algebra, it is possible to view this formula in an even more general framework, that of \textit{Topological Quantum Field Theories}, or TQFTs for short. In this section, we shall review the main concepts and notations we will use around TQFTs. For a more thorough treatment of TQFTs, see \cite{atiyah1988topological,kock}. Let us recall the definition of an (oriented) bordism.

\begin{definition}
    Let $i : M \to \partial W$ be a smooth embedding of a closed oriented $(n - 1)$-dimensional manifold $M$ into the boundary of a compact oriented $n$-dimensional manifold $W$. Then $i$ is an \emph{in-boundary} (resp.\ \emph{out-boundary}) if for all $x \in M$, positively oriented bases $v_1, \ldots, v_{n - 1}$ for $T_x M$, and $w \in T_{i(x)} W$ pointing inwards (resp.\ outwards) compared to $W$, the basis $di_x(v_1), \ldots, di_x(v_{n - 1}), w$ for $T_{i(x)} W$ is positively oriented.
    Given two closed oriented $(n - 1)$-dimensional manifolds $M_1$ and $M_2$, a \emph{bordism} from $M_1$ to $M_2$ is a diagram
    \[ M_2 \xrightarrow{i_2} W \xleftarrow{i_1} M_1 \]
    consisting of a compact oriented manifold $W$ with boundary $\partial W$, an in-boundary $i_1 : M_1 \to \partial W$ and an out-boundary $i_2 : M_2 \to \partial W$, such that $i_1(M_1) \cap i_2(M_2) = \varnothing$ and $\partial W = i_1(M_1) \cup i_2(M_2)$.
    Two bordisms $(W, i_1, i_2)$ and $(W', i_1', i_2')$ between $M_1$ and $M_2$ are \emph{equivalent} if there exists a diffeomorphism $\alpha : W \to W'$ such that the diagram
    \[ \begin{tikzcd}[row sep=0.5em]
        & W \arrow{dd}{\alpha} & \\ M_2 \arrow{ur}{i_2} \arrow[swap]{dr}{i_2'} & & M_1 \arrow[swap]{ul}{i_1} \arrow{dl}{i_1'} \\ & W' &
    \end{tikzcd} \]
    commutes.
\end{definition}
    
\begin{definition}
    Let $n \ge 1$ be an integer. The \emph{category of $n$-dimensional bordisms}, denoted $\Bord_n$, is the category whose objects are closed oriented $(n - 1)$-dimensional manifolds (possibly empty). A morphism $W : M_1 \to M_2$ between two manifolds is an equivalence class of bordisms between $M_1$ and $M_2$. Composition in $\Bord_n$ is given by gluing along the common boundary, which can be shown to be a well-defined operation on equivalence classes of bordisms \cite{milnor}. Furthermore, the category $\Bord_n$ is endowed with a symmetric monoidal structure given by the disjoint union of both objects and bordisms.
\end{definition}

\begin{remark}
    From Section \ref{sec:field_theory_and_quantization} on, we consider $\Bord_n$ as a $2$-category rather than a $1$-category, where $2$-morphisms are given by equivalences of bordisms. However, for the purposes of this section, it suffices to consider $\Bord_n$ as the `truncated' $1$-category.
\end{remark}

Given a commutative ring $R$, denote by $R\textup{-}\Mod$ the category of $R$-modules. When $R = k$ is a field, this category will also be denoted by $\Vect_k$, the category of $k$-vector spaces. These categories are symmetric monoidal categories with monoidal structure given by tensor product over $R$.

\begin{definition}
    An \emph{$n$-dimensional Topological Quantum Field Theory (TQFT)} is a symmetric monoidal functor
    \[ Z : \Bord_n \to R\textup{-}\Mod . \]
    When $Z$ is not monoidal, but only lax monoidal, $Z$ is called a \emph{lax TQFT}.
\end{definition}

\begin{remark}
    In this context, recall that monoidality for $Z : \Bord_n \to R\textup{-}\Mod$ means that $Z(\varnothing) = R$ and we have natural isomorphisms $Z(M_1 \sqcup M_2) \cong Z(M_1) \otimes_R Z(M_2)$, for objects $M_1$ and $M_2$. Lax monoidality, on the other hand, means that $Z(\varnothing) = R$ and there exists a homomorphism $Z(M_1) \otimes_R Z(M_2) \to Z(M_1 \sqcup M_2)$, but this map may not be an isomorphism.
\end{remark}

TQFTs have an important application in algebraic topology, as they can be used to compute algebraic invariants of closed manifolds.
Suppose that we are interested in a particular $R$-valued invariant $\chi(W)$ associated to a closed $n$-dimensional manifold $W$. For instance, $\chi(W)$ might be some (co)homological information of a moduli space attached to $W$. In that case, we will say that a TQFT $Z : \Bord_n \to R\textup{-}\Mod$ \emph{quantizes} $\chi$ if, for any closed $n$-dimensional manifold $W$, we have that $Z(W)(1) = \chi(W)$. Here, we view $W$ as a bordism $W : \varnothing \to \varnothing$, so that by monoidality of the TQFT, $Z(W)$ will be a morphism $R \to R$ of $R$-modules, that is, $Z(W)$ is simply multiplication by some element of $R$.

\subsection{Frobenius TQFT}
\label{subsec:point_counting_tqft}

TQFTs have been studied in depth in the literature, see for instance \cite{atiyah1988topological,kock}. A key observation in this direction is that the fact that the functor $Z: \Bord_n \to R\textup{-}\Mod$ is monoidal implies that dualizable objects in $\Bord_n$ are sent to dualizable objects in $R\textup{-}\Mod$. In particular, since every object $M$ of $\Bord_n$ is dualizable, $Z(M)$ must be a dualizable object in $R\textup{-}\Mod$, and these are precisely the finitely generated projective modules. This suggests that TQFTs should be characterizable in terms of additional algebraic structures imposed on a finitely generated projective module.

One of the earlier results in this direction came from Dijkgraaf \cite{Dijkgraaf1989}, who observed that $2$-dimensional TQFTs are essentially the same as commutative Frobenius algebras. More precise proofs of this statement were later given by Abrams \cite{abrams1996two} (see also \cite{kock}). 

\begin{theorem}[\cite{Dijkgraaf1989,kock}]
    \label{thm:equivalence_tqft_frobenius}
    Let $k$ be a field. There is an equivalence of categories
    \[ \textbf{\textup{2-TQFTs}} \simeq \textup{\bf CommFrobAlg}_k \]
    between the category of $\Vect_k$-valued $2$-dimensional TQFTs and the category of commutative Frobenius algebras over $k$, given by sending a TQFT to its value on the circle.
\end{theorem}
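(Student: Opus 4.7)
The plan is to construct an equivalence of categories by explicitly building functors in both directions and checking they are mutually quasi-inverse. The forward functor sends a TQFT $Z$ to the vector space $A = Z(S^1)$, equipped with the algebraic structure obtained by evaluating $Z$ on the elementary bordisms. Concretely, the pair of pants $\bdmultiplication$ and $\bdcomultiplication$ yield the multiplication $\mu : A \otimes_k A \to A$ and comultiplication $\delta : A \to A \otimes_k A$, the discs $\bdunit$ and $\bdcounit$ yield the unit $\eta : k \to A$ and counit $\varepsilon : A \to k$, and the bilinear form is $\beta = \varepsilon \circ \mu$ with copairing $\gamma = \delta \circ \eta$. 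On morphisms, a monoidal natural transformation $Z \Rightarrow Z'$ restricts at $S^1$ to a $k$-linear map that automatically commutes with all these structure maps by naturality.

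First I would verify that $(A,\mu,\eta,\beta,\gamma)$ really is a commutative Frobenius algebra. Each axiom corresponds to a diffeomorphism between elementary bordisms: associativity of $\mu$ comes from the two ways of composing pairs of pants into a three-holed sphere; commutativity comes from the twist bordism $\bdtwist$ being isotopic to the identity (using orientation in dimension $2$); the unit law comes from gluing a disc to a pair of pants; the non-degeneracy of $\beta$ (equivalently, the snake identity for $\gamma$) comes from the diffeomorphism between a cylinder and the composition $\bdgamma$ followed by $\bdbeta$. Since $Z$ is a functor, it sends each such diffeomorphism equivalence to an identity of linear maps, so all Frobenius axioms hold.

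For the reverse functor, given a commutative Frobenius algebra $A$ I would define a symmetric monoidal functor $Z_A : \Bord_2 \to \Vect_k$ by $Z_A(S^1 \sqcup \cdots \sqcup S^1) = A^{\otimes n}$ and by specifying $Z_A$ on the elementary bordisms using $\mu,\eta,\delta,\varepsilon$, and the symmetry on $A\otimes A$. The crucial step, and the main obstacle, is showing this is well-defined on diffeomorphism classes of bordisms. The strategy is a \emph{generators and relations} presentation of $\Bord_2$: using Morse theory, every compact oriented surface with boundary can be cut (via a Morse function with distinct critical values) into a composition of the elementary bordisms listed above. Two such decompositions of the same bordism differ by a sequence of local moves corresponding to (i) handle slides, (ii) critical point cancellations, and (iii) rearrangements of critical values. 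I would then check that each such move becomes a Frobenius algebra identity (associativity, the Frobenius relation $(\mu \otimes \id)\circ(\id\otimes\delta) = \delta\circ\mu = (\id\otimes\mu)\circ(\delta\otimes\id)$, and the genus identity coming from the torus with one boundary), so $Z_A$ is well-defined on morphisms.

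Finally, the two constructions are inverse up to natural isomorphism: starting from $Z$, building $A = Z(S^1)$, and then reconstructing the TQFT $Z_A$ gives a TQFT that agrees with $Z$ on all generators and hence on all of $\Bord_2$; starting from $A$, the Frobenius algebra $Z_A(S^1)$ recovers $A$ with its original operations by construction. Morphisms match because Frobenius algebra homomorphisms are exactly $k$-linear maps intertwining $\mu,\eta,\delta,\varepsilon$, which is exactly the data of a monoidal natural transformation on the generating bordisms. The technical heart of the argument is the generators-and-relations presentation of $\Bord_2$, for which I would cite the classification of surfaces together with Cerf-theoretic uniqueness of Morse decompositions up to the moves listed above; the remainder of the proof is a mechanical verification.
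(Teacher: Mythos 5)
Your proposal is correct and follows essentially the same route as the sources the paper cites for this theorem (Dijkgraaf, Abrams, Kock): extract the Frobenius structure from $Z(S^1)$ via the elementary bordisms, and conversely build a TQFT from a commutative Frobenius algebra using a generators-and-relations (Morse/Cerf-theoretic) presentation of $\Bord_2$, with the relations matching the Frobenius axioms. One small imprecision: the twist bordism is not isotopic to the identity bordism; commutativity instead follows because a symmetric monoidal $Z$ sends the twist to the flip on $Z(S^1) \otimes_k Z(S^1)$, while the pair of pants precomposed with the twist is diffeomorphic rel boundary to the pair of pants itself.
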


In Section \ref{sec:representation_ring} we have seen that the representation ring $R_\CC(G)$ of a finite group $G$ carries the structure of a Frobenius algebra. Therefore, it is natural to consider the corresponding $2$-dimensional TFQT, and to ask which invariant of closed surfaces is quantized by this TQFT. Let us denote the TQFT associated to the representation ring $R_\CC(G)$, under the correspondence of Theorem \ref{thm:equivalence_tqft_frobenius}, by
\begin{equation}
    \label{eq:tqft_finite}
    Z_G^\Fr : \Bord_2 \to \Vect_\CC
\end{equation}
which we call the \emph{Frobenius TQFT}. It is explicitly given by $Z_G^\Fr(S^1) = R_\CC(G)$ and
\[ \quad Z_G^\Fr \left( \bdunit \right) = \eta, \quad Z_G^\Fr \left( \bdmultiplication \right) = \mu, \quad Z_G^\Fr \left( \bdbeta \right) = \beta, \]
\[ Z_G^\Fr \left( \bdcounit \right) = \varepsilon, \quad Z_G^\Fr \left( \bdcomultiplication \right) = \delta, \quad Z_G^\Fr \left( \bdgamma \right) = \gamma . \]
\begin{remark}
    The compatibility relations of the algebra and coalgebra structures of a Frobenius algebra translate to compatibilities on the bordisms. We refer the reader to \cite{kock}.
\end{remark}

Naturally, we are now interested in the $\CC$-valued invariant of closed surfaces that this TQFT quantizes. For any surface $\Sigma_g$ of genus $g$, we have
\[ \begin{array}{ccrcccl}\label{eq:frtqftpt}
    Z_G^\textup{\Fr}(\Sigma_g) & = & Z_G^\textup{\Fr} \Bigg( \bdcounit & \circ & \underbrace{\bdgenus \circ \cdots \circ \bdgenus}_{g \textup{ times}} & \circ & \bdunit \Bigg) \\[25pt]
        & = & Z_G^\textup{\Fr}\left(\bdcounit\right) & \circ & Z_G^\textup{\Fr}\left(\bdgenus\right)^g & \circ & Z_G^\textup{\Fr}\left(\bdunit\right) \\[15pt]
        & = & \varepsilon & \circ & (\mu \circ \delta)^g & \circ & \eta .
\end{array} \]

From Proposition \ref{prop:frobenius_formula} it now becomes apparent how to interpret the invariant quantized by this TQFT.

\begin{corollary}
    \label{cor:tqftquanfin}
    The TQFT $Z_G^\textup{\Fr}$ quantizes the number of points of the $G$-representation variety $R_G(\Sigma_g)$ divided by $|G|$. \qed
\end{corollary}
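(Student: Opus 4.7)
The statement is essentially a wrap-up corollary, so my plan is to combine two pieces that are already in place: the explicit evaluation of the Frobenius TQFT on the standard genus-$g$ decomposition, and Frobenius' formula (Proposition \ref{prop:frobenius_formula}).

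First, I would recall that any closed oriented surface $\Sigma_g$ admits the standard handle decomposition as a bordism $\varnothing \to \varnothing$,
\[
\Sigma_g \;=\; \bdcounit \;\circ\; \underbrace{\bdgenus \circ \cdots \circ \bdgenus}_{g \text{ times}} \;\circ\; \bdunit .
\]
Since $Z_G^{\Fr}$ is a symmetric monoidal functor and we have $Z_G^{\Fr}(\varnothing) = \CC$, applying the functor to this decomposition and using the explicit values of $Z_G^\Fr$ on the elementary bordisms (cap, pair of pants, cocap, copair of pants) yields the identity
\[
Z_G^{\Fr}(\Sigma_g) \;=\; \varepsilon \circ (\mu \circ \delta)^g \circ \eta \;:\; \CC \to \CC,
\]
exactly as displayed in the paragraph preceding the corollary. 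This is the purely formal, functorial content.

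Next, I would evaluate at $1 \in \CC$ and invoke Frobenius' formula (Proposition \ref{prop:frobenius_formula}), which asserts precisely that
\[
(\varepsilon \circ (\mu \circ \delta)^g \circ \eta)(1) \;=\; \frac{|R_G(\Sigma_g)|}{|G|} .
\]
Hence $Z_G^{\Fr}(\Sigma_g)(1) = |R_G(\Sigma_g)|/|G|$, which by the definition of \emph{quantization} given in Section \ref{sec:tqft} is exactly the statement that $Z_G^{\Fr}$ quantizes the invariant $\chi(\Sigma_g) = |R_G(\Sigma_g)|/|G|$.

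There is essentially no obstacle: the only nontrivial input, Frobenius' formula, has already been established, and the decomposition of $\Sigma_g$ together with the functoriality of $Z_G^{\Fr}$ does the rest. The only place where one should be slightly careful is in verifying that the identification of the elementary bordism morphisms with $\eta, \mu, \delta, \varepsilon$ under $Z_G^{\Fr}$ is compatible with the orientation conventions of $\Bord_2$ used in the decomposition of $\Sigma_g$, but this is part of the equivalence of Theorem \ref{thm:equivalence_tqft_frobenius} and requires no further argument here.
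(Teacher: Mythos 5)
Your proposal is correct and is exactly the argument the paper intends: the displayed decomposition of $\Sigma_g$ gives $Z_G^{\Fr}(\Sigma_g) = \varepsilon \circ (\mu \circ \delta)^g \circ \eta$, and Proposition \ref{prop:frobenius_formula} identifies its value at $1$ with $|R_G(\Sigma_g)|/|G|$, which is precisely why the corollary is stated with no further proof.
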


\section{Character stack TQFT}\label{sec:character-stack-tqft}

Let $G$ be an algebraic group over a field $k$. In this section, we shall construct a lax TQFT
\[ Z_G : \Bord_n \to \K(\Stck_k)\textup{-}\Mod \]
computing the virtual classes of character stacks in the Grothendieck ring of algebraic stacks over $k$. This formulation extends and improves the previous version of this functor \cite{gonzalez2022virtual} and sets a proper framework for the non-stacky versions \cite{arXiv181009714,gon20}.

\subsection{Character groupoids}

Before introducing the $G$-character stack, we will forget all geometry, and let $G$ simply be a group.

Recall that a groupoid $\Gamma$ is a category all of whose morphisms are invertible. A groupoid $\Gamma$ is \emph{finitely generated} if it has finitely many objects, all of whose automorphism groups are finitely generated. If, in addition, all automorphism groups are finite we shall say that $\Gamma$ is \emph{finite}. Furthermore, we shall say $\Gamma$ is \emph{essentially finitely generated} (resp.\ \emph{essentially finite}) if it is equivalent to a finitely generated (resp.\ finite) groupoid.

\begin{definition}
    Given a groupoid $\Gamma$, the \emph{$G$-character groupoid} of $\Gamma$ is the groupoid
    \[ \mathfrak{X}_G(\Gamma) = \textbf{Fun}(\Gamma, G) \]
    whose objects are functors $\rho : \Gamma \to G$ (where $G$ is seen as a groupoid with a single object), and morphisms $\rho_1 \to \rho_2$ are given by natural transformations $\mu : \rho_1 \Rightarrow \rho_2$.
    
    The map $\mathfrak{X}_G$ can naturally be extended to a $2$-functor $\mathfrak{X}_G : \Grpd \to \Grpd^\textup{op}$. Explicitly,
    \begin{itemize}
        \item for any functor $f : \Gamma' \to \Gamma$ between groupoids, let $\mathfrak{X}_G(f) : \mathfrak{X}_G(\Gamma) \to \mathfrak{X}_G(\Gamma')$ be the functor given by precomposition $\mathfrak{X}_G(f)(\rho) = \rho \circ f$ for any $\rho \in \mathfrak{X}_G(\Gamma)$, and $\mathfrak{X}_G(f)(\mu) = \mu f$ for any morphism $\mu : \rho_1 \to \rho_2$.
        \item for any natural transformation $\eta : f_1 \Rightarrow f_2$ between functors $f_1, f_2 : \Gamma' \to \Gamma$, let $\mathfrak{X}_G(\eta) : \mathfrak{X}_G(f_1) \Rightarrow \mathfrak{X}_G(f_2)$ be the natural transformation given by $(\mathfrak{X}_G(\eta)_\rho)_{x'} = \rho(\eta_{x'})$ for all $\rho \in \mathfrak{X}_G(\Gamma)$ and $x' \in \Gamma'$. Indeed, this defines a natural transformation as the square
        \[ \begin{tikzcd}
            \rho(f_1(x')) \arrow{r}{\rho(\eta_{x'})} \arrow[swap]{d}{\rho(f_1(\gamma'))} & \rho(f_2(x')) \arrow{d}{\rho(f_2(\gamma'))} \\
            \rho(f_1(y')) \arrow[swap]{r}{\rho(\eta_{y'})} & \rho(f_2(y'))
        \end{tikzcd} \]
        commutes for every $\gamma' : x' \to y'$ in $\Gamma'$ by naturality of $\eta$, and this is natural in $\rho$.
    \end{itemize}
    Note that $\mathfrak{X}_G$ strictly preserves the composition of $1$-morphisms and $2$-morphisms, and therefore defines a strict $2$-functor.
\end{definition}

\begin{corollary}
    \label{cor:equivalent_character_groupoids}
    Any equivalence between groupoids $\Gamma$ and $\Gamma'$ naturally induces an equivalence between the $G$-character groupoids $\mathfrak{X}_G(\Gamma)$ and $\mathfrak{X}_G(\Gamma')$. \qed
\end{corollary}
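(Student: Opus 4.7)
The plan is to observe that this is essentially a formal consequence of $\mathfrak{X}_G$ being a strict $2$-functor, together with the general fact that any strict $2$-functor preserves equivalences. So rather than constructing everything from scratch, I would package the work already done in the preceding definition.

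First, I would unpack what an equivalence of groupoids means: a pair of functors $f : \Gamma \to \Gamma'$ and $g : \Gamma' \to \Gamma$ together with natural isomorphisms $\eta : \id_\Gamma \Rightarrow g \circ f$ and $\varepsilon : f \circ g \Rightarrow \id_{\Gamma'}$. Applying $\mathfrak{X}_G$ (and remembering it lands in $\Grpd^\textup{op}$, so the direction of $1$-morphisms is reversed) produces functors $\mathfrak{X}_G(f) : \mathfrak{X}_G(\Gamma') \to \mathfrak{X}_G(\Gamma)$ and $\mathfrak{X}_G(g) : \mathfrak{X}_G(\Gamma) \to \mathfrak{X}_G(\Gamma')$, together with $2$-cells $\mathfrak{X}_G(\eta)$ and $\mathfrak{X}_G(\varepsilon)$. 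By the strict preservation of composition (recorded in the definition), these fit into
\[ \mathfrak{X}_G(\eta) : \id_{\mathfrak{X}_G(\Gamma)} = \mathfrak{X}_G(\id_\Gamma) \Rightarrow \mathfrak{X}_G(g \circ f) = \mathfrak{X}_G(f) \circ \mathfrak{X}_G(g), \]
and similarly for $\mathfrak{X}_G(\varepsilon)$, which are exactly the unit and counit shape needed for an equivalence between $\mathfrak{X}_G(\Gamma)$ and $\mathfrak{X}_G(\Gamma')$.

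The only genuine content to check is that $\mathfrak{X}_G$ sends natural isomorphisms to natural isomorphisms. For this, I would use the explicit formula from the definition: $(\mathfrak{X}_G(\eta)_\rho)_{x'} = \rho(\eta_{x'})$. Since $\Gamma$ is a groupoid, $\eta_{x'}$ is invertible in $\Gamma$ for every $x'$, and because $\rho$ is a functor, $\rho(\eta_{x'})$ is invertible in $G$ with inverse $\rho(\eta_{x'}^{-1})$. Naturality of the componentwise inverses in $\rho$ is immediate from the same diagram used to establish naturality in the definition. Hence $\mathfrak{X}_G(\eta)$ and $\mathfrak{X}_G(\varepsilon)$ are natural isomorphisms, completing the equivalence.

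I do not anticipate a main obstacle: the content is bookkeeping about variance and invertibility, both of which are essentially packaged into the observation that $\mathfrak{X}_G$ is a strict $2$-functor $\Grpd \to \Grpd^\textup{op}$ whose target is a $2$-category in which every $2$-morphism is invertible. The word \emph{naturally} in the statement refers to the fact that the construction is $2$-functorial, which has already been verified; no additional coherence check is required here.
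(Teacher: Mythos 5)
Your argument matches the paper's: the corollary is stated with no separate proof precisely because, as you observe, it is an immediate consequence of $\mathfrak{X}_G$ being a strict $2$-functor (with the explicit $2$-cell formula $(\mathfrak{X}_G(\eta)_\rho)_{x'} = \rho(\eta_{x'})$ guaranteeing that natural isomorphisms are preserved, all $2$-cells in the target being invertible anyway). Your write-up is correct and follows essentially the same route, just spelled out in more detail.
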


Now, if $G$ is a finite group and $\Gamma$ is a finitely generated groupoid, then it can easily be seen that $\mathfrak{X}_G(\Gamma)$ is a finite groupoid. Hence, Corollary \ref{cor:equivalent_character_groupoids} implies that $\mathfrak{X}_G(\Gamma)$ is essentially finite for $\Gamma$ essentially finitely generated. Hence, for $G$ a finite group, we can restrict $\mathfrak{X}_G$ to a 2-functor
\[ \mathfrak{X}_G : \FGGrpd \to \FinGrpd^\textup{op} \]
from the 2-category of essentially finitely generated groupoids to (the opposite of) the 2-category of essentially finite groupoids.

\begin{definition}
    Let $M$ be a compact manifold. The \emph{fundamental groupoid} of $M$ is the groupoid $\Pi(M)$ whose objects are the points of $M$, and morphisms $x \to y$ are given by homotopy classes of paths from $x$ to $y$.
\end{definition}

Note that if $M$ is a compact manifold, then $\Pi(M)$ is essentially finitely generated since $M$ is homotopically equivalent to a finite CW-complex \cite{whitehead1940c1}. Moreover, for any smooth map of manifolds $f : M \to N$, there is an induced functor $\Pi(f) : \Pi(M) \to \Pi(N)$. In particular, one can think of $\Pi$ as a functor $\Pi : \Mnfd_c \to \FGGrpd$ out of the category of compact smooth manifolds. Moreover, $\Pi$ can be promoted to a $2$-functor if one considers $\Mnfd_c$ as a $2$-category where $2$-morphisms are given by smooth homotopies.

\begin{definition}
    Let $M$ be a compact manifold. The \emph{$G$-character groupoid} of $M$, denoted $\mathfrak{X}_G(M)$, is defined as $\mathfrak{X}_G(\Pi(M))$. In particular, if $G$ is finite, $\mathfrak{X}_G(M)$ is essentially finite.
\end{definition}

Let us think about the groupoid $\mathfrak{X}_G(M)$ a bit more closely. An object $\rho$ can be identified with a map from the set of homotopy classes of paths on $M$ to $G$ which preserves composition.
A morphism from $\rho_1$ to $\rho_2$ is a natural transformation $\mu : \rho_1 \Rightarrow \rho_2$, which can be thought of as a function $\mu : M \to G$ such that $\rho_2(\gamma) = \mu(y) \rho_1(\gamma) \mu(x)^{-1}$ for any path $\gamma : x \to y$ in $\Pi(M)$. Such transformations are known in physics as \textit{local gauge transformations}.

With this intuition, there is an alternative way to think about the $G$-character groupoid. Defining $\mathcal{G}_\Gamma = \prod_{x \in \Gamma} G$, which we call the \emph{group of local gauge transformations}, it acts on the set $X = \Hom(\Gamma, G)$ by
\[ ((g_x)_{x \in \Gamma} \cdot \rho)(\gamma) = g_y \, \rho(\gamma) \, g_x^{-1} \]
for any $\rho \in X$ and $\gamma : x \to y$ in $\Gamma$. Now the $G$-character groupoid $\mathfrak{X}_G(\Gamma)$ is equivalent to the action groupoid $[X/\mathcal{G}_\Gamma]$. This alternative description will be the key to defining the $G$-character stacks.

\begin{definition}\label{defn:groupoid-cardinality}
    The \emph{groupoid cardinality} of an essentially finite groupoid $\mathfrak{X}$ is
    \[ |\mathfrak{X}| = \sum_{x \in [\mathfrak{X}]} \frac{1}{|\Aut(x)|} \in \QQ , \]
    where $[\mathfrak{X}]$ denotes the quotient category where all the isomorphic objects have been identified and $\Aut(x)$ is the automorphism group of an object $x$ in $[\mathfrak{X}]$ (in other words, $x\in \pi_0(\mathfrak{X})$, i.e, $x$ is an equivalence class of isomorphic objects in $\mathfrak{X}$).
\end{definition}

\begin{proposition}
    \label{prop:count-character-groupoid}
    Let $M$ be a connected compact manifold. Then we have
    \[ |\mathfrak{X}_G(M)| = \frac{|\Hom(\pi_1(M), G)|}{|G|}. \]
\end{proposition}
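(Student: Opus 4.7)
The plan is to reduce the statement to a standard orbit–stabilizer computation on an action groupoid. The key observation is that since $M$ is connected, its fundamental groupoid $\Pi(M)$ is equivalent, as a groupoid, to the fundamental group $\pi_1(M, x_0)$ regarded as a one-object groupoid (any choice of basepoint $x_0 \in M$ will do; the paths between $x_0$ and the other points of $M$ supply an explicit equivalence). By Corollary \ref{cor:equivalent_character_groupoids}, this equivalence induces an equivalence of character groupoids
\[ \mathfrak{X}_G(M) = \mathfrak{X}_G(\Pi(M)) \simeq \mathfrak{X}_G(\pi_1(M, x_0)). \]
Since groupoid cardinality (Definition \ref{defn:groupoid-cardinality}) depends only on the equivalence class, it suffices to compute $|\mathfrak{X}_G(\pi_1(M, x_0))|$.

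Next, I would invoke the alternative description of $\mathfrak{X}_G(\Gamma)$ as an action groupoid given just before Definition \ref{defn:groupoid-cardinality}. Applied to the one-object groupoid $\Gamma = \pi_1(M, x_0)$, the group of local gauge transformations is $\mathcal{G}_\Gamma = G$ (a single factor, indexed by the unique object), and the set $X = \Hom(\pi_1(M, x_0), G)$ carries the conjugation action $(g \cdot \rho)(\gamma) = g\,\rho(\gamma)\,g^{-1}$. Hence
\[ \mathfrak{X}_G(\pi_1(M, x_0)) \simeq [\Hom(\pi_1(M), G) / G], \]
where the right-hand side is the action groupoid of $G$ acting by conjugation on the (finite) set of representations.

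Finally, I would apply the well-known formula for the cardinality of an action groupoid $[X/G]$ with $G$ finite and $X$ finite: summing $1/|\Aut(\rho)|$ over isomorphism classes $[\rho]$, the automorphism group of $\rho$ in the action groupoid is exactly its stabilizer $\Stab_G(\rho)$ under the $G$-action, so by orbit–stabilizer
\[ |[X/G]| = \sum_{[\rho] \in X/G} \frac{1}{|\Stab_G(\rho)|} = \sum_{[\rho] \in X/G} \frac{|G \cdot \rho|}{|G|} = \frac{|X|}{|G|}. \]
Specializing $X = \Hom(\pi_1(M), G)$ yields the claimed equality. I expect no substantive obstacle here: the only point requiring a brief sanity check is that groupoid cardinality is invariant under equivalence of essentially finite groupoids, which follows directly from Definition \ref{defn:groupoid-cardinality} since both the set $[\mathfrak{X}]$ of isomorphism classes and the automorphism groups at each class are preserved by equivalences.
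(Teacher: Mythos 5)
Your proposal is correct and follows essentially the same route as the paper: reduce via connectedness to the one-object groupoid on $\pi_1(M)$, identify the character groupoid with the conjugation action groupoid $[\Hom(\pi_1(M),G)/G]$, and conclude by orbit--stabilizer. The extra remark on invariance of groupoid cardinality under equivalence is a fine (if implicit in the paper) sanity check.
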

\begin{proof}
    Since $M$ is connected, its fundamental groupoid is equivalent to the groupoid $\Gamma_M$ with a single object and $\pi_1(M)$ automorphisms. As shown above, in this setting we have that $\mathfrak{X}_G(M)$ is equivalent to the action groupoid $[\Hom(\Gamma_M, G) / \mathcal{G}_{\Gamma_M}]$. This is precisely a groupoid with $\Hom(\Gamma_M, G) = \Hom(\pi_1(M), G)$ objects and the action of $G$ by conjugation. The result follows then from the orbit-stabilizer formula.
\end{proof}

\subsection{Character stacks}
\label{sec:charstacks}

Let $G$ be an algebraic group over a field or, more generally, a finitely generated commutative algebra over $\ZZ$. We construct the $G$-character stack as the geometric analogue of the $G$-character groupoid.

First, we briefly discuss what we mean by a stack in the next couple of paragraphs. Following \cite{eke09}, in this paper, a stack will refer to an algebraic (Artin) stack of finite type over a field (or more generally, over a finitely generated ring over $\ZZ$) with affine stabilizers at every closed point. Similarly, we define the category of stacks, $\Stck_{\mathfrak{S}}$, over a base stack $\mathfrak{S}$ (here, $\mathfrak{S}$ is an algebraic stack, finite type over a field with affine stabilizers) as the slice category of the category of stacks over $\mathfrak{S}$.

\begin{definition}\label{def:stacks}
    The category of stacks over $\mathfrak{S}$, $\Stck_{\mathfrak{S}}$, is defined as follows.
    \begin{itemize}
    \item The objects are pairs $(\mathfrak{X}, \pi)$, where $\mathfrak{X}$ is an algebraic stack of finite type over $k$ with affine stabilizers, and $\pi \colon \mathfrak{X} \to \mathfrak{S}$ is a $1$-morphism of stacks. If the $1$-morphism $\pi$ is understood from the context, we denote the object simply by $\mathfrak{X}$.
    \item A $1$-morphism $(f, \alpha) \colon (\mathfrak{X}, \pi) \to (\mathfrak{X'}, \pi')$ consists of a $1$-morphism of stacks $f \colon \mathfrak{X} \to \mathfrak{X'}$ and a $2$-morphism of stacks $\alpha \colon \pi \Rightarrow \pi' \circ f$.
    \item A $2$-morphism $\mu \colon (f, \alpha) \Rightarrow (g, \beta)$ in $\Stck_\mathfrak{S}$ is a natural isomorphism such that $\pi'(\mu) \circ \alpha = \beta$.
\[ \begin{tikzcd}[column sep=large, row sep = huge]
    & \mathfrak{S} & \\ 
    \mathfrak{X} \arrow[dr, swap, "\pi"{name=U}] \arrow[ur, "\pi"{name=W}] \arrow[rr, swap, bend right=15, "f"{name=A}] \arrow[rr, bend left=15, "g"{name=B}]  \arrow[Rightarrow, shorten=10mm, from=W, to=rr, shift left=1.5ex, "\beta"] \arrow[Rightarrow, shorten = 1.5mm, from=A, to=B, "\mu"] & & \mathfrak{X}' \arrow[dl, "\pi'"]\arrow[ul, swap, "\pi'"] \\
    & \mathfrak{S} \arrow[Rightarrow, swap, shorten=10mm, from=U, to=ru, shift right=2ex, "\alpha"] &
\end{tikzcd} \]
\end{itemize}
\end{definition}

\begin{remark}
    Global quotient stacks of the form $[X/G]$ where $G$ is a linear algebraic group acting on a variety $X$ are examples of algebraic stacks of finite type with affine stabilizers. In fact, any reduced algebraic stack of finite type with affine stabilizers can be stratified by global quotient stacks \cite{kres99}.
\end{remark}

The remark above justifies why we would only work with stacks with affine stabilizers, namely, character stacks are stacks with affine stabilizers. We will use the following lemma about stacks with affine stabilizers several times in the paper.

\begin{lemma}\label{lem:prodaffinestabilizer}
    The category of stacks over $\mathfrak{S}$, $\Stck_{\mathfrak{S}}$, is closed under products. In fact, if $\mathfrak{X}\to \mathfrak{S}$ and $\mathfrak{Y}\to \mathfrak{S}$ are morphisms of stacks with affine stabilizers, then the fiber product $\mathfrak{X}\times_{\mathfrak{S}}\mathfrak{Y}$ also has affine stabilizers.
\end{lemma}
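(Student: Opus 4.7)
The lemma has two parts: (1) the fiber product $\mathfrak{X}\times_\mathfrak{S}\mathfrak{Y}$ is an algebraic stack of finite type, and (2) it has affine stabilizers. Part (1) is standard: the $2$-fiber product of algebraic stacks is algebraic, and finite type over the base is preserved under pullback; since $\mathfrak{X}$, $\mathfrak{Y}$, and $\mathfrak{S}$ are all of finite type, so is $\mathfrak{X}\times_\mathfrak{S}\mathfrak{Y}$. The real content of the lemma is part (2), and my plan is to exhibit the stabilizer at each closed point of $\mathfrak{X}\times_\mathfrak{S}\mathfrak{Y}$ as a fiber product of affine group schemes built from the stabilizers of $\mathfrak{X}$, $\mathfrak{Y}$, and $\mathfrak{S}$, then conclude using that fiber products of affine schemes are affine.

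Concretely, a closed point of $\mathfrak{X}\times_\mathfrak{S}\mathfrak{Y}$ is represented by a triple $(x,y,\phi)$ where $x$ and $y$ are closed points of $\mathfrak{X}$ and $\mathfrak{Y}$ respectively, and $\phi \colon \pi(x) \xrightarrow{\sim} \pi'(y)$ is an isomorphism in $\mathfrak{S}$. Unpacking the universal property of the $2$-fiber product in the spirit of Definition \ref{def:stacks}, an automorphism of $(x,y,\phi)$ is precisely a pair $(\alpha,\beta) \in \Aut(x)\times \Aut(y)$ satisfying the compatibility $\pi'(\beta) \circ \phi = \phi \circ \pi(\alpha)$. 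This identifies the stabilizer group scheme as the fiber product
\[ \Aut(x,y,\phi) \;\cong\; \Aut(x) \times_{\Aut(\pi(x))} \Aut(y), \]
where the left leg is induced by $\pi_*$, and the right leg is induced by $\pi'_*$ followed by conjugation by $\phi$ to land in $\Aut(\pi(x))$.

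By the standing hypothesis on $\Stck$, all of $\Aut(x)$, $\Aut(y)$, and $\Aut(\pi(x))$ are affine group schemes. Since fiber products of affine schemes are affine, $\Aut(x,y,\phi)$ is affine, which establishes part (2). The main obstacle I anticipate is the careful $2$-categorical bookkeeping needed to obtain the displayed formula for the stabilizer from the universal property, together with verifying that closed points of $\mathfrak{X}\times_\mathfrak{S}\mathfrak{Y}$ do project to closed points of $\mathfrak{X}$ and $\mathfrak{Y}$ so that the hypothesis on their stabilizers applies; the latter follows from the projections being of finite type.
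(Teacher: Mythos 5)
Your proof is correct and follows essentially the same route as the paper: both identify the stabilizer of a point of $\mathfrak{X}\times_{\mathfrak{S}}\mathfrak{Y}$ as the fiber product of the stabilizers of the induced points of $\mathfrak{X}$ and $\mathfrak{Y}$ over that of $\mathfrak{S}$, and conclude affineness since fiber products of affine schemes are affine. Your explicit description via pairs $(\alpha,\beta)$ compatible with $\phi$ is just a spelled-out version of the paper's Cartesian-diagram argument.
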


\begin{proof}
    Let $\star$ be a point of the fiber product $\mathfrak{X}\times_{\mathfrak{S}}\mathfrak{Y}$. The induced points on $\mathfrak{X}$, $\mathfrak{Y}$, and $\mathfrak{S}$ are also denoted by $\star$. Furthermore, let us denote the corresponding affine stabilizers by $G_{ \mathfrak{X}\times_{\mathfrak{S}}\mathfrak{Y}}$, $G_\mathfrak{X}$, $G_\mathfrak{Y}$, and $G_\mathfrak{S}$ respectively. Consider the following diagram where all squares (with horizontal and vertical arrows) are Cartesian diagrams.
    \[
    \begin{tikzcd}
        G_{\mathfrak{X}\times_{\mathfrak{S}}\mathfrak{Y}}\ar[r]\ar[d]&G_{\mathfrak{Y}}\ar[r]\ar[d]&\star\ar[dd]\ar[rd,equals]&\\
        G_{\mathfrak{X}}\ar[r]\ar[d]&G_{\mathfrak{S}}\ar[rr]\ar[dd]&& \star\ar[d]\\
        \star\ar[rd,equals]\ar[rr]&&\mathfrak{X}\times_{\mathfrak{S}}\mathfrak{Y}\ar[r]\ar[d]& \mathfrak{X}\ar[d]\\
        &\star\ar[r]&\mathfrak{Y} \ar[r]& \mathfrak{S}
    \end{tikzcd}
    \]
    This diagram shows that the stabilizer of the point $\star$ of $\mathfrak{X}\times_{\mathfrak{S}}\mathfrak{Y}$ is the fiber product of the stabilizers of corresponding points of $\mathfrak{X}$, $\mathfrak{Y}$, and $\mathfrak{S}$, implying that the stabilizer is affine.%
\end{proof}

After our brief introduction of stacks, we turn our attention to defining character stacks. First, we define $G$-representation varieties.

\begin{definition}\label{defn:representation-variety}
    Let $\Gamma$ be a finitely generated groupoid. The \emph{$G$-representation variety} of $\Gamma$ is the variety over $k$ whose functor of points is given by
    \[ R_G(\Gamma)(T) = \Hom(\Gamma, G(T)) , \]
    where the group $G(T)$ is seen as a groupoid with a single object. Functoriality on $T$ is inherited from the functoriality of $G$ and $\Hom$.
\end{definition}
Note that $R_G(\Gamma)$ is indeed representable: choosing generators $\gamma_1, \ldots, \gamma_n$ of $\Gamma$, the $G$-representation variety can be realized as a closed subvariety of $G^n$, and this variety structure can be shown to be independent of the chosen generators. %
However, $R_G(\Gamma)$ is not well-defined up to equivalence of $\Gamma$. This will be fixed once we pass to the $G$-character stack.

Similar to the previous section, we define the group of \emph{local gauge transformations} to be the algebraic group
\[ \mathcal{G}_\Gamma = \prod_{x \in \Gamma} G . \]
There is a natural action of $\mathcal{G}_\Gamma$ on $R_G(\Gamma)$, which is pointwise given by
\[ ((g_x)_{x \in \Gamma} \cdot \rho)(\gamma) = g_y \, \rho(\gamma) \, g_x^{-1} \]
for all $(g_x)_{x \in \Gamma} \in \mathcal{G}_\Gamma(T)$ and $\rho \in R_G(\Gamma)(T)$ and $\gamma : x \to y$ in $\Gamma$.

\begin{definition}
    Let $\Gamma$ be a finitely generated groupoid. The \emph{$G$-character stack} of $\Gamma$ is the quotient stack
    \[ \mathfrak{X}_G(\Gamma) = \left[ R_G(\Gamma) \,/\, \mathcal{G}_\Gamma \right] . \]
\end{definition}

\begin{remark}
    The $G$-character stack is indeed a stack with affine stabilizers (see Definition \ref{def:stacks}) as the stabilizer of a point of $\mathfrak{X}_G(\Gamma)$ is a closed subgroup of $G$. 
\end{remark}

\begin{remark}\label{rem:functorcharstack}
    It is possible to promote $\mathfrak{X}_G(-)$ to a $2$-functor $\FGGrpd \to \Stck_k^\textup{op}$. Let $f : \Gamma' \to \Gamma$ be a functor between finitely generated groupoids. Such a functor induces a morphism between the representation varieties, given by pullback
\[ f^* : R_G(\Gamma) \to R_G(\Gamma'), \quad \rho \mapsto \rho \circ f \quad \textup{ for all } \rho \in R_G(\Gamma)(T) , \]
and also a morphism of groups
\[ \mathcal{G}_f : \mathcal{G}_\Gamma \to \mathcal{G}_{\Gamma'} , \quad (g_x)_{x \in \Gamma} \mapsto (g_{f(x')})_{x' \in \Gamma'} . \]
In particular, there is an induced map on character stacks $\mathfrak{X}_G(\Gamma) \to \mathfrak{X}_G(\Gamma')$ given by sending a $\mathcal{G}_\Gamma$-torsor $P$ to the $\mathcal{G}_{\Gamma'}$-torsor $\mathcal{G}_{\Gamma'} \times_{\mathcal{G}_\Gamma} P$. This is easily seen to be functorial in $f$.

Next, let $\eta : f_1 \Rightarrow f_2$ be a natural transformation between functors $f_1, f_2 : \Gamma' \to \Gamma$. To this natural transformation, we want to assign a $2$-morphism $\mathfrak{X}_G(f_1) \Rightarrow \mathfrak{X}_G(f_2)$, which amounts to, for every $\mathcal{G}_\Gamma$-torsor $P$ over $T$ with $\mathcal{G}_\Gamma$-equivariant map $\rho : P \to R_G(\Gamma)$, a morphism of $\mathcal{G}_{\Gamma'}$-torsors (as indicated by the dashed arrow) such that the diagram
\[ \begin{tikzcd}[row sep=1em, column sep=6em]
    \mathcal{G}_{\Gamma'} \times_{\mathcal{G}_\Gamma} P \arrow[dashed]{dd} \arrow[bend left=10]{dr}{(g', p) \mapsto g' \cdot f_1^*(\rho(p))} & \\
     & R_G(\Gamma') \\
    \mathcal{G}_{\Gamma'} \times_{\mathcal{G}_\Gamma} P \arrow[swap, bend right=10]{ur}{(g', p) \mapsto g' \cdot f_2^*(\rho(p))} &
\end{tikzcd} \]
commutes. Analogous to the case for $G$-character groupoids, this morphism is given by $(g', p) \mapsto (g' \rho(p)(\eta_{x'}), p)$. Note that this map is well-defined (that is, respects the $\mathcal{G}_\Gamma$-action on both sides) by unfolding the definitions.
\end{remark}

\begin{corollary}\label{cor:fin-gen-character-stack}
    Any equivalence between finitely generated groupoids $\Gamma$ and $\Gamma'$ naturally induces an isomorphism between the $G$-character stacks $\mathfrak{X}_G(\Gamma)$ and $\mathfrak{X}_G(\Gamma')$. \qed
\end{corollary}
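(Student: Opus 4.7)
The plan is to derive the corollary as a direct consequence of the $2$-functoriality of $\mathfrak{X}_G$ set up in Remark \ref{rem:functorcharstack}, by invoking the general principle that any strict $2$-functor sends equivalences to equivalences. An equivalence of finitely generated groupoids comes packaged as a quadruple $(f, g, \eta, \epsilon)$ consisting of $1$-morphisms $f : \Gamma \to \Gamma'$ and $g : \Gamma' \to \Gamma$ together with invertible natural transformations $\eta : \id_\Gamma \Rightarrow g \circ f$ and $\epsilon : f \circ g \Rightarrow \id_{\Gamma'}$ (every natural transformation between functors of groupoids is automatically invertible). Applying the $2$-functor $\mathfrak{X}_G$ then produces $1$-morphisms $\mathfrak{X}_G(f) : \mathfrak{X}_G(\Gamma') \to \mathfrak{X}_G(\Gamma)$ and $\mathfrak{X}_G(g) : \mathfrak{X}_G(\Gamma) \to \mathfrak{X}_G(\Gamma')$ together with invertible $2$-morphisms $\mathfrak{X}_G(\eta)$ and $\mathfrak{X}_G(\epsilon)$ exhibiting them as mutual quasi-inverses. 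This is precisely the data of an equivalence, i.e., an isomorphism in $\Stck_k$ in the sense of Definition \ref{def:stacks}, and naturality in the equivalence $(f, g, \eta, \epsilon)$ is built in to $2$-functoriality.

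The real content therefore sits in Remark \ref{rem:functorcharstack}, which I would spell out in three steps. First, $f \mapsto \mathfrak{X}_G(f)$ is well defined on quotient stacks because the pair $(f^*, \mathcal{G}_f)$ is equivariant: $f^*((g_x)_{x \in \Gamma} \cdot \rho) = \mathcal{G}_f((g_x)_{x \in \Gamma}) \cdot f^*(\rho)$, immediate from the definitions. Second, strict compatibility with composition and identities, namely $\mathfrak{X}_G(f_1 \circ f_2) = \mathfrak{X}_G(f_2) \circ \mathfrak{X}_G(f_1)$ and $\mathfrak{X}_G(\id_\Gamma) = \id_{\mathfrak{X}_G(\Gamma)}$, inherited from the analogous strict functoriality of $f \mapsto f^*$ and $f \mapsto \mathcal{G}_f$. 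Third, compatibility of $\eta \mapsto \mathfrak{X}_G(\eta)$, defined via the explicit formula $(g', p) \mapsto (g' \cdot \rho(p)(\eta_{x'}), p)$, with vertical and horizontal composition of natural transformations; this reduces to the elementary identity $\rho(p)(\eta \circ \mu) = \rho(p)(\eta) \cdot \rho(p)(\mu)$ in $G(T)$.

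The only non-formal point, and what I expect to be the main obstacle to a fully detailed write-up, is to verify that an invertible natural transformation $\eta$ is sent to an invertible $2$-morphism $\mathfrak{X}_G(\eta)$ of stacks. Here the hypothesis that $\Gamma$ is a groupoid is essential: the inverse of $\mathfrak{X}_G(\eta)$ is obtained by substituting $\eta_{x'}^{-1}$ for $\eta_{x'}$ in the defining formula, which is well defined precisely because $\eta_{x'}$ is an isomorphism in $\Gamma$ and hence $\rho(p)(\eta_{x'})$ is invertible in $G(T)$. Once this is checked, the pair $(\mathfrak{X}_G(f), \mathfrak{X}_G(g))$ together with the invertible $2$-morphisms $(\mathfrak{X}_G(\eta), \mathfrak{X}_G(\epsilon))$ exhibits the desired isomorphism of $G$-character stacks, completing the proof.
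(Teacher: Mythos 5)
Your proposal is correct and follows exactly the route the paper intends: the corollary is stated with no separate proof because it is the formal consequence of the $2$-functoriality of $\mathfrak{X}_G$ established in Remark \ref{rem:functorcharstack}, together with the general fact that $2$-functors carry equivalences to equivalences. Your additional verifications (equivariance of $(f^*,\mathcal{G}_f)$, strictness, invertibility of $\mathfrak{X}_G(\eta)$ via $\rho(p)(\eta_{x'})^{-1}$) are exactly the details the paper delegates to that remark, so the approach matches.
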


This observation allows us to extend the definition of the $G$-character stack to groupoids $\Gamma$ which are only essentially finitely generated. In particular, this allows us to define the $G$-character stack of a compact manifold.

\begin{definition}
    Let $M$ be a compact manifold. The \emph{$G$-character stack} of $M$ is defined as
    \[\mathfrak{X}_G(M) = \mathfrak{X}_G(\Gamma) \]
    where $\Gamma$ is any finitely generated groupoid equivalent to the fundamental groupoid $\Pi(M)$ of $M$. By the above corollary, this definition is, up to isomorphism, independent of the choice of $\Gamma$. However, to be exact, one needs to make a choice of $\Gamma$ for every $M$.
\end{definition}

\begin{remark}
    There is a crucial difference between this definition and the one considered in \cite{gonzalez2022virtual}. In the latter, only an action of $G$ on $R_G(\Gamma)$ is considered, given by $(g \cdot \rho) (\gamma) = g^{-1} \rho(\gamma) g$. This can be seen as a ``global'' gauge action, where the coordinate system is changed at the same time at all the vertices. In sharp contrast, in this work, we will consider a \textit{local} gauge action, where we allow to change the gauge differently at each of the vertices of the groupoid.
\end{remark}

\begin{remark}\label{rmk:comparison-stack-groupoid}
    Note that the $G$-character stack could not be described via the functor of points via the $G$-character groupoids since
    \begin{equation}\label{eq:chargroupoidnotfunctor}
        \mathfrak{X}_G(\Gamma)(T) \ne \left[ R_G(\Gamma)(T) \Big/ \mathcal{G}_\Gamma(T) \right] .
    \end{equation}
    For example, let $G = \ZZ / 2\ZZ$ and let $\Gamma$ be the trivial groupoid. Then $\mathfrak{X}_G(\Gamma) = \BG$. However, over a finite field $k$, we have that $\pi_0(\BG(\Spec k))=2$, since there are two principal $\ZZ/2\ZZ$-bundles over $\Spec k$: the trivial bundle and the one corresponding to the degree 2 extension of $k$. On the other hand, $|R_G(\Gamma)(\Spec k)|=1$, showing that Eq, \ref{eq:chargroupoidnotfunctor} does not hold in general.
\end{remark}

\begin{lemma}
    \label{lemma:character_stack_colimits_to_limits}
    $\mathfrak{X}_G(-)$ sends finite colimits in $\FGGrpd$ to finite limits in $\Stck_k$.
\end{lemma}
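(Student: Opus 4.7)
My plan is to verify the equivalence at the level of $T$-points for every test scheme $T$, and then invoke the $2$-Yoneda lemma. The key observation is the functor-of-points identification
\[ \mathfrak{X}_G(\Gamma)(T) \;\simeq\; \textbf{Fun}(\Gamma, BG(T)), \]
natural in a finitely generated groupoid $\Gamma$ and a test scheme $T$, where $BG(T)$ is the groupoid of $G$-torsors on $T$. Starting from the quotient description $\mathfrak{X}_G(\Gamma) = [R_G(\Gamma)/\mathcal{G}_\Gamma]$, a $T$-point is a $\mathcal{G}_\Gamma$-torsor $P \to T$ together with a $\mathcal{G}_\Gamma$-equivariant morphism $P \to R_G(\Gamma)$. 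Using $\mathcal{G}_\Gamma = \prod_{x \in \mathrm{Ob}(\Gamma)} G$ (a finite product, since $\Gamma$ has finitely many objects), such a torsor decomposes as a tuple $(P_x)_{x \in \mathrm{Ob}(\Gamma)}$ of $G$-torsors on $T$; and the equivariant map assembles, for each arrow $\gamma: x \to y$ in $\Gamma$, into a $G$-equivariant isomorphism $P_x \to P_y$, compatibly with composition. This is precisely the data of a functor $\Gamma \to BG(T)$.

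With this identification in hand, the remainder of the argument is formal. For any target groupoid $\mathcal{C}$, the contravariant $2$-functor $\textbf{Fun}(-, \mathcal{C}): \FGGrpd^{\mathrm{op}} \to \Grpd$ sends $2$-colimits to $2$-limits (a general property of representable Homs in $2$-categories). Applying this pointwise with $\mathcal{C} = BG(T)$ yields a natural equivalence
\[ \mathfrak{X}_G(\mathrm{colim}_i\, \Gamma_i)(T) \;\simeq\; \lim_i\, \mathfrak{X}_G(\Gamma_i)(T) \]
of groupoids, natural in $T$. By the $2$-Yoneda lemma, this upgrades to an equivalence $\mathfrak{X}_G(\mathrm{colim}_i\, \Gamma_i) \cong \lim_i \mathfrak{X}_G(\Gamma_i)$ as sheaves of groupoids on schemes.

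To conclude, one checks that the limit on the right genuinely lies in $\Stck_k$. Any finite limit of stacks can be assembled from the terminal object, finite products, and fiber products: the terminal object of $\Stck_k$ is $\Spec k$ (which coincides with $\mathfrak{X}_G(\varnothing)$), and fiber products of algebraic stacks of finite type with affine stabilizers remain in $\Stck_k$ by Lemma \ref{lem:prodaffinestabilizer}. Algebraicity and finite type are preserved because each $\mathfrak{X}_G(\Gamma_i)$ is the quotient of a closed subscheme of $G^n$ by an affine algebraic group.

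The main obstacle, in my view, is the careful verification of the functor-of-points identification in the first paragraph: one must unwind the product structure of $\mathcal{G}_\Gamma$ to recognize a $\mathcal{G}_\Gamma$-torsor as a collection of $G$-torsors indexed by the objects of $\Gamma$, and to recognize the compatibility with the equivariant map into $R_G(\Gamma)$ as exactly the coherence data of a functor into $BG(T)$. Everything else is either $2$-categorical abstract nonsense or a direct application of Lemma \ref{lem:prodaffinestabilizer}.
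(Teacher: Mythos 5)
Your argument is correct, but it takes a genuinely different route from the paper's. The paper works directly with the quotient-stack presentation: it unwinds the $T$-points of $\mathfrak{X}_G(\operatorname{colim}_i \Gamma_i)$ and of $\lim_i \mathfrak{X}_G(\Gamma_i)$, uses that $R_G(-)(T) = \Hom(-, G(T))$ turns the colimit of groupoids into a limit of Hom-sets, and then passes between a single $\mathcal{G}_\Gamma$-torsor and a compatible family of $\mathcal{G}_{\Gamma_i}$-torsors by induction of torsors ($P_i = \mathcal{G}_{\Gamma_i} \times_{\mathcal{G}_\Gamma} P$, and conversely $P = \mathcal{G}_\Gamma \times_{\left(\lim_i \mathcal{G}_{\Gamma_i}\right)} \lim_i P_i$). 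You instead prove the intrinsic identification $\mathfrak{X}_G(\Gamma)(T) \simeq \textbf{Fun}(\Gamma, \BG(T))$ and then invoke the formal fact that a representable $2$-functor $\textbf{Fun}(-,\mathcal{C})$ turns colimits into limits, together with $2$-Yoneda. Your route buys conceptual clarity: all torsor bookkeeping is concentrated in one reusable statement (the character stack is the mapping stack into $\BG$), continuity becomes formal, and the identification gives another proof of independence of the presentation of $\Gamma$; the paper's route stays closer to the definitions and uses only the strict universal property of the colimit, which is exactly what Seifert--van Kampen supplies downstream. Two points deserve care in your write-up: first, your appeal to cocontinuity of $\textbf{Fun}(-,\mathcal{C})$ is a statement about bicolimits, whereas the paper's computation $R_G(\operatorname{colim}_i\Gamma_i)(T)=\lim_i\Hom(\Gamma_i,G(T))$ uses strict colimits, so you should say explicitly which notion of ``finite colimit in $\FGGrpd$'' you take (in the intended applications, pushouts coming from Seifert--van Kampen, the two agree); second, the identification in your first paragraph carries the real content of the lemma -- the decomposition of a $\mathcal{G}_\Gamma=\prod_x G$-torsor into $G$-torsors, the translation of the equivariant map to $R_G(\Gamma)$ into composition-compatible torsor isomorphisms (including the morphisms of the two groupoids), and naturality in $\Gamma$ so that the induced maps of Remark \ref{rem:functorcharstack} become precomposition -- and a complete proof should spell this out at roughly the level of detail the paper devotes to its torsor manipulations; your final verification that the limit stays in $\Stck_k$ via Lemma \ref{lem:prodaffinestabilizer} is fine, though strictly redundant since the limit is equivalent to $\mathfrak{X}_G(\operatorname{colim}_i\Gamma_i)$, which is a quotient stack in $\Stck_k$ by construction.
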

\begin{proof}
    Since colimits are defined up to equivalence, without loss of generality, we can suppose that we are working in $\FGGrpd$. Let $\Gamma = \operatorname{colim}\limits_{i \in I} \Gamma_i$ be a colimit in $\FGGrpd$. A $T$-point of $\lim_{i \in I} \mathfrak{X}_G(\Gamma_i)$ is a collection of $\mathcal{G}_{\Gamma_i}$-torsors $P_i$ over $T$ with $\mathcal{G}_{\Gamma_i}$-equivariant morphisms $\rho_i : P_i \to R_G(\Gamma_i)(T)$, which are compatible in the sense that there are natural isomorphisms $\mathcal{G}_{\Gamma_i} \times_{\mathcal{G}_{\Gamma_j}} P_j \cong P_i$ in the groupoid $\mathfrak{X}_G(\Gamma_i)(T)$
    for every $i \to j$ in $I$. On the other hand, a $T$-point of $\mathfrak{X}_G(\Gamma)$ is a $\mathcal{G}_\Gamma$-torsor $P$ over $T$ with a $\mathcal{G}_\Gamma$-equivariant morphism $\rho : P \to R_G(\Gamma)(T)$. Note that $\rho$, on $T$-points, is given by
    \[ \rho : P \to R_G(\Gamma)(T) = \Hom(\operatorname{colim}\limits_{i \in I} \Gamma_i, G(T)) = \lim_{i \in I} \Hom(\Gamma_i, G(T)), \]
    so $\rho$ is equivalently described by compatible morphisms $\rho_i : P \to R_G(\Gamma_i)(T)$ which are $\mathcal{G}_{\Gamma_i}$-equivariant, where $\mathcal{G}_{\Gamma_i}$ acts on $P$ via $\mathcal{G}_\Gamma$.

    These two descriptions are related as follows. From the $\mathcal{G}_\Gamma$-torsor $P$, one constructs the $\mathcal{G}_{\Gamma_i}$-torsors $P_i = \mathcal{G}_{\Gamma_i} \times_{\mathcal{G}_\Gamma} P$, which are naturally compatible. Conversely, from the $P_i$ one constructs $\lim_{i \in I} P_i$, where the limit is taken as schemes over $T$, which naturally comes with the structure of a $(\lim_{i \in I} \mathcal{G}_{\Gamma_i})$-torsor, and one puts $P = \mathcal{G}_\Gamma \times_{\left(\lim_{i \in I} \mathcal{G}_{\Gamma_i}\right)} \lim_{i \in I} P_i$.
    This induces the desired isomorphism between $\lim_{i \in I} \mathfrak{X}_G(\Gamma_i)$ and $\mathfrak{X}_G(\Gamma)$.
\end{proof}

\subsection{Grothendieck ring of stacks}\label{sec:groth}

Recall (see Definition \ref{def:stacks}) that all the stacks considered in this paper are algebraic stacks of finite type over a field $k$ (or more generally over a finitely generated ring over $\ZZ$) with affine stabilizers at every closed point.

Following \cite{eke09}, we define the Grothendieck ring of stacks as follows.

\begin{definition}
   Let $\mathfrak{S}$ be an algebraic stack of finite type over a field $k$ with affine stabilizers. We define the \textit{Grothendieck ring of stacks over $\mathfrak{S}$}, denoted by $\K(\Stck_\mathfrak{S})$, as the free abelian group generated by the isomorphism classes $[\mathfrak{X}]$ of stacks $\mathfrak{X}$ over $\mathfrak{S}$, modulo the \textit{scissor relations}
    \[ [\mathfrak{X}] = [\mathfrak{Z}] + [\mathfrak{X} \setminus \mathfrak{Z}], \]
    for every closed substack $\mathfrak{Z} \subset \mathfrak{X}$ with open complement $\mathfrak{X} \setminus \mathfrak{Z}$. Note that $\mathfrak{Z}$ and $\mathfrak{X}\setminus \mathfrak{Z}$ are considered as stacks over $\mathfrak{S}$ via $\mathfrak{X}$. The multiplicative structure is given by the fiber product (see Lemma \ref{lem:prodaffinestabilizer})
    \[ [\mathfrak{X}] \cdot [\mathfrak{Y}] = [ \mathfrak{X} \times_{\mathfrak{S}} \mathfrak{Y} ] , \]
    for any algebraic stacks $\mathfrak{X}$ and $\mathfrak{Y}$ making $\K(\Stck_\mathfrak{S})$ a ring with unit $[(\mathfrak{S}, \id_{\mathfrak{S}})]$ and zero element $[\varnothing]$. Given a stack $\mathfrak{X}$ over $\mathfrak{S}$, its class $[\mathfrak{X}] \in \K(\Stck_\mathfrak{S})$ will be called the \emph{virtual class} of $\mathfrak{X}$.
\end{definition}

\begin{remark}
    In \cite{eke09}, Ekedahl defines a Grothendieck ring of stacks $\widetilde{\K}(\Stck_\mathfrak{S})$ as above with an additional relation that the class of every vector bundle is the same as the trivial bundle, meaning that
    \[ [\mathfrak{E}] = [ \AA^n_{\mathfrak{S}} \times \mathfrak{X} ] \]
    for every vector bundle $\mathfrak{E} \to \mathfrak{X}$ of rank $n$. 
    We choose to omit this additional relation as we would like to keep track of the group action. Of course, there is a natural quotient map (by quotienting out the additional relation)
    \begin{equation}\label{eq:KStck->KStck}
        \K(\Stck_\mathfrak{S})\to \widetilde{\K}(\Stck_\mathfrak{S}).
    \end{equation}
\end{remark}

Ekedahl's version of the Grothendieck ring of stacks over $k$ is isomorphic to the localization of the Grothendieck ring of varieties, $\K(\Var_{k})$, by inverting the class of the affine line $q = [\AA_k^1]$ and the classes of the form $q^n-1$ (coming from the classes of the general linear groups) providing a natural map
\begin{equation}\label{eq:KStck->KVar}
    \widetilde{\K}(\Stck_k)\to \widehat{\K}(\Var_{k})
\end{equation}
where $\widehat{\K}(\Var_{k})$ denotes the completion of the ring $\K(\Var_{k})[q^{-1}]$ with the filtration given by the powers of the class of the affine line.

By Lemma \ref{lem:prodaffinestabilizer}, any morphism $\mathfrak{X} \to \mathfrak{S}$ of algebraic stacks induces a $\K(\Stck_\mathfrak{S})$-module structure on $\K(\Stck_\mathfrak{X})$, where the module structure is given on the generators by
\[ [\mathfrak{T}] \cdot [\mathfrak{Y}] = [\mathfrak{T} \times_\mathfrak{S} \mathfrak{Y}].\]

Similarly, since the property of having affine stabilizers is an absolute notion, any morphism of algebraic stacks $f \colon \mathfrak{X} \to \mathfrak{Y}$ over $\mathfrak{S}$ induces a functor 
\[f_!: \Stck_\mathfrak{X}\to \Stck_\mathfrak{Y}\]
given by composing with $f$ that descends to a $\K(\Stck_\mathfrak{S})$-module morphism
\[\K(\Stck_\mathfrak{X}) \to \K(\Stck_\mathfrak{Y})\]
which we will denote by $f_!$ as well.
Finally, by Lemma \ref{lem:prodaffinestabilizer}, any morphism of algebraic stacks $f \colon \mathfrak{X} \to \mathfrak{Y}$ over $\mathfrak{S}$ induces a functor
\[f^*: \Stck_\mathfrak{Y} \to \Stck_\mathfrak{X}\]
given by the fiber product along $f$. This descends to a $\K(\Stck_\mathfrak{S})$-algebra morphism 
\[ f^* \colon \K(\Stck_\mathfrak{Y}) \to \K(\Stck_\mathfrak{X}).\]

\subsection{Field theory and quantization}
\label{sec:field_theory_and_quantization}

The goal of this section is to construct a lax monoidal TQFT
\[ Z_G : \Bord_n \to \K(\Stck_k)\textup{-}\Mod \]
quantizing the virtual class of $G$-character stacks. This lax TQFT will be constructed as the composition of two functors, the \textit{field theory} and the \textit{quantization functor}. While the field theory will be symmetric monoidal, the quantization functor will only be symmetric lax monoidal.

In fact, our construction will be slightly more general, since we shall construct a lax monoidal functor of $2$-categories, in such a way that the usual functor of $1$-categories appears after collapsing the $2$-morphisms. For this purpose, we need to consider the $2$-categorical version of $\Bord_n$, which we shall also denote by $\Bord_n$. As in the classical setting, the objects of this $2$-category are $(n-1)$-dimensional closed oriented manifolds. A $1$-morphism $(W, i_1, i_2): M_1 \to M_2$ between manifolds $M_1$ and $M_2$ is a bordism between $M_1$ and $M_2$, with boundary inclusions $i_1: M_1 \to W$ and $i_2: M_2 \to W$. Now, a $2$-morphism $\alpha: W \Rightarrow W'$ between two bordisms $(W, i_1, i_2), (W', i_1', i_2'): M_1 \to M_2$ is a boundary-preserving diffeomorphism $\alpha: W \to W'$ such that the diagram
    \[ \begin{tikzcd}[row sep=0.5em]
        & W \arrow{dd}{\alpha} & \\ M_2 \arrow{ur}{i_2} \arrow[swap]{dr}{i_2'} & & M_1 \arrow[swap]{ul}{i_1} \arrow{dl}{i_1'} \\ & W' &
    \end{tikzcd} \]
commutes. Notice that the classical category of bordisms used in Section \ref{sec:tqft} can be obtained from $\Bord_n$ by collapsing $2$-morphisms: we declare two $1$-morphisms to be equivalent if there exists a $2$-morphism between them.

\begin{definition}
    Let $\mathcal{C}$ be a $2$-category with pullbacks. The \emph{$2$-category of spans} over $\mathcal{C}$, denoted $\Span(\mathcal{C})$, is the $2$-category defined as follows. The objects of $\Span(\mathcal{C})$ are the same as the objects of $\mathcal{C}$, and a $1$-morphism between the two objects $X$ and $Y$ is a diagram $X \xleftarrow{f} Z \xrightarrow{g} Y$, called a \emph{span}, in $\mathcal{C}$. A $2$-morphism from a span $X \xleftarrow{f} Z \xrightarrow{g} Y$ to $X \xleftarrow{f'} Z' \xrightarrow{g'} Y$ is a morphism $h : Z \to Z'$ in $\mathcal{C}$ together with $2$-isomorphisms $\alpha : f \to f' \circ h$ and $\beta : g \to g' \circ h$ in $\mathcal{C}$.
    \[ \begin{tikzcd}[row sep=0.5em, column sep=3.0em, execute at end picture={
        \node at (-0.5, 0) {$\Downarrow$};
        \node at (0.5, 0) {$\Downarrow$};
        \node at (-0.75, 0) {\scriptsize $\alpha$};
        \node at (0.75, 0) {\scriptsize $\beta$};
    }] & Z \arrow[swap]{ld}{f} \arrow{rd}{g} \arrow{dd}{h} & \\ X & & Y \\ & Z' \arrow{lu}{f'} \arrow[swap]{ru}{g'} & \end{tikzcd} \]
    Composition of $1$-morphisms is given by pullback: given spans $X \xleftarrow{f} Z \xrightarrow{g} X'$ and $X' \xleftarrow{f'} Z' \xrightarrow{g'} X''$, their composite is the outer span of the diagram
    \[ \begin{tikzcd}[row sep=0.5em]
        & & Z \times_{X'} Z' \arrow{dr} \arrow{dl} & & \\
        & Z \arrow{dr}{g} \arrow[swap]{dl}{f} & & Z' \arrow{dr}{g'} \arrow[swap]{dl}{f'} & \\
        X & & X' & & X''
    \end{tikzcd} \]
    Composition of $2$-morphisms is given by vertical composition.
\end{definition}

\begin{definition}
    \label{def:geometric_field_theory}
    The \emph{geometric field theory} is the $2$-functor
    \[ \mathcal{F}_G : \Bord_n \to \Span(\Stck_k) \]
    defined as follows.
    \begin{itemize}
        \item To any compact oriented $(n - 1)$-dimensional manifold $M$, assign the the $G$-character stack $\mathcal{F}_G(M) = \mathfrak{X}_G(M)$, which is an Artin stack of finite type over $k$ with affine stabilizers.
        \item For any bordism $W : M_1 \to M_2$, the inclusions $i_i : M_i \to W$ induce morphisms of fundamental groupoids $\Pi(M_i) \to \Pi(W)$, which in turn induce morphisms
    \[ \mathfrak{X}_G(M_2) \xleftarrow{i_2^*} \mathfrak{X}_G(W) \xrightarrow{i_1^*} \mathfrak{X}_G(M_1) \]
    as explained in Remark \ref{rem:functorcharstack}. Define $\mathcal{F}_G(W)$ as the span $(\mathfrak{X}_G(W), i_1^*, i_2^*)$.
    \item For any boundary-preserving diffeomorphism $\alpha: W \to W'$ between bordisms $W$ and $W'$, we consider the induced isomorphism $\alpha^*: \mathfrak{X}_G(W') \to \mathfrak{X}_G(W)$ between the associated character stacks. Then, $\mathcal{F}_G(W)$ is the $2$-morphism of $\Span(\Stck_k)$ given by
    \[ \begin{tikzcd}[row sep=0.5em]
         & \mathfrak{X}_G(W)  \arrow{rd} \arrow[swap]{ld} & \\
        \mathfrak{X}_G(M_1)  & & \mathfrak{X}_G(M_1) \\
        & \mathfrak{X}_G(W') \arrow[swap]{uu}{\alpha^*} \arrow[swap]{ru} \arrow{lu} & \\
    \end{tikzcd} \]
\end{itemize}     
\end{definition}

\begin{proposition}
    \label{prop:field_theory_symmetric_monoidal_functor}
    The map $\mathcal{F}_G$ is a symmetric monoidal $2$-functor.
\end{proposition}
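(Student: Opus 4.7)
The plan is to realize $\mathcal{F}_G$ as the composite of the fundamental groupoid 2-functor $\Pi$ (which turns a bordism into a cospan of essentially finitely generated groupoids via the inclusions of the two boundary components) with the character-stack 2-functor $\mathfrak{X}_G(-)$ from Remark \ref{rem:functorcharstack} (which turns a cospan of groupoids into a span of stacks). All verifications reduce to the functoriality of $\Pi$ and $\mathfrak{X}_G$ together with one key compatibility: composition of bordisms must correspond to pullback of spans.

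To verify 2-functoriality on 1-morphisms, the main point is composition. Let $W = W_2 \cup_{M_1} W_1$ be the gluing of $W_1 : M_0 \to M_1$ and $W_2 : M_1 \to M_2$ along their common boundary $M_1$. The van Kampen theorem for fundamental groupoids identifies $\Pi(W)$ with the pushout $\Pi(W_1) \sqcup_{\Pi(M_1)} \Pi(W_2)$ in $\FGGrpd$; choosing a finitely generated model of this pushout compatibly with the boundary inclusions and applying Lemma \ref{lemma:character_stack_colimits_to_limits} converts it into a pullback
\[
\begin{tikzcd}
\mathfrak{X}_G(W) \arrow[r, "i_2^*"] \arrow[d, swap, "i_1^*"] & \mathfrak{X}_G(W_2) \arrow[d] \\
\mathfrak{X}_G(W_1) \arrow[r] & \mathfrak{X}_G(M_1)
\end{tikzcd}
\]
in $\Stck_k$, which is precisely the composite span $\mathcal{F}_G(W_2) \circ \mathcal{F}_G(W_1)$. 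For the identity bordism $M \times [0,1]$, the deformation retraction to $M$ gives an equivalence $\Pi(M \times [0,1]) \simeq \Pi(M)$ and hence an isomorphism of spans to the identity. Preservation of 2-morphisms is immediate: a boundary-preserving diffeomorphism $\alpha : W \to W'$ induces an equivalence $\Pi(\alpha)$ of groupoids and thus an isomorphism $\alpha^* : \mathfrak{X}_G(W') \to \mathfrak{X}_G(W)$ compatible with the two boundary inclusions, which is exactly a 2-morphism of spans; vertical and horizontal composition are respected by functoriality.

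For symmetric monoidality, I would first compute $\mathfrak{X}_G(\varnothing) = \Spec k$, since the empty manifold has empty fundamental groupoid and there is a unique functor from the empty groupoid to $G$; this is the unit of $\Span(\Stck_k)$. For disjoint unions, $\Pi(M_1 \sqcup M_2) \simeq \Pi(M_1) \sqcup \Pi(M_2)$ is the groupoid coproduct (a finite colimit in $\FGGrpd$), and a second application of Lemma \ref{lemma:character_stack_colimits_to_limits} yields a natural isomorphism $\mathfrak{X}_G(M_1 \sqcup M_2) \cong \mathfrak{X}_G(M_1) \times \mathfrak{X}_G(M_2)$, which is the monoidal product in $\Span(\Stck_k)$. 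The braiding in $\Bord_n$ is realized by the cylinder implementing the swap diffeomorphism $M_1 \sqcup M_2 \to M_2 \sqcup M_1$; by naturality this goes to the obvious swap span, which is the symmetry in $\Span(\Stck_k)$.

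The main technical obstacle is the 2-categorical coherence: one needs the associators and unitors of $\mathcal{F}_G$ to be compatible with those of $\Bord_n$ and $\Span(\Stck_k)$. In concrete terms, iterated gluings produce iterated pushouts of groupoids, and one must choose the finitely generated models of the fundamental groupoids $\Pi(W)$, $\Pi(W_i)$, $\Pi(M_i)$ in a way compatible with all inclusions and gluings simultaneously, so that the pushouts are strict and the induced pullbacks of stacks satisfy the pentagon and hexagon identities up to the canonical natural isomorphisms supplied by Lemma \ref{lemma:character_stack_colimits_to_limits}. Once these compatible choices are fixed, the coherence axioms follow formally from the universal property of pullbacks in $\Stck_k$ and pushouts in $\FGGrpd$.
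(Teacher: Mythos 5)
Your proposal is correct and follows essentially the same route as the paper: the paper's proof simply invokes the argument of \cite[Proposition 4.7]{gonzalez2022virtual}, noting that it goes through using the Seifert--van Kampen theorem for fundamental groupoids and Lemma \ref{lemma:character_stack_colimits_to_limits}, which are exactly the two ingredients (pushout of fundamental groupoids along the glued boundary, converted to a pullback of character stacks) that your argument spells out, together with the same treatment of units, disjoint unions, and diffeomorphism-induced $2$-morphisms. Your proposal is in effect a fleshed-out version of the proof the paper delegates to the citation.
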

\begin{proof}
    The proposition is an adaption of \cite[Proposition 4.7]{gonzalez2022virtual} to a base-point-free setting. In fact, the proof of \cite[Proposition 4.7]{gonzalez2022virtual} goes through line-by-line using the Seifert-van Kampen theorem \cite{brown1967groupoids} and Lemma \ref{lemma:character_stack_colimits_to_limits}.
\end{proof}

Now, let us consider the category $R\textup{-}\Mod$ of modules over a ring $R$. It is naturally a category, which we promote to a $2$-category by considering only identity $2$-morphisms between module homomorphism.

\begin{definition}
    The \emph{quantization functor} is the $2$-functor
    \[ \mathcal{Q} : \Span(\Stck_k) \to \K(\Stck_k)\textup{-}\Mod \]
    defined as follows.
    \begin{itemize}
        \item To an object $\mathfrak{X}$ of $\Span(\Stck_k)$, we associate the $\K(\Stck_k)$-module $\K(\Stck_\mathfrak{X})$.
        \item To a span $\mathfrak{X} \xleftarrow{f} \mathfrak{Z} \xrightarrow{g} \mathfrak{Y}$, we associate the morphism $g_! \circ f^* : \K(\Stck_\mathfrak{X}) \to \K(\Stck_\mathfrak{Y})$ of $\K(\Stck_k)$-modules
    \end{itemize}
\end{definition}

As in \cite{gonlogmun20}, the quantization functor is symmetric lax monoidal, but not monoidal since the natural map
\[ \begin{array}{ccc}
    \K(\Stck_\mathfrak{X}) \otimes_{\K(\Stck_k)} \K(\Stck_\mathfrak{Y}) & \to & \K(\Stck_{\mathfrak{X} \times \mathfrak{Y}}) \\[5pt]
    {[\mathfrak{U} \rightarrow \mathfrak{X}]} \otimes {[\mathfrak{V} \rightarrow \mathfrak{Y}]} & \mapsto & {[ \mathfrak{U} \times \mathfrak{V} \to \mathfrak{X} \times \mathfrak{Y} ]}
\end{array} \]
is generally not an isomorphism. For a proof that $\mathcal{Q}$ is indeed a symmetric lax monoidal functor, see \cite{gonzalez2022virtual}.

\begin{remark}
    Note that the construction of $\mathcal{Q}$ is well-defined, because two spans related by a $2$-morphism are sent to the same module morphisms. Indeed, given a $2$-morphism of spans
    \[ \begin{tikzcd}[row sep=0.5em]
        & \mathfrak{Z} \arrow{dd}{h} \arrow{rd}{g} \arrow[swap]{ld}{f} & \\
        \mathfrak{X} & & \mathfrak{Y}\\
        & \mathfrak{Z}'  \arrow[swap]{ru}{g'} \arrow{lu}{f'} & \\
    \end{tikzcd} \]
    observe that, since $h$ is an isomorphism, the square
    \[ \begin{tikzcd}
        \mathfrak{Z} \arrow[swap]{d}{\id} \arrow{r}{\id} & \mathfrak{Z} \arrow{d}{h} \\
        \mathfrak{Z} \arrow{r}{h} & \mathfrak{Z}'
    \end{tikzcd} \]
    is cartesian, so that $g_! \circ f^* = (g')_! \circ h_! \circ h^* \circ (f')^* = (g')_! \circ \id^* \circ \id_! \circ (f')^* = (g')_! \circ (f')^*$. %
\end{remark}

\begin{definition}
    The \emph{character stack TQFT} $Z_G : \Bord_n \to \K(\Stck_k)\textup{-}\Mod$ is the composition $\mathcal{Q} \circ \mathcal{F}_G$, which is a lax monoidal $2$-functor.
\end{definition}

\begin{remark}   
Since in $R\textup{-}\Mod$ we only allow identities as $2$-morphisms, any $2$-functor $Z: \Bord_n \to R\textrm{-}\Mod$ must send any two diffeomorphic bordisms to the same homomorphism of $ R\textup{-}\Mod$. Hence, any such functor induces a regular functor between the underlying $1$-categories as discussed in Section \ref{sec:tqft}, where in $\Bord_n$ the morphisms are classes of bordisms up to boundary-preserving difeomorphism. In particular, $Z_G: \Bord_n \to \K(\Stck_k)\textup{-}\Mod$ can be also seen as a regular lax monoidal TQFT.
\end{remark}

\begin{theorem}
    \label{thm:character_stack_TQFT}
    Given a closed $n$-dimensional manifold $W : \varnothing \to \varnothing$, the character stack TQFT quantizes the virtual class of the $G$-character stack of $W$, that is,
    \[ Z_G(W)(1) = [\mathfrak{X}_G(W)] \in \K(\Stck_k) . \]
\end{theorem}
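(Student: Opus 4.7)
The plan is to unwind the definitions of $\mathcal{F}_G$ and $\mathcal{Q}$ on the bordism $W \colon \varnothing \to \varnothing$ and observe that everything reduces to computing $g_! \circ f^*(1)$ for a very simple span.

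First, I would identify $\mathfrak{X}_G(\varnothing)$. The fundamental groupoid $\Pi(\varnothing)$ is the empty groupoid, so $R_G(\varnothing) = \Hom(\Pi(\varnothing), G) = \Spec k$ and $\mathcal{G}_{\Pi(\varnothing)} = \prod_{x \in \varnothing} G$ is the trivial group. Hence $\mathfrak{X}_G(\varnothing) = \Spec k$. Consequently, $\mathcal{Q}(\mathfrak{X}_G(\varnothing)) = \K(\Stck_{\Spec k}) = \K(\Stck_k)$, which is consistent with the lax monoidal structure sending $\varnothing \in \Bord_n$ to the unit object of $\K(\Stck_k)\textup{-}\Mod$.

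Next, by Definition \ref{def:geometric_field_theory}, $\mathcal{F}_G(W)$ is the span
\[ \Spec k \xleftarrow{f} \mathfrak{X}_G(W) \xrightarrow{g} \Spec k \]
where $f$ and $g$ are the structure morphisms induced by the two empty inclusions $\varnothing \hookrightarrow W$. Applying the quantization functor $\mathcal{Q}$, we obtain
\[ Z_G(W) = g_! \circ f^* : \K(\Stck_k) \to \K(\Stck_k). \]

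Finally, I would evaluate at $1 \in \K(\Stck_k)$. The unit corresponds to the class of the identity $[\Spec k \xrightarrow{\id} \Spec k]$. Pulling back along $f$ yields the class $[\mathfrak{X}_G(W) \xrightarrow{\id} \mathfrak{X}_G(W)]$ in $\K(\Stck_{\mathfrak{X}_G(W)})$, and then $g_!$ composes with $g$ to give $[\mathfrak{X}_G(W) \xrightarrow{g} \Spec k] = [\mathfrak{X}_G(W)] \in \K(\Stck_k)$. Combining these steps gives $Z_G(W)(1) = [\mathfrak{X}_G(W)]$, as desired.

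There is essentially no obstacle here beyond bookkeeping: the substantive content has already been established in Proposition \ref{prop:field_theory_symmetric_monoidal_functor} (that $\mathcal{F}_G$ is a well-defined symmetric monoidal $2$-functor, via Seifert--van Kampen and Lemma \ref{lemma:character_stack_colimits_to_limits}) and in the fact that $\mathcal{Q}$ is lax monoidal with the correct behaviour on the unit. The only subtlety worth flagging explicitly is that, since $Z_G$ is only lax monoidal, one must verify that the lax structure map $\K(\Stck_k) \otimes_{\K(\Stck_k)} \K(\Stck_k) \to \K(\Stck_k)$ on the empty object is the identity, which is immediate from $\mathfrak{X}_G(\varnothing) = \Spec k$.
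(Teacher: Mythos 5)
Your proposal is correct and follows essentially the same route as the paper's own proof: it identifies $\mathcal{F}_G(W)$ as the span $\Spec k \leftarrow \mathfrak{X}_G(W) \rightarrow \Spec k$ and computes $Z_G(W)(1) = g_! f^*(1) = [\mathfrak{X}_G(W)]$ directly from the definitions of pullback and pushforward on Grothendieck rings. The extra remarks on $\mathfrak{X}_G(\varnothing) = \Spec k$ and the unit of the lax monoidal structure are fine but not needed beyond what the paper already records.
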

\begin{proof}
    The field theory $\mathcal{F}_G(W)$ associated to $W : \varnothing \to \varnothing$ is the span
     \[ \Spec k \xleftarrow{t} \mathfrak{X}_G(W) \xrightarrow{t} \Spec k , \]
     and hence, applying $\mathcal{Q}$, we obtain
     \[ Z_G(W)(1) = t_! t^* (1) = t_! [\mathfrak{X}_G(W)]_{\mathfrak{X}_G(W)} = [\mathfrak{X}_G(W)] \in \K(\Stck_k). \qedhere \]
\end{proof}

\begin{remark}
    Theorem \ref{thm:character_stack_TQFT} generalizes the results of \cite{gonzalez2022virtual} to the basepoint-free setting. To be precise, in \cite{gonzalez2022virtual}, an auxiliary choice of a finite set of points on the bordism is mandatory to make the field theory functorial. However, using the local gauge action explained in Section \ref{sec:charstacks}, instead of the global gauge used in \cite{gonzalez2022virtual}, the results can be extended to the basepoint-free case. 
\end{remark}

\section{Arithmetic TQFT}
\label{sec:arithmetic_tqft}

The goal of this section is to construct the \textit{arithmetic TQFT}, a higher-dimensional analogue of the Frobenius TQFT of Section \ref{sec:representation_ring}. Throughout this section, we will fix a finite group $G$. In applications, this finite group will usually be the $\FF_q$-points of an algebraic group.

While the TQFT of Section \ref{sec:representation_ring} is defined in an ad-hoc manner, in terms of specific operations on the representation ring $R_\CC(G)$, the construction of the arithmetic TQFT will be very similar to that of the character stack TQFT: as the composition of a field theory and a quantization.

\subsection{Field theory and quantization}

\begin{definition}
    \label{def:arithmetic_field_theory}
    The \emph{arithmetic field theory} is the 2-functor $\mathcal{F}^\#_G : \Bord_n \to \operatorname{Span}(\FinGrpd)$ which assigns to a closed $(n - 1)$-dimensional manifold $M$ the $G$-character groupoid
    \[ \mathcal{F}^\#_G(M) = \mathfrak{X}_G(M), \]
    to a 1-morphism, i.e. to a bordism $W : M_1 \to M_2$ the span
    \[ \mathcal{F}^\#_G(W) = \left( \mathfrak{X}_G(M_1) \xleftarrow{i_1^*} \mathfrak{X}_G(W) \xrightarrow{i_2^*} \mathfrak{X}_G(M_2) \right) \]
    and to a 2-morphism, i.e.\ to a diffeomorphism, the 2-cell induced by the diffeomorphism.
\end{definition}

\begin{remark}
    Here we are using the character groupoid $\mathfrak{X}_G(M)$ of $M$ on the finite group $G$, that is, the groupoid $\mathfrak{X}_G(M) = \textbf{Fun}(\Gamma, G)$. When $G$ is the group of $\FF_q$-points of an algebraic group, this groupoid is in general different from the groupoid of $\FF_q$-points of the $G$-character stack, as seen in Example \ref{rmk:comparison-stack-groupoid}.
\end{remark}

\begin{proposition}
    $\mathcal{F}^\#_G$ is a symmetric monoidal functor.
\end{proposition}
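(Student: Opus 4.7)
The plan is to run the same argument as in Proposition \ref{prop:field_theory_symmetric_monoidal_functor}, but replacing the stack-valued $\mathfrak{X}_G$ by its groupoid-valued counterpart. Concretely, $\mathcal{F}^\#_G$ factors as
\[ \Bord_n \xrightarrow{\Pi} \FGGrpd^{\textup{op}} \xrightarrow{\mathfrak{X}_G} \FinGrpd \hookrightarrow \Span(\FinGrpd), \]
where the last arrow takes a morphism $\varphi : \Gamma_1 \to \Gamma_2$ to the span $\Gamma_1 \xleftarrow{\varphi} \Gamma_2 \xrightarrow{\id} \Gamma_2$, or rather to the appropriately chosen span encoding the in- and out-boundaries of a bordism. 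So the task reduces to showing that $\mathfrak{X}_G \circ \Pi$ sends composition of bordisms to pullback of groupoids and disjoint unions to products.

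The main technical ingredient is the groupoid analogue of Lemma \ref{lemma:character_stack_colimits_to_limits}, namely that $\mathfrak{X}_G : \FGGrpd \to \FinGrpd^{\textup{op}}$ sends finite colimits to finite limits. This is essentially automatic since $\mathfrak{X}_G(\Gamma) = \textbf{Fun}(\Gamma, G)$ is a hom-functor in the first argument, and $\textbf{Fun}(-, G)$ takes colimits to limits by the usual 2-categorical adjunction; one only needs to check that the resulting groupoid remains essentially finite, which is clear because a finite colimit of finitely generated groupoids is finitely generated and $G$ is finite. Combining this with the Seifert–van Kampen theorem for fundamental groupoids \cite{brown1967groupoids}, which yields $\Pi(W_1 \cup_M W_2) = \Pi(W_1) \sqcup_{\Pi(M)} \Pi(W_2)$ for a gluing of bordisms along $M$, gives the pullback square
\[ \mathfrak{X}_G(W_1 \cup_M W_2) \;\cong\; \mathfrak{X}_G(W_1) \times_{\mathfrak{X}_G(M)} \mathfrak{X}_G(W_2), \]
which is precisely what is needed to ensure $\mathcal{F}^\#_G(W_2 \circ W_1) = \mathcal{F}^\#_G(W_2) \circ \mathcal{F}^\#_G(W_1)$ in $\Span(\FinGrpd)$. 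The action on 2-morphisms, induced by boundary-preserving diffeomorphisms, is handled exactly as in Definition \ref{def:geometric_field_theory}.

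For the symmetric monoidal structure, one observes that $\Pi(\varnothing)$ is the initial (empty) groupoid and that $\Pi(M_1 \sqcup M_2) = \Pi(M_1) \sqcup \Pi(M_2)$ is a coproduct in $\FGGrpd$. Applying the colimit-to-limit property then yields $\mathfrak{X}_G(\varnothing) \cong \mathbf{1}$, the terminal groupoid, and $\mathfrak{X}_G(M_1 \sqcup M_2) \cong \mathfrak{X}_G(M_1) \times \mathfrak{X}_G(M_2)$. Compatibility with the braidings is routine since the symmetry in $\Bord_n$ comes from swapping disjoint unions, which $\Pi$ turns into swapping of coproducts, and this is sent by $\mathfrak{X}_G$ to the canonical symmetry of products in $\FinGrpd$.

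The only mildly delicate point is bookkeeping: we must verify that the 2-isomorphisms witnessing monoidality and functoriality are coherent, i.e., that the natural isomorphisms of character groupoids induced by Seifert–van Kampen satisfy the associativity and unit pentagons of a (symmetric) monoidal $2$-functor. This is formal once the pullback/product identifications are set up, following the same coherence check as in \cite[Proposition 4.7]{gonzalez2022virtual}; no new ideas are needed beyond those already used in Proposition \ref{prop:field_theory_symmetric_monoidal_functor}.
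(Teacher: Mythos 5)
Your proof is correct and follows essentially the same route as the paper, which simply declares the argument ``completely analogous'' to Proposition \ref{prop:field_theory_symmetric_monoidal_functor}: Seifert--van Kampen for fundamental groupoids plus the fact that $\mathfrak{X}_G = \textbf{Fun}(-,G)$ turns the relevant colimits into limits, with monoidality coming from coproducts of groupoids going to products. Your write-up just makes explicit the groupoid analogue of Lemma \ref{lemma:character_stack_colimits_to_limits} (trivial here, as you note, since $\mathfrak{X}_G$ is a hom-functor), which is more detail than the paper itself provides.
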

\begin{proof}
    Completely analogous to the proof of Proposition \ref{prop:field_theory_symmetric_monoidal_functor}.
\end{proof}

\begin{definition}
    Let $\Gamma$ be a groupoid. Denote by $\CC^\Gamma$ the complex vector space of complex-valued functions on the objects of $\Gamma$ which are invariant under isomorphism. In other words, an element of $\CC^\Gamma$ can be seen as a function $[\Gamma] \to \CC$, where $[\Gamma]$ denotes the set of isomorphism classes of $\Gamma$. 
\end{definition}

Given a functor $f : \Gamma \to \Gamma'$ of groupoids, we can pullback functions via
\[ f^* : \CC^{\Gamma'} \to \CC^\Gamma, \quad \varphi \mapsto \varphi \circ f . \]
Moreover, if $\mu : f \Rightarrow g$ is a natural transformation between functors $f, g : \Gamma \to \Gamma'$, then $f^* = g^*$. In particular, if $\Gamma$ and $\Gamma'$ are equivalent groupoids, then $\CC^\Gamma$ and $\CC^{\Gamma'}$ are naturally isomorphic.

Furthermore, given a functor $f : \Gamma \to \Gamma'$ of essentially finite groupoids, we define push-forward along $f$ as
\[ f_! : \CC^\Gamma \to \CC^{\Gamma'}, \quad \varphi \mapsto \left( \gamma' \mapsto \sum_{[(\gamma, \alpha)] \in [f^{-1}(\gamma')]} \frac{\varphi(\gamma)}{|\Aut(\gamma, \alpha)|} \right) , \]
where $f^{-1}(\gamma')$ denotes the fiber product $\Gamma \times_{\Gamma'} \{ \gamma' \}$ of groupoids.

\begin{definition}
    The \emph{arithmetic quantization functor} is the 2-functor
    \[ \mathcal{Q}^\# : \operatorname{Span}(\FinGrpd) \to \Vect_\CC \]
    which assigns to an essentially finite groupoid $\Gamma$ the vector space $\CC^\Gamma$ and assigns to a span of essentially finite groupoids $\Gamma' \xleftarrow{f} \Gamma \xrightarrow{g} \Gamma''$ the morphism $g_! \circ f^* : \CC^{\Gamma'} \to \CC^{\Gamma''}$.
\end{definition}

\begin{lemma}
    $\mathcal{Q}^\#$ is a symmetric monoidal functor.
\end{lemma}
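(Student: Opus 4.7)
The plan is to verify the conditions of a symmetric monoidal $2$-functor: functoriality (identity spans and span composition) and compatibility with the monoidal structure (unit, tensor, symmetry). The argument follows the same template as for the geometric quantization $\mathcal{Q}$ in Section~\ref{sec:field_theory_and_quantization}, but is actually simpler here since $\CC^\Gamma$ is finite-dimensional for any essentially finite $\Gamma$, so the comparison map $\CC^{\Gamma_1} \otimes \CC^{\Gamma_2} \to \CC^{\Gamma_1 \times \Gamma_2}$ is an isomorphism and monoidality holds strictly rather than only laxly.

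The crux is functoriality on composable spans. Given $\Gamma_1 \xleftarrow{f_1} \Delta_1 \xrightarrow{g_1} \Gamma_2 \xleftarrow{f_2} \Delta_2 \xrightarrow{g_2} \Gamma_3$ with homotopy pullback $\Delta_1 \times_{\Gamma_2} \Delta_2$ and projections $p_1, p_2$, the required equality $(g_2 \circ p_2)_! \circ (f_1 \circ p_1)^* = (g_2)_! \circ f_2^* \circ (g_1)_! \circ f_1^*$ reduces to the base-change identity
\[
(p_2)_! \circ p_1^* = f_2^* \circ (g_1)_!.
\]
To prove this, I would unwind both sides: evaluated on $\varphi \in \CC^{\Delta_1}$ at $\delta_2 \in \Delta_2$, each is a weighted sum over isomorphism classes of the homotopy fiber groupoid $p_2^{-1}(\delta_2)$ or $g_1^{-1}(f_2(\delta_2))$ respectively. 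The explicit assignment $(\delta_1, \alpha) \mapsto ((\delta_1, \delta_2, \alpha), \id_{\delta_2})$ provides an equivalence between these two fiber groupoids which intertwines the two integrands, and invariance of groupoid cardinality under equivalence forces the sums to match. Identity spans trivially go to identities, and invariance on $2$-morphisms of spans follows from the invariance of $f^*$ and $g_!$ under natural isomorphisms of functors, combined with the identity $h_! \circ h^* = \id$ for an equivalence $h$ (valid because the homotopy fiber of an equivalence over any object has a unique isomorphism class with trivial automorphism group); since $\Vect_\CC$ carries only identity $2$-morphisms, this is automatically the correct image.

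For monoidality, the symmetric monoidal structure on $\Span(\FinGrpd)$ is induced by the Cartesian product of groupoids with unit the terminal groupoid $\{\ast\}$, matching $\CC^{\{\ast\}} = \CC$ as the unit of $\Vect_\CC$. The identification of isomorphism classes $[\Gamma_1 \times \Gamma_2] = [\Gamma_1] \times [\Gamma_2]$ then gives a canonical isomorphism $\CC^{\Gamma_1 \times \Gamma_2} \cong \CC^{\Gamma_1} \otimes_\CC \CC^{\Gamma_2}$, which is readily checked to be natural in spans and compatible with the associator and symmetry constraints. The main obstacle in the whole proof is the groupoid base-change identity from the previous paragraph: its set-theoretic analogue is elementary, but the groupoid version requires careful bookkeeping of the weights $1/|\Aut(\cdot)|$, which is carried out via the explicit equivalence of homotopy fiber groupoids, after which invariance of groupoid cardinality under equivalence does the rest.
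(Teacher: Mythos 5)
Your proof is correct and follows essentially the same route as the paper: the heart of both arguments is the base-change identity $(p_2)_! \circ p_1^* = f_2^* \circ (g_1)_!$, proved by exhibiting an explicit equivalence between the homotopy fiber of the projection from the pseudo-pullback and the fiber of $g_1$ over $f_2(\delta_2)$, and then invoking invariance of the weighted sums $\sum 1/|\Aut(\cdot)|$ under equivalence. The only difference is cosmetic: you additionally spell out the (strict) monoidality via $\CC^{\Gamma_1 \times \Gamma_2} \cong \CC^{\Gamma_1} \otimes_\CC \CC^{\Gamma_2}$ and the behaviour on $2$-morphisms, which the paper treats implicitly.
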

\begin{proof}
    Let $A' \xleftarrow{f} B \xrightarrow{g} A$ and $A \xleftarrow{h} C \xrightarrow{i} A''$ be two spans of essentially finite groupoids. The relevant diagram in $\Vect_\CC$ is given by
    \[ \begin{tikzcd}[row sep=0.5em]
        & & \CC^{B \times_A C} \arrow{dr}{(\pi_C)_!} & & \\
        & \CC^B \arrow{ur}{\pi_B^*} \arrow{dr}{g_!} & & \CC^C \arrow{dr}{j_!} & \\
        \CC^{A'} \arrow{ur}{f^*} & & \CC^{A} \arrow{ur}{h^*} & & \CC^{A''}
    \end{tikzcd} \]
    where $\pi_B : B \times_A C \to B$ and $\pi_C : B \times_A C \to C$ are the projections. To show $\mathcal{Q}^\#$ preserves compositions, it suffices to show that $h^* \circ g_! = (\pi_C)_! \circ \pi_B^*$.

    First observe that, for any $\tilde{c} \in C$, the groupoids $\pi_C^{-1}(\tilde{c}) = (B \times_A C) \times_C \{ \tilde{c} \}$ and $g^{-1}(h(\tilde{c})) = B \times_A \{ h(\tilde{c}) \}$ are equivalent. Indeed explicitly, an object of $\pi_C^{-1}(\tilde{c})$ is a tuple $(b, c, \alpha, \gamma)$ with $(b, c, \alpha) \in B \times_A C$ and $\gamma : c \to \tilde{c}$ a morphism in $C$. A morphism $(b', c', \alpha', \gamma') \to (b, c, \alpha, \gamma)$ is given by a tuple of morphisms $(\beta : b' \to b, \zeta : c' \to c)$ such that $\alpha \circ g(\beta) = h(\zeta) \circ \alpha'$ and $\gamma \circ \zeta = \gamma'$. By appropriate choice of $\zeta$, this is equivalent to the groupoid whose objects are $(b, \alpha)$ with $b \in B$ and $\alpha : g(b) \to h(\tilde{c})$ and morphisms $(b', \alpha') \to (b, \alpha)$ are morphisms $\beta : b' \to b$ such that $\alpha' \circ g(\beta) = \alpha$. But this is precisely $g^{-1}(h(\tilde{c}))$. 

    Now, for any $\varphi \in \CC^B$ and any $c \in C$, it follows that
    \[ ((\pi_C)_! \pi_B^* \varphi)(c) = \sum_{[(b, c, \alpha, \gamma)] \in [\pi_C^{-1}(c)]} \frac{\varphi(b)}{|\Aut(b, c, \alpha, \gamma)|} = \sum_{[(b, \alpha)] \in [g^{-1}(h(c))]} \frac{\varphi(b)}{|\Aut(b, \alpha)|} = (h^* g_! \varphi)(c) . \qedhere \]
\end{proof}

\begin{definition}
    The \emph{arithmetic TQFT} $Z^\#_G : \Bord_n \to \Vect_\CC$ is the composition $\mathcal{Q}^\# \circ \mathcal{F}^\#_G$.
\end{definition}

\begin{proposition}
    \label{prop:invariant-arithmetic}
    Given a closed $n$-dimensional manifold $W : \varnothing \to \varnothing$, the arithmetic TQFT quantizes the groupoid cardinality of the $G$-character groupoid of $W$, that is,
    \[ Z^\#_G(W)(1) = |\mathfrak{X}_G(W)| = \sum_{[x] \in [\mathfrak{X}_G(W)]} \frac{1}{|\Aut(x)|} , \]
    where the sum runs over the isomorphism classes $[x]$ of the groupoid $\mathfrak{X}_G(W)$.
\end{proposition}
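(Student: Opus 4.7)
The plan is to unfold the composition $Z^\#_G = \mathcal{Q}^\# \circ \mathcal{F}^\#_G$ directly on the bordism $W : \varnothing \to \varnothing$ and identify the resulting scalar with the groupoid cardinality formula from Definition \ref{defn:groupoid-cardinality}. There is no deep content here: the proposition is really a compatibility check between the general machinery (the arithmetic field theory, the arithmetic quantization, and the groupoid cardinality) and the specific case of closed $n$-manifolds.

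First I would compute $\mathcal{F}^\#_G(W)$. Since the fundamental groupoid $\Pi(\varnothing)$ is the empty groupoid, there is a unique functor from it to $G$, and this functor has trivial automorphism group, so $\mathfrak{X}_G(\varnothing) \simeq *$ is the terminal (finite) groupoid. By Definition \ref{def:arithmetic_field_theory}, the span associated to $W : \varnothing \to \varnothing$ is therefore
\[ * \xleftarrow{t} \mathfrak{X}_G(W) \xrightarrow{t} * , \]
where $t$ denotes the unique functor to the terminal groupoid. Applying $\mathcal{Q}^\#$, we get a $\CC$-linear map $t_! \circ t^* : \CC^* \to \CC^*$, and under the canonical identification $\CC^* \cong \CC$ (evaluation at the unique isomorphism class) this is just multiplication by the scalar $(t_! t^* 1)(*)$.

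Now I would evaluate this scalar. The pullback $t^*(1)$ is the constant function $1$ on $\mathfrak{X}_G(W)$. For the pushforward, by definition
\[ (t_! (1))(*) = \sum_{[(x,\alpha)] \in [t^{-1}(*)]} \frac{1}{|\Aut(x,\alpha)|} , \]
and the fiber groupoid $t^{-1}(*) = \mathfrak{X}_G(W) \times_* \{*\}$ is canonically equivalent to $\mathfrak{X}_G(W)$ itself (the data $\alpha$ is unique). Hence
\[ Z^\#_G(W)(1) = \sum_{[x] \in [\mathfrak{X}_G(W)]} \frac{1}{|\Aut(x)|} , \]
which is exactly $|\mathfrak{X}_G(W)|$ by Definition \ref{defn:groupoid-cardinality}.

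The only subtlety worth flagging is the identification $t^{-1}(*) \simeq \mathfrak{X}_G(W)$ of $2$-groupoids and the well-definedness of $|\mathfrak{X}_G(W)|$ (both rely on $\mathfrak{X}_G(W)$ being essentially finite, which is ensured because $\Pi(W)$ is essentially finitely generated for $W$ compact and $G$ is finite). These are routine, so the proof reduces to the unfolding above.
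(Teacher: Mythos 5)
Your unfolding of $Z^\#_G = \mathcal{Q}^\# \circ \mathcal{F}^\#_G$ on $W:\varnothing\to\varnothing$, with the span $\star \leftarrow \mathfrak{X}_G(W) \to \star$, the pullback of $1$ to the constant function, and the pushforward computed via the equivalence $t^{-1}(\star)\simeq \mathfrak{X}_G(W)$, is exactly the paper's own argument. The proof is correct and essentially identical to the one in the paper (only note that $\mathfrak{X}_G(W)$ is an ordinary groupoid, not a $2$-groupoid).
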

\begin{proof}
    The field theory associated to $W : \varnothing \to \varnothing$ is the span
    \[ \begin{tikzcd} \star & \mathfrak{X}_G(W) \arrow[swap]{l}{c} \arrow{r}{c} & \star \end{tikzcd} \]
    where $\star$ denotes the trivial groupoid.
    Note that $Z^\#_G(\star)$ can naturally be identified with $\CC$. Under this identification, $c^* : \CC \to \CC^{\mathfrak{X}_G(W)}$ is given by $c^*(1) = 1_{\mathfrak{X}_G(W)}$, where $1_{\mathfrak{X}_G(W)}$ denotes the constant function one. Hence,
    \[ Z^\#_G(W)(1) = c_! c^*(1) = c_!(1_{\mathfrak{X}_G(W)}) = \sum_{[x] \in [\mathfrak{X}_G(W)]} \frac{1_{\mathfrak{X}_G(W)}(x)}{|\Aut(x)|} = \sum_{[x] \in [\mathfrak{X}_G(W)]} \frac{1}{|\Aut(x)|} , \]
    where in the third equality we used that $c^{-1}(\star) = \mathfrak{X}_G(W)$.
\end{proof}

\begin{remark}
The whole construction of this section can be repeated considering $\QQ$-valued functions, leading to a functor $\Bord_n \to \Vect_\QQ$. However, as we shall show later, we want to emphasize the similarities of this construction with group characters, and for this reason, it is preferable to consider complex coefficients.
\end{remark}

\subsection{Comparison with the Frobenius TQFT}
\label{sec:arithmetic-tqft-frobenius}

Let us return to the case $n = 2$. For a finite group $G$, we have $\mathfrak{X}_G(S^1) = [G/G]$ where $G$ acts on itself by conjugation. In particular, $Z^\#_G(S^1)$ is the complex vector space of complex functions on $G$ which are invariant under conjugation, which can be naturally identified with the vector space of the representation ring $R_\CC(G)$. That is, there is an isomorphism
\begin{equation}
    \label{eq:isomorphism_representation_ring_equivariant_functions_G}
    Z_G^\#(S^1) = \CC^{[G/G]} \cong R_\CC(G) = Z_G^\Fr(S^1)
\end{equation}

\begin{proposition}
    \label{prop:equiv-frobenius}
    Let $G$ be a finite group. For $n = 2$, there is a natural isomorphism $Z^\#_G \cong Z_G^\Fr$ from the arithmetic TQFT to the Frobenius TQFT of Section \ref{sec:representation_ring}.
\end{proposition}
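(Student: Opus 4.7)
The plan is to invoke the classification of $2$-dimensional TQFTs (Theorem \ref{thm:equivalence_tqft_frobenius}): a natural isomorphism $Z^\#_G \cong Z_G^\Fr$ will follow from an isomorphism of the associated commutative Frobenius algebras $Z^\#_G(S^1) \to Z_G^\Fr(S^1) = R_\CC(G)$. The vector space isomorphism is already given by \eqref{eq:isomorphism_representation_ring_equivariant_functions_G}, since $\CC$-valued conjugation-invariant functions on $G$ are precisely class functions. What remains is to verify that this identification intertwines the multiplications and bilinear forms, from which the remaining Frobenius-algebra data ($\eta, \delta, \varepsilon, \gamma$) is determined.

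First I would compute $Z_G^\#$ on the pair of pants $P\colon S^1 \sqcup S^1 \to S^1$. Since $P$ is homotopy equivalent to the wedge of two circles, $\Pi(P)$ is essentially the free group on two generators, and the two in-boundary circles pick out the two generators while the out-boundary is homotopic to their product. Thus the field theory gives the span
\[
[G/G] \times [G/G] \xleftarrow{f} [G^2/G] \xrightarrow{g} [G/G], \qquad g(a,b) = ab,
\]
and a direct computation shows that the fiber groupoid $g^{-1}(c)$ is equivalent to the discrete set $\{(a,b)\in G^2 : ab=c\}$ (every object $(a,b,\alpha)$ with $\alpha(ab)\alpha^{-1}=c$ is isomorphic to $(\alpha a\alpha^{-1},\alpha b\alpha^{-1},1)$ via the automorphism $\alpha$, and the resulting objects have trivial stabilizer). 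Hence by the groupoid-cardinality formula for $g_!$,
\[
\bigl(g_! f^*(\varphi_1 \otimes \varphi_2)\bigr)(c) = \sum_{ab=c} \varphi_1(a)\varphi_2(b),
\]
which is precisely the convolution product $\mu$ on $R_\CC(G)$.

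Next I would carry out the analogous computations for the two remaining elementary bordisms. For the unit disk $D^2\colon \varnothing \to S^1$, the span is $\star \leftarrow BG \to [G/G]$ with right-hand map $\ast \mapsto 1$; the fiber over $c\in[G/G]$ is empty unless $c=1$, in which case it is equivalent to a point with trivial automorphism group, so $Z^\#_G(D^2)(1) = \mathbbm{1}_1 = \eta(1)$. For the cup $\bdbeta\colon S^1 \sqcup S^1 \to \varnothing$, the cylinder's fundamental group is $\ZZ$ and the two in-boundary circles represent opposite powers of the generator, giving a span $[G/G]^2 \xleftarrow{g \mapsto (g,g^{-1})} [G/G] \to \star$. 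The pushforward to the point uses $\Aut(g) = C_G(g)$ and the orbit-stabilizer identity $|C_G(g)|^{-1}=|[g]|/|G|$ to yield $\frac{1}{|G|}\sum_{g\in G} \varphi_1(g)\varphi_2(g^{-1})$, matching $\beta$.

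The main technical obstacle is bookkeeping the fiber groupoid computations $f^{-1}(\cdot)$ (identifying their skeleta and the automorphism groups) correctly, since this is where the factors of $1/|G|$ or $1/|C_G(g)|$ arise and must match the corresponding normalizations in the classical Frobenius-algebra operations on $R_\CC(G)$. Once $\mu$, $\eta$ and $\beta$ have been verified to agree on generators of the monoidal category $\Bord_2$, the uniqueness half of Theorem \ref{thm:equivalence_tqft_frobenius} (together with the fact that $\delta$, $\varepsilon$ and $\gamma$ are determined by $\mu$ and $\beta$ in any commutative Frobenius algebra) promotes \eqref{eq:isomorphism_representation_ring_equivariant_functions_G} to a natural isomorphism $Z^\#_G \cong Z_G^\Fr$ of $2$-dimensional TQFTs.
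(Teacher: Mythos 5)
Your proposal is correct and takes essentially the same approach as the paper: identify $Z^\#_G(S^1)$ with $R_\CC(G)$ via \eqref{eq:isomorphism_representation_ring_equivariant_functions_G}, compute the spans associated to the multiplication, unit and pairing bordisms, check they realize $\mu$, $\eta$ and $\beta$, and conclude by the classification Theorem \ref{thm:equivalence_tqft_frobenius}. Your explicit fiber-groupoid cardinality computations merely spell out the `direct check' the paper leaves implicit.
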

\begin{proof}
Since both $Z^\#_G$ and $Z_G^\Fr$ are TQFTs, by Theorem \ref{thm:equivalence_tqft_frobenius}, it is enough to compute the Frobenius algebra associated to $Z^\#_G(S^1)$ and to check that it is isomorphic to $R_\CC(G)$ through \ref{eq:isomorphism_representation_ring_equivariant_functions_G}. From (\ref{eq:isomorphism_representation_ring_equivariant_functions_G}), we know the underlying vector spaces are isomorphic. For the rest of the structure, we have
\begin{itemize}
    \item Ring structure. It is given by the image of $W = \bdmultiplication[0.7]$. The field theory $\mathcal{F}^\#_G(W)$ is given by
        \[ \begin{tikzcd}[column sep=4em] {[G/G]^2} & {[G^2 / G]} \arrow[swap]{l}{\pi_1 \times \pi_2} \arrow{r}{m} & {[G/G]} \end{tikzcd} \]
        where $m : G \times G \to G$ is the multiplication map of $G$. Hence, the morphism $Z^\#_G(W) : R_\CC(G) \otimes_\CC R_\CC(G) \to R_\CC(G)$ is given by $a \otimes b \mapsto m_! (\pi_1 \times \pi_2)^* (a \otimes b)$ which a direct check shows that it is precisely $\mu(a \otimes b)$. 
    \item Bilinear form. It is given by $W = \bdbeta[0.7]$. The field theory is given by
    \[ \begin{tikzcd}[column sep=4em] {[G/G]^2} & {[G^2 / G]} \arrow[swap]{l}{\pi_1 \times \pi_2} \arrow{r}{c} & \star, \end{tikzcd} \]
    where $c$ is the terminal map. Hence, the morphism $Z^\#_G(W) : R_\CC(G) \otimes_\CC R_\CC(G) \to \CC$ is given by $a \otimes b \mapsto c_! (\pi_1 \times \pi_2)^* (a \otimes b)$ which is precisely $\beta(a \otimes b) = \mu(a \otimes b)(1)$. 
    \item Unit. It is given by the image of $\bdunit[0.7]$. The corresponding field theory is given by
    \[ \begin{tikzcd}[column sep=4em] {[G/G]} & {[\BG]} \arrow[swap]{l}{e} \arrow{r}{c} & \star \end{tikzcd} \]
    where the map $e$ is induced by the embedding of the identity element $\star\to G$. Hence, the corresponding morphism $Z^\#_G(W):\CC\to R_\CC(G)$ is the unit $\eta$. \qedhere
\end{itemize}
\end{proof}

\begin{remark}
    The use of Theorem \ref{thm:equivalence_tqft_frobenius} can be avoided in Proposition \ref{prop:equiv-frobenius} if instead we check that under the natural identification $Z_G^\#(S^1) = R_\CC(G)$, we have that the images of the generating bordisms
    \[ \bdunit, \qquad \bdcounit, \qquad \bdmultiplication, \qquad \bdcomultiplication, \qquad \bdtwist, \]
    coincide with the ones of $Z_G^\Fr$. This can be checked through a straightforward calculation similar to the ones above.
\end{remark}

\begin{remark}
    Putting together Propositions \ref{prop:equiv-frobenius}, \ref{prop:invariant-arithmetic} and \ref{prop:count-character-groupoid}, we can re-interpret the left-hand side of Frobenius formula (\ref{eq:frobenius_formula}) in a more natural way. Indeed, the mysterious quotient $|R_G(\Sigma)|/|G|$ should be understood as the point count of the character groupoid $|\mathfrak{X}_G(\Sigma)|$.
\end{remark}

\subsection{Comparison with the character stack TQFT}
\label{sec:arithmetic-tqft-character-stack}

In this section, we prove Theorem \ref{introthm:nattrans}. Throughout this section, fix an algebraic group $G$ defined over a finitely generated ring $R$ (typically, $R = \ZZ$ for $G = \GL_n, \SL_n$). Furthermore, let $\FF_q$ be a finite field \emph{extending} $R$, that is, fix a ring morphism $R \to \FF_q$. Such a morphism allows us to talk about the $\FF_q$-rational points of any stack $\mathfrak{X}$ over $R$, denoted $\mathfrak{X}(\FF_q)$.

Now, the bridge from the geometric to the arithmetic world is given by counting $\FF_q$-rational points.

\begin{definition}\label{defn:integral-fibers}
    For any stack $\mathfrak{X}$ over $R$, define the map
    \[ \int_\mathfrak{X} : \K(\Stck_\mathfrak{X}) \to \CC^{\mathfrak{X}(\FF_q)}, \quad [\mathfrak{Y} \xrightarrow{f} \mathfrak{X}] \mapsto (x \mapsto |f^{-1}(x)|) , \]
    where $|f^{-1}(x)|$ denotes the groupoid cardinality of $f^{-1}(x)$ as in Definition \ref{defn:groupoid-cardinality}. %
     Note that $\int_\mathfrak{X}$ can be thought of as integration along the fibers. This map is easily seen to be a ring homomorphism. 
\end{definition}

In particular, note that the map $\int_R : \K(\Stck_R) \to \CC^{R(\FF_q)} = \CC$ induces a restriction-of-scalars functor $(\int_R)^* : \Vect_\CC \to \K(\Stck_R)\textup{-}\Mod$. This allows us to compare the geometric and arithmetic quantization functors.

\begin{proposition}\label{prop:comparisontqft_arith}
    The morphisms $\int_\mathfrak{X}$ define a natural transformation
    \[ \int : \mathcal{Q} \Rightarrow \left(\int_R\right)^* \circ \mathcal{Q}^\# \circ (-)(\FF_q) . \]
    In particular, this induces a natural transformation of TQFTs
    \[ \int : Z_G \Rightarrow \left(\int_R\right)^* \circ \mathcal{Q}^\# \circ (-)(\FF_q) \circ \mathcal{F}_G. \]
    
\end{proposition}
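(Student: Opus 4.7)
The plan is to verify, for every span $\mathfrak{X} \xleftarrow{f} \mathfrak{Z} \xrightarrow{g} \mathfrak{Y}$ in $\Stck_R$, the commutativity of the naturality square
\[
\begin{tikzcd}
\K(\Stck_\mathfrak{X}) \arrow{r}{g_! \circ f^*} \arrow{d}{\int_\mathfrak{X}} & \K(\Stck_\mathfrak{Y}) \arrow{d}{\int_\mathfrak{Y}} \\
\CC^{\mathfrak{X}(\FF_q)} \arrow{r}{g_! \circ f^*} & \CC^{\mathfrak{Y}(\FF_q)}.
\end{tikzcd}
\]
Both $\mathcal{Q}$ and the composite on the right treat $2$-morphisms trivially since their targets carry only identity $2$-cells, so the $2$-categorical coherence conditions are automatic and only this $1$-categorical square must be checked, together with $\K(\Stck_R)$-linearity of each component $\int_\mathfrak{X}$. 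The latter is immediate: for $[\mathfrak{T}] \in \K(\Stck_R)$ and $[\mathfrak{W} \to \mathfrak{X}]$, the fiber of $\mathfrak{T} \times_R \mathfrak{W} \to \mathfrak{X}$ over $x$ is the product of $\mathfrak{T}(\FF_q)$ with the fiber of $\mathfrak{W}$ over $x$, so its groupoid cardinality factors as $|\mathfrak{T}(\FF_q)| = \int_R([\mathfrak{T}])$ times the cardinality of the fiber of $\mathfrak{W}$.

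I would then decompose the naturality square into two separate compatibilities: pullback compatibility, $\int_\mathfrak{Z} \circ f^* = f^* \circ \int_\mathfrak{X}$ for $f : \mathfrak{Z} \to \mathfrak{X}$, and pushforward compatibility, $\int_\mathfrak{Y} \circ g_! = g_! \circ \int_\mathfrak{Z}$ for $g : \mathfrak{Z} \to \mathfrak{Y}$. The first is straightforward: for a generator $[\mathfrak{W} \to \mathfrak{X}]$, the groupoid fiber of the pullback $\mathfrak{W} \times_\mathfrak{X} \mathfrak{Z} \to \mathfrak{Z}$ over $z \in \mathfrak{Z}(\FF_q)$ is canonically equivalent, by the universal property of fiber products, to the fiber of $\mathfrak{W} \to \mathfrak{X}$ over $f(z)$, so both have the same groupoid cardinality.

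The main technical content lies in the pushforward compatibility, which is a Fubini-type statement for groupoid cardinality. Given $[\mathfrak{W} \xrightarrow{h} \mathfrak{Z}]$, one side evaluates at $y \in \mathfrak{Y}(\FF_q)$ to the cardinality of $(g \circ h)^{-1}(y) \cong \mathfrak{W} \times_\mathfrak{Z} g^{-1}(y)$; the other side gives the weighted sum $\sum_{[(z,\alpha)] \in [g^{-1}(y)]} |h^{-1}(z)|/|\Aut(z,\alpha)|$ by the explicit formula for $g_!$ on complex-valued functions. The equality follows by stratifying the $\FF_q$-points of $(g \circ h)^{-1}(y)$ according to the isomorphism class of their image in $g^{-1}(y)$ and tracking how $\Aut(z,\alpha)$ acts on $h^{-1}(z)$---essentially the same orbit-stabilizer bookkeeping already invoked in the proof that $\mathcal{Q}^\#$ preserves span composition. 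This accounting of automorphism groups is the main obstacle; well-definedness of $\int_\mathfrak{X}$ on $\K(\Stck_\mathfrak{X})$, i.e.\ respect for the scissor relations, is then a formal consequence of additivity of groupoid cardinality under disjoint unions of fibers.

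Finally, the ``in particular'' statement follows by whiskering: since $Z_G = \mathcal{Q} \circ \mathcal{F}_G$ by construction, precomposing the natural transformation $\int$ with the geometric field theory $\mathcal{F}_G$ immediately yields the desired natural transformation of TQFTs.
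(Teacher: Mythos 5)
Your proposal is correct and follows essentially the same route as the paper: the naturality square for a span is split into the pullback compatibility $\int_\mathfrak{Z} \circ f^* = f^* \circ \int_\mathfrak{X}$, proved by identifying the fiber of the pulled-back stack over $z$ with the fiber over $f(z)$, and the pushforward compatibility $\int_\mathfrak{Y} \circ g_! = g_! \circ \int_\mathfrak{Z}$, which is exactly the paper's weighted-sum identity $\sum_{[(z,\alpha)] \in [g^{-1}(y)]} |j^{-1}(z)|/|\Aut(z,\alpha)| = |(g\circ j)^{-1}(y)|$ that you justify by the same groupoid-cardinality (orbit-stabilizer) bookkeeping used for $\mathcal{Q}^\#$. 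Your additional remarks on scissor relations, $\K(\Stck_R)$-linearity, and the whiskering step for the ``in particular'' statement are points the paper leaves implicit, and they are handled correctly.
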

\begin{proof}
    For any span $\mathfrak{X} \xleftarrow{f} \mathfrak{Z} \xrightarrow{g} \mathfrak{Y}$ of stacks over $R$, the relevant diagram of $\K(\Stck_R)$-modules is:
    \[ \begin{tikzcd}
        \K(\Stck_\mathfrak{X}) \arrow{r}{f^*} \arrow{d}{\int_\mathfrak{X}} & \K(\Stck_\mathfrak{Z}) \arrow{r}{g_!} \arrow{d}{\int_\mathfrak{Z}} & \K(\Stck_\mathfrak{Y}) \arrow{d}{\int_\mathfrak{Y}} \\
        \CC^{\mathfrak{X}(\FF_q)} \arrow{r}{f^*} & \CC^{\mathfrak{Z}(\FF_q)} \arrow{r}{g_!} & \CC^{\mathfrak{Y}(\FF_q)}
    \end{tikzcd} \]
    To show that the left square commutes, let $\mathfrak{T} \xrightarrow{j} \mathfrak{X}$ be a morphism of stacks and consider the cartesian diagram:
    \begin{equation*}\label{di:cartesianinstacks}
        \begin{tikzcd}
        \mathfrak{T}\times_{\mathfrak{X}}\mathfrak{Z} \arrow{r} \arrow{d}{h} & \mathfrak{T}\arrow{d}{j} \\
        \mathfrak{Z} \arrow{r}{f} & \mathfrak{X}
    \end{tikzcd}
    \end{equation*}
    Then, for any $\FF_q$-point $z \in \mathfrak{Z}(\FF_q)$ of $\mathfrak{Z}$, we find
    \[ \int_\mathfrak{Z}(f^*\mathfrak{T})(z) = |h^{-1}(z)| = |j^{-1}(f(z))| = f^* \left(\int_{\mathfrak{X}} \mathfrak{T} \right)(z) . \]
    To show that the right square commutes, let $\mathfrak{T} \xrightarrow{j} \mathfrak{Z}$ be a morphism of stacks. Then, for any $\FF_q$-point $y \in \mathfrak{Y}(\FF_q)$ of $\mathfrak{Y}$, we find
    \[ g_!\left(\int_\mathfrak{Z} \mathfrak{T}\right)(y) = \sum_{[(z, \alpha)] \in [g^{-1}(y)]} \frac{|j^{-1}(z)|}{|\Aut(z, \alpha)|} = |(g \circ j)^{-1}(y)| = \int_\mathfrak{Y}(g_!\mathfrak{T})(y) . \qedhere \]
\end{proof}

Next, let us show how the geometric and arithmetic field theories are related. Crucially, we need to assume that $G$ is connected.

\begin{proposition}
    \label{prop:compGconn}
    Suppose $G$ is connected. Then there is a natural isomorphism
    \[ (-)(\FF_q) \circ \mathcal{F}_G \cong \mathcal{F}^\#_{G(\FF_q)} . \]
    In particular, this induces a natural isomorphism of TQFTs
    \[ \mathcal{Q}^\# \circ (-)(\FF_q) \circ \mathcal{F}_G \cong Z^\#_{G(\FF_q)} . \]
\end{proposition}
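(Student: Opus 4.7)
The plan is to reduce the proposition to a pointwise statement about fundamental groupoids and then exploit Lang--Steinberg to identify the $\FF_q$-points of the character stack with the action groupoid that defines the arithmetic field theory.

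First I would unfold the definitions. Fix a compact manifold $M$ and a finitely generated groupoid $\Gamma$ equivalent to $\Pi(M)$, so that by definition $\mathfrak{X}_G(M) = [R_G(\Gamma)/\mathcal{G}_\Gamma]$ as a stack over $R$. By the standard description of $\FF_q$-points of a quotient stack, an object of $\mathfrak{X}_G(M)(\FF_q)$ is a pair $(P,\varphi)$ consisting of a $\mathcal{G}_\Gamma$-torsor $P$ over $\Spec \FF_q$ together with a $\mathcal{G}_\Gamma$-equivariant map $\varphi \colon P \to R_G(\Gamma)$, with morphisms the obvious equivariant maps of torsors over $R_G(\Gamma)$. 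On the other hand, the objects of $\mathfrak{X}_{G(\FF_q)}^\#(M) = \textbf{Fun}(\Gamma, G(\FF_q))$ are, up to equivalence, precisely the elements of $R_G(\Gamma)(\FF_q) = \Hom(\Gamma, G(\FF_q))$ with morphisms the local gauge transformations coming from $\mathcal{G}_\Gamma(\FF_q)$ (as discussed just before Proposition~\ref{prop:count-character-groupoid}).

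The key input is Lang's theorem: since $G$ is connected over the finite field $\FF_q$ (we use the base change $R \to \FF_q$), and $\mathcal{G}_\Gamma = \prod_{x\in \Gamma} G$ is a finite product of copies of a connected smooth algebraic group, $\mathcal{G}_\Gamma$ is itself connected, so $H^1(\Spec \FF_q, \mathcal{G}_\Gamma) = 0$. Therefore every $\mathcal{G}_\Gamma$-torsor over $\Spec \FF_q$ is trivial, and the groupoid of $\FF_q$-points of $[R_G(\Gamma)/\mathcal{G}_\Gamma]$ is naturally equivalent to the action groupoid $[R_G(\Gamma)(\FF_q)/\mathcal{G}_\Gamma(\FF_q)]$. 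This action groupoid is, by the discussion preceding Proposition~\ref{prop:count-character-groupoid}, exactly $\mathfrak{X}_{G(\FF_q)}^\#(\Gamma) \simeq \mathfrak{X}_{G(\FF_q)}^\#(M)$. The independence of the resulting equivalence from the chosen $\Gamma$ follows from Corollary~\ref{cor:fin-gen-character-stack} combined with Corollary~\ref{cor:equivalent_character_groupoids}.

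Next I would verify naturality with respect to bordisms. Given a bordism $W\colon M_1 \to M_2$ with inclusions $i_j\colon M_j \to W$, both field theories produce the span with apex the character stack/groupoid of $W$ and legs the pullback maps $i_j^*$. The pullback maps are induced, on the level of functors of points, by precomposition with $\Pi(i_j)$, and this construction commutes strictly with the identifications of the previous paragraph; likewise for the $2$-cells coming from boundary-preserving diffeomorphisms. Monoidality follows because both sides convert disjoint unions to products of character objects and the trivialization of torsors is compatible with products. This gives the natural isomorphism $(-)(\FF_q)\circ \mathcal{F}_G \cong \mathcal{F}^\#_{G(\FF_q)}$ of symmetric monoidal $2$-functors, and postcomposing with $\mathcal{Q}^\#$ yields the claimed isomorphism of TQFTs.

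The main obstacle is the Lang--Steinberg step: the proposition genuinely fails for disconnected $G$, as Remark~\ref{rmk:comparison-stack-groupoid} illustrates for $G = \ZZ/2\ZZ$, so one must argue carefully that the connectedness of $G$ really does propagate to the local gauge group $\mathcal{G}_\Gamma$ and suffices to trivialize all torsors appearing. Once that vanishing is in hand, the rest of the argument is essentially a formal unravelling of the definitions.
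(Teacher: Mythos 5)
Your proof is correct and follows essentially the same route as the paper: the key step in both is Lang's theorem, used to trivialize all torsors over $\Spec(\FF_q)$ and thereby identify the groupoid $\mathfrak{X}_G(M)(\FF_q)$ with the action groupoid defining $\mathfrak{X}^\#_{G(\FF_q)}(M)$, after which naturality and monoidality are formal. The only cosmetic difference is that the paper first reduces to connected $M$ via monoidality (so the gauge group is a single copy of $G$), whereas you apply Lang directly to the finite product $\mathcal{G}_\Gamma=\prod_{x\in\Gamma}G$, which is equally valid since a finite product of connected groups is connected.
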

\begin{proof}
    Since the field theories $\mathcal{F}_G$ and $\mathcal{F}^\#_{G(\FF_q)}$, see Definition \ref{def:geometric_field_theory} and \ref{def:arithmetic_field_theory}, are constructed completely analogous, it suffices to give a natural isomorphism between the groupoid of $\FF_q$-rational points of the character stack $\mathcal{F}_G(M)(\FF_q) = \mathfrak{X}_G(M)(\FF_q)$ and the character groupoid $\mathcal{F}_G^{\#}(M) = \mathfrak{X}_{G(\FF_q)}(M)$. Since both field theories are monoidal, we may assume that $M$ is connected, so $\mathfrak{X}_{G}(M) = [R_G(M) / G]$.
    In this case, the objects of $\mathfrak{X}_G(M)(\FF_q)$ are $G$-principal bundles $P \to \Spec(\FF_q)$ with an equivariant map $P \to R_G(M)$. However, by Lang's theorem \cite{Lang1956} for connected groups, every $G$-principal bundle over $\Spec(\FF_q)$ is trivial, so $P = \Spec(\FF_q) \times G$ and the equivariant map is the same as a map $\Spec(\FF_q) \to R_G(M)$. Hence, the objects of $\mathfrak{X}_G(M)(\FF_q)$ are $\Hom(\Spec(\FF_q), R_G(M)) = R_G(M)(\FF_q) = R_{G(\FF_q)}(M)$, by Definition \ref{defn:representation-variety}. The morphisms between these objects are given by the action of $G(\FF_q)$ on $R_{G(\FF_q)}(M)$ by conjugation. But this is exactly the action stack $[R_{G(\FF_q)}(M) / G(\FF_q)]$, which is precisely the character groupoid $\mathfrak{X}_{G(\FF_q)}(M)$.
\end{proof}

\begin{example}
     If $G$ is not connected, there cannot even be a natural transformation $(-)(\FF_q) \circ \mathcal{F}_G \Rightarrow \mathcal{F}^\#_{G(\FF_q)}$. For instance, consider the $2$-sphere $S^2$ as a bordism $\varnothing \to \varnothing$. On one hand, since $S^2$ is simply connected we have that $\mathcal{F}^\#_{G(\FF_q)}(S^2)$ is the action groupoid $\mathfrak{X}_{G(\FF_q)}(\star) = [\star / G(\FF_q)]$, which is a groupoid with a single object and $|G(\FF_q)|$ automorphisms. On the other hand, the character stack of $S^2$ is $\mathfrak{X}_G(S^2) = \BG$, so $((-)(\FF_q) \circ \mathcal{F}_G)(S^2) = \BG(\FF_q)$. These two groupoids are not in general equivalent, as for $G = \ZZ / 2\ZZ$.
\end{example}

\begin{corollary}\label{cor:natural_transformation_geometric_arithmetic}
    Suppose $G$ is connected. Then there is a natural transformation of TQFTs
    \[ Z_G \Rightarrow \left(\int_R\right)^* \circ Z^\#_{G(\FF_q)} . \qedhere \]
\end{corollary}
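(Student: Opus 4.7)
The plan is to obtain this corollary by formally composing the two natural transformations established in Proposition \ref{prop:comparisontqft_arith} and Proposition \ref{prop:compGconn}. In other words, the real content has already been proved, and what remains is to paste the pieces together and observe that the composition lands exactly where the statement claims.

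First, I would recall the decompositions $Z_G = \mathcal{Q} \circ \mathcal{F}_G$ and $Z^\#_{G(\FF_q)} = \mathcal{Q}^\# \circ \mathcal{F}^\#_{G(\FF_q)}$. Applying Proposition \ref{prop:comparisontqft_arith} to the field theory $\mathcal{F}_G$ yields a natural transformation
\[ \int \circ \mathcal{F}_G \; : \; Z_G = \mathcal{Q} \circ \mathcal{F}_G \;\Longrightarrow\; \left(\int_R\right)^* \circ \mathcal{Q}^\# \circ (-)(\FF_q) \circ \mathcal{F}_G, \]
obtained by whiskering. Next, I would invoke the connectedness hypothesis on $G$ via Proposition \ref{prop:compGconn}, which produces a natural isomorphism $(-)(\FF_q) \circ \mathcal{F}_G \cong \mathcal{F}^\#_{G(\FF_q)}$; whiskering this by $\mathcal{Q}^\#$ on the left and by $(\int_R)^*$ further on the left gives a natural isomorphism
\[ \left(\int_R\right)^* \circ \mathcal{Q}^\# \circ (-)(\FF_q) \circ \mathcal{F}_G \;\cong\; \left(\int_R\right)^* \circ \mathcal{Q}^\# \circ \mathcal{F}^\#_{G(\FF_q)} \;=\; \left(\int_R\right)^* \circ Z^\#_{G(\FF_q)}. \]
Composing the natural transformation with this natural isomorphism gives the desired natural transformation $Z_G \Rightarrow \left(\int_R\right)^* \circ Z^\#_{G(\FF_q)}$.

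The only thing requiring a mild verification is that the composite is indeed a natural transformation of TQFTs, i.e.\ that it is compatible with the (lax) monoidal structure. This is automatic since both Proposition \ref{prop:comparisontqft_arith} and Proposition \ref{prop:compGconn} produce transformations that intertwine the relevant monoidal structures, and whiskering preserves this compatibility. Because the argument is purely formal (a combination of whiskering and horizontal composition in the $2$-category of $2$-functors), there is no genuine obstacle; the substantive work was already carried out in establishing the two ingredients, whose proofs rely respectively on the base-change and push-pull compatibility of fiber-counting and on Lang's theorem for connected algebraic groups over finite fields.
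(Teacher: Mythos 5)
Your proposal is correct and follows exactly the paper's intended argument: the corollary is obtained by whiskering the natural transformation of Proposition \ref{prop:comparisontqft_arith} with $\mathcal{F}_G$ and composing with the natural isomorphism of Proposition \ref{prop:compGconn} (whiskered by $\mathcal{Q}^\#$ and $\left(\int_R\right)^*$), precisely as summarized in diagram \eqref{eq:diagram_natural_transformations}. No gaps; the substantive content indeed resides in the two cited propositions.
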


The situation can be summarized by the following diagram.
\begin{equation}
    \label{eq:diagram_natural_transformations}
    \begin{tikzcd}[execute at end picture={
        \node at (1.5, 0) {$\Downarrow$};
        \node at (-2.0, 0) {$\Downarrow\!\wr$};
    }]
        & \operatorname{Span}(\Stck_k) \arrow{r}{\mathcal{Q}} \arrow{dd}{(-)(\FF_q)} & \K(\Stck_k)\textup{-}\Mod \\
        \Bord_n \arrow{ur}[name=FG]{\mathcal{F}_G} \arrow[swap]{dr}[name=FGk]{\mathcal{F}^\#_{G(\FF_q)}} & & \\
        & \operatorname{Span}(\FinGrpd) \arrow[swap]{r}{\mathcal{Q}^\#} & \Vect_\CC \arrow[swap]{uu}{\left(\int_R\right)^*}
    \end{tikzcd}
\end{equation}

From these results, we observe that for $G$ non-connected, the two functors $Z_G^\# = \mathcal{Q}^\# \circ \mathcal{F}_{G(\FF_q)}^\#$ and $\mathcal{Q}^\# \circ (-)(\FF_q) \circ \mathcal{F}_{G}$ are different TQFTs. For further reference, we shall give a name to the later functor, which will play an important role for Landau-Ginzburg models.

\begin{definition}
The TQFT
\[ Z_G^{\FF_q} = \mathcal{Q}^\# \circ (-)(\FF_q) \circ \mathcal{F}_{G}: \Bord_n \to \Vect_\CC \]
will be called the \emph{TQFT of $\FF_q$-closed points}.
\end{definition}

Note that, regardless of whether $G$ is connected, there always is a natural transformation $Z_G \Rightarrow Z_G^{\FF_q}$ due to Proposition \ref{prop:comparisontqft_arith}. When $G$ is connected, we have a natural isomorphism $Z_G^{\FF_q} \cong Z_{G(\FF_q)}^{\#}$ ($\cong Z_{G(\FF_q)}^{\Fr}$).

\section{Landau--Ginzburg models TQFT}
\label{sec:tqftLG}

In this section, we give a different point of view on the natural transformations given in Corollary \ref{cor:natural_transformation_geometric_arithmetic} by generalizing Theorem \ref{thm:character_stack_TQFT} to Landau--Ginzburg models. The generalization is relatively straightforward, we will only sketch the key steps. We begin with the definition of Landau--Ginzburg models.

\subsection{Landau--Ginzburg models}

Let $\mathfrak{S}$ be a finite type algebraic stack over a field $k$ (with affine stabilizers). A Landau--Ginzburg (LG) model over $\mathfrak{S}$ is an algebraic stack $\mathfrak{X}$ (with affine stabilizers) over $\mathfrak{S}$ equipped with a function called the potential. In this paper, we will consider two cases, namely (1) when the potential is algebraic (see Definition \ref{def:lgmodel}), and (2) when the potential can be any function on the closed points (see Definition \ref{def:lgmodel2}). Until Section \ref{sec:LGarith}, we will use the former definition, however, the results of this section hold for both definitions.

\begin{definition}\label{def:lgmodel}
The category $\LG_\mathfrak{S}$ of \emph{Landau--Ginzburg models} over $\mathfrak{S}$ is the category
\begin{itemize} 
\item whose objects are pairs $(\mathfrak{X}\rightarrow \mathfrak{S},f)$ where $\mathfrak{X}$ is an algebraic stack over $\mathfrak{S}$ of finite type (with affine stabilizers) and $f:\mathfrak{X}\to \AA^1_k$ is a map of algebraic stacks, and
\item morphisms between the objects $(\mathfrak{X}\to \mathfrak{S}, f)$ and $(\mathfrak{Y}\to \mathfrak{S}, g)$ are morphisms $\pi:\mathfrak{X}\to \mathfrak{Y}$ commuting with the structure maps (a) $\mathfrak{X}\to \mathfrak{S}$ and $\mathfrak{Y}\to \mathfrak{S}$, and (b) $\mathfrak{X}\to \AA^1_k$ and $\mathfrak{Y}\to \AA^1_k$.
\end{itemize}
\end{definition}

The definition above yields the definition of the Grothendieck ring of Landau--Ginzburg models.

\begin{definition}
    Let $\mathfrak{S}$ be an algebraic stack of finite type over a field $k$ (with affine stabilizers). The \emph{Grothendieck ring of Landau--Ginzburg models} over $\mathfrak{S}$, denoted $\K(\LG_\mathfrak{S})$, is defined as the quotient of the free abelian group on the set of isomorphism classes of Landau--Ginzburg models over $\mathfrak{S}$, by relations of the form
    \[ [(\mathfrak{X}\to \mathfrak{S}, f)] = [(\mathfrak{Z}\to \mathfrak{S}, f|_{\mathfrak{Z}})] + [(\mathfrak{U}\to \mathfrak{S}, f|_{\mathfrak{U}})] \]
    where $\mathfrak{Z}$ is a closed substack of $\mathfrak{X}$ and $\mathfrak{U}$ is its open complement equipped with the restrictions of the regular functions $f$. 
\end{definition}

Note, that the Grothendieck ring of Landau--Ginzburg models is indeed a ring, the multiplication is induced by the fibre product
\[ [(\mathfrak{X} \to \mathfrak{S}, f)] \cdot [(\mathfrak{Y}\to \mathfrak{S}, g)] = [(\mathfrak{X} \times_{\mathfrak{S}} \mathfrak{Y})\to \mathfrak{S}, h)].\]
Here $h$ is defined as 
the sequence of maps
\[h: (\mathfrak{X} \times_{\mathfrak{S}} \mathfrak{Y})\xrightarrow{(f,g)} \AA^1_k\times \AA^1_k\xrightarrow{\cdot} \AA^1_k\]
where the last map uses the multiplicative structure on $\AA^1_k$. This multiplication is indeed associative and commutative with the identity element being $[(\mathfrak{S}\xrightarrow{\id} \mathfrak{S}, 1)]$.

The following lemma relates the rings $\K(\LG_S)$ and $\K(\Stck_S)$ and it is straightforward to prove.

\begin{lemma}\label{lem:forgetfulLG}
The constant function map
\[\iota:\K(\Stck_\mathfrak{S})\rightarrow \K(\LG_\mathfrak{S})\]
induced by the map on virtual classes $[\mathfrak{X}\to \mathfrak{S}]\mapsto [(\mathfrak{X}\to \mathfrak{S}, 1)]$ is a ring homomorphism, making $\K(\LG_\mathfrak{S})$ a $\K(\Stck_\mathfrak{S})$-algebra.

Similarly, the forgetful map
\[\stI:\K(\LG_\mathfrak{S})\to \K(\Stck_\mathfrak{S})\]
induced by the map on virtual classes $[(\mathfrak{X}\to \mathfrak{S}, f)]\mapsto [\mathfrak{X}\to \mathfrak{S}]$ is a ring homomorphism as well.\qed
\end{lemma}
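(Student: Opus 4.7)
The plan is to verify, for each of the two maps, the three standard requirements to be a ring homomorphism: that the map descends from the free abelian group on isomorphism classes to the quotient by the scissor relations, that it respects the multiplicative structure induced by fiber product over $\mathfrak{S}$, and that it sends the multiplicative unit to the multiplicative unit. Additivity is automatic since both maps are defined on generators by a rule that manifestly extends $\ZZ$-linearly.

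For the constant-function map $\iota$, well-definedness on scissor relations is immediate: given a closed substack $\mathfrak{Z}\subset\mathfrak{X}$ with open complement $\mathfrak{U}$, the constant function $1$ on $\mathfrak{X}$ restricts to the constant function $1$ on both $\mathfrak{Z}$ and $\mathfrak{U}$, so the scissor relation $[\mathfrak{X}\to\mathfrak{S}] = [\mathfrak{Z}\to\mathfrak{S}]+[\mathfrak{U}\to\mathfrak{S}]$ is sent to the corresponding LG scissor relation. Multiplicativity follows from the definition of the product in $\K(\LG_\mathfrak{S})$: $\iota([\mathfrak{X}])\cdot\iota([\mathfrak{Y}])=[(\mathfrak{X}\times_\mathfrak{S}\mathfrak{Y},1\cdot 1)]=[(\mathfrak{X}\times_\mathfrak{S}\mathfrak{Y},1)]=\iota([\mathfrak{X}]\cdot[\mathfrak{Y}])$, using that the potential of the product is the pointwise product of the two potentials pulled back to $\mathfrak{X}\times_\mathfrak{S}\mathfrak{Y}$, and $1\cdot 1=1$. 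The unit $[\id_\mathfrak{S}]$ in $\K(\Stck_\mathfrak{S})$ is sent to $[(\id_\mathfrak{S},1)]$, which is the unit in $\K(\LG_\mathfrak{S})$.

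For the forgetful map $\stI$, well-definedness is equally transparent: forgetting the potential turns an LG scissor relation $[(\mathfrak{X},f)]=[(\mathfrak{Z},f|_\mathfrak{Z})]+[(\mathfrak{U},f|_\mathfrak{U})]$ into the stack-level scissor relation $[\mathfrak{X}\to\mathfrak{S}]=[\mathfrak{Z}\to\mathfrak{S}]+[\mathfrak{U}\to\mathfrak{S}]$. Multiplicativity is immediate from the definitions of the products: the underlying stack of $[(\mathfrak{X},f)]\cdot[(\mathfrak{Y},g)]=[(\mathfrak{X}\times_\mathfrak{S}\mathfrak{Y},h)]$ is $\mathfrak{X}\times_\mathfrak{S}\mathfrak{Y}$, and this is, by definition, the representative of $[\mathfrak{X}]\cdot[\mathfrak{Y}]$ in $\K(\Stck_\mathfrak{S})$. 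Finally, $\stI$ sends the unit $[(\id_\mathfrak{S},1)]$ to $[\id_\mathfrak{S}]$.

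There is no real obstacle in this argument; the only point that requires even momentary care is recording that the product of potentials on a fiber product reduces, in both unit cases, to a constant function $1$, which is what makes $\iota$ multiplicative and lets both maps respect units. Once the definitions are unfolded the statement is a direct bookkeeping check.
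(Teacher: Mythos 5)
Your verification is correct and is exactly the routine definition-unfolding the paper has in mind — indeed, the paper states this lemma without proof, declaring it straightforward, and your checks (compatibility with scissor relations, multiplicativity via the fiber-product description of the products, and preservation of units) fill in precisely that bookkeeping. Nothing is missing; the $\K(\Stck_\mathfrak{S})$-algebra structure on $\K(\LG_\mathfrak{S})$ then follows automatically since $\iota$ is a ring homomorphism into a commutative ring.
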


As in Section \ref{sec:groth}, a morphism of stacks $\mathfrak{X}\to \mathfrak{S}$ induces a $\K(\LG_\mathfrak{S})$-module structure on $\K(\LG_\mathfrak{X})$ defined on the classes as follows. Let $[(\mathfrak{Y}\to \mathfrak{X}, f:\mathfrak{Y}\to \AA^1_k)]$, $[(\mathfrak{T}\to \mathfrak{S}, f:\mathfrak{T}\to \AA^1_k)]$ be two classes, then we define the action as
\[[(\mathfrak{T}\to \mathfrak{S}, f:\mathfrak{T}\to \AA^1_k)]\cdot [(\mathfrak{Y}\to \mathfrak{X}, g:\mathfrak{Y}\to \AA^1_k)]:=[(\mathfrak{Y}\times_\mathfrak{X} (\mathfrak{T}\times_\mathfrak{S} \mathfrak{X}), h:(\mathfrak{Y}\times_\mathfrak{X} (\mathfrak{T}\times_\mathfrak{S} \mathfrak{X})\to \AA^1_k]\]
where $h=\tilde{f}\cdot\tilde{g}$ illustrated in the diagram below. 
\[ \begin{tikzcd}[column sep=3em, row sep=3em]
    \mathfrak{Y}\times_\mathfrak{X} (\mathfrak{T}\times_\mathfrak{S} \mathfrak{X}) \arrow[dd, bend right=60, "\tilde{g}"] \arrow[rrr, bend left=60, "\tilde{f}"] \arrow[r] \arrow[d] & \mathfrak{T}\times_\mathfrak{S} \mathfrak{X} \arrow[r] \arrow[d] & \mathfrak{T} \arrow[r, "f"] \arrow[d] & \mathbb{A}^1_k \\
    \mathfrak{Y} \arrow[r] \arrow[d, "g"] & \mathfrak{X} \arrow[r] & \mathfrak{S} & \\
    \mathbb{A}^1_k & & &
\end{tikzcd} \]

It is easy to see that this action extends to a $\K(\LG_\mathfrak{S})$-module structure on $\K(\LG_\mathfrak{X})$.

Furthermore, any morphism of stacks $a : \mathfrak{X} \to \mathfrak{Y}$ over $\mathfrak{S}$ yields a functor
\begin{align*}
     \LG_\mathfrak{X} &\to \LG_\mathfrak{Y} \\
    (\mathfrak{Z} \xrightarrow{h} \mathfrak{X}, f:\mathfrak{Z}\to \AA^1_k) & \mapsto (\mathfrak{Z}\xrightarrow {a \circ h} \mathfrak{Y}, f:\mathfrak{Z}\to \AA^1_k)
\end{align*}
inducing a $\K(\LG_\mathfrak{S})$-module map $a_!: \K(\LG_\mathfrak{X})\to \K(\LG_\mathfrak{Y})$. Note that this map will in general not be a ring morphism.

Similarly, pulling back along $a$ yields a functor
\[\LG_\mathfrak{Y} \to \LG_\mathfrak{X} \]
sending $(\mathfrak{Z}\to \mathfrak{Y}, f:\mathfrak{Z}\to \AA^1_k)$ to $(\mathfrak{Z} \times_\mathfrak{Y} \mathfrak{X}\to \mathfrak{X}, \tilde{f})$ illustrated in the diagram below. 
\[ \begin{tikzcd}
    \mathfrak{Z} \times_\mathfrak{Y} \mathfrak{X} \arrow{r} \arrow{d} \arrow[bend left=20]{rr}{\overline{f}} & \mathfrak{Z} \arrow[swap]{r}{f} \arrow{d} & \AA^1_k \\ \mathfrak{X} \arrow{r}{a} & \mathfrak{Y} &
\end{tikzcd} \]
This induces a ring homomorphism
\[ a^* : \K(\LG_\mathfrak{Y}) \to \K(\LG_\mathfrak{X}) , \]
 which is also a $\K(\LG_\mathfrak{S})$-algebra map.

\subsection{Landau--Ginzburg TQFT}
We alter the construction of Section \ref{sec:field_theory_and_quantization} to establish a TQFT in values of Landau--Ginzburg models computing the virtual class of character stacks. Namely, the symmetric monoidal functor called \emph{field theory} 
\[ \mathcal{F} : \Bord_n \to \Span(\Stck_k)\]
is unchanged, defined as in Section \ref{sec:field_theory_and_quantization}. However, the \emph{quantization functor}
\[ \mathcal{Q}^\LG : \Span(\Stck_\BG) \to \K(\LG_k)\textup{-}\Mod \]
is altered accordingly by assigning to an object $\mathfrak{X}$ the $\K(\LG_k)$-module $\K(\LG_\mathfrak{X})$, and to a span $\mathfrak{X} \xleftarrow{f} \mathfrak{Z} \xrightarrow{g} \mathfrak{Y}$ the morphism $g_! \circ f^* : \K(\LG_\mathfrak{X}) \to \K(\LG_\mathfrak{Y})$ of $\K(\LG_k)$-modules. As before, the quantization functor is not monoidal, however, it is symmetric and lax monoidal. The composition of the field theory and the quantization functor defines the TQFT in LG-models
\[ Z^\LG_G = \mathcal{Q}^\LG \circ \mathcal{F} : \Bord_n \to \K(\LG_k)\textup{-}\Mod . \]

This lax monoidal functor assigns to the empty set $\varnothing$ the ring $\K(\LG_k)$ and to a closed connected manifold $W$ of dimension $n$ thought of as a bordism $W : \varnothing \to \varnothing$ the ring endomorphism of $\K(\LG_k)$ that is multiplication by $[(\mathfrak{X}_G(W), \mathfrak{X}_G(W) \xrightarrow{1} \mathbb{A}_k^1)]$ yielding the following theorem.

\begin{theorem}
    \label{thm:tqft_LG}
    Let $G$ be an algebraic group. There exists a lax TQFT
    \[ Z^\LG_G : \Bord_n \to \K(\LG_k)\textup{-}\Mod \]
    computing the virtual classes of $G$-character stacks. \qed
\end{theorem}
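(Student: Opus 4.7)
The plan is to mimic the proof of Theorem \ref{thm:character_stack_TQFT}, substituting the category of Landau--Ginzburg models for the category of stacks in the quantization step. The field theory $\mathcal{F}_G : \Bord_n \to \Span(\Stck_k)$ constructed in Definition \ref{def:geometric_field_theory} can be reused verbatim, and Proposition \ref{prop:field_theory_symmetric_monoidal_functor} already shows it is a symmetric monoidal $2$-functor. Thus the theorem reduces to constructing a symmetric lax monoidal $2$-functor $\mathcal{Q}^\LG : \Span(\Stck_k) \to \K(\LG_k)\textup{-}\Mod$ and checking the quantization formula on a closed manifold.

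On objects I would send $\mathfrak{X} \mapsto \K(\LG_\mathfrak{X})$, which carries its natural $\K(\LG_k)$-module structure. On a span $\mathfrak{X} \xleftarrow{f} \mathfrak{Z} \xrightarrow{g} \mathfrak{Y}$ I would set $\mathcal{Q}^\LG = g_! \circ f^*$, where $f^*$ sends $(\mathfrak{U} \to \mathfrak{X}, \varphi)$ to the fiber product $(\mathfrak{U} \times_\mathfrak{X} \mathfrak{Z} \to \mathfrak{Z}, \overline{\varphi})$ with $\overline{\varphi}$ obtained by pulling $\varphi$ back along the projection, and $g_!$ post-composes the structure map with $g$ while leaving the potential unchanged. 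Both operations respect the scissor relations and hence descend to the Grothendieck groups: $f^*$ is a $\K(\LG_k)$-algebra map, while $g_!$ is a $\K(\LG_k)$-module map, so the composition is a well-defined module morphism.

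The main step is $2$-functoriality of $\mathcal{Q}^\LG$. On $1$-morphisms this amounts to a Beck--Chevalley-type base change identity: given composable spans $\mathfrak{X} \xleftarrow{f} \mathfrak{Z} \xrightarrow{g} \mathfrak{Y} \xleftarrow{f'} \mathfrak{Z}' \xrightarrow{g'} \mathfrak{W}$, using the cartesian square formed by $g$ and $f'$ one must verify $(f')^* \circ g_! = (\pi_{\mathfrak{Z}'})_! \circ (\pi_\mathfrak{Z})^*$. In the stack setting this follows from the universal property of fiber products, and the argument extends with no change to Landau--Ginzburg models because the potential travels by pullback along each leg of the square and taking fiber products commutes with composing to $\AA^1_k$. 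On $2$-morphisms of spans the cartesian-square argument from Section \ref{sec:field_theory_and_quantization} carries over unchanged: an isomorphism $h : \mathfrak{Z} \to \mathfrak{Z}'$ compatible with the legs yields $g_! f^* = g'_! h_! h^* (f')^* = g'_! (f')^*$. I expect this base-change bookkeeping with potentials to be the main technical obstacle, though no essentially new idea beyond the stack case is required.

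Finally, for the lax monoidal structure I would define the comparison map $\K(\LG_\mathfrak{X}) \otimes_{\K(\LG_k)} \K(\LG_\mathfrak{Y}) \to \K(\LG_{\mathfrak{X} \times \mathfrak{Y}})$ on generators by sending $[(\mathfrak{U} \to \mathfrak{X}, \varphi)] \otimes [(\mathfrak{V} \to \mathfrak{Y}, \psi)]$ to the external product of stacks paired with the product potential $\varphi \cdot \psi$ obtained via $\AA^1_k \times \AA^1_k \xrightarrow{\cdot} \AA^1_k$; a direct check shows this is well-defined, natural in spans and compatible with the symmetry, but as in the stack case it is not in general an isomorphism. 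Applying $Z^\LG_G = \mathcal{Q}^\LG \circ \mathcal{F}_G$ to a closed manifold $W : \varnothing \to \varnothing$, the field theory yields the span $\Spec k \xleftarrow{t} \mathfrak{X}_G(W) \xrightarrow{t} \Spec k$, so $Z^\LG_G(W)(1) = t_! t^*(1) = [(\mathfrak{X}_G(W) \to \Spec k, 1_{\mathfrak{X}_G(W)})] \in \K(\LG_k)$, exhibiting $Z^\LG_G$ as quantizing the virtual class of the $G$-character stack in the Grothendieck ring of Landau--Ginzburg models.
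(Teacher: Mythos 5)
Your proposal is correct and follows essentially the same route as the paper: the paper proves Theorem \ref{thm:tqft_LG} precisely by reusing the field theory $\mathcal{F}_G$ of Definition \ref{def:geometric_field_theory} and replacing the quantization functor with $\mathcal{Q}^\LG$, sending $\mathfrak{X}$ to $\K(\LG_\mathfrak{X})$ and a span to $g_!\circ f^*$ using the pushforward and pullback operations of Section \ref{sec:tqftLG}, then evaluating on a closed manifold to get $[(\mathfrak{X}_G(W),1)]$. Your explicit verification of the Beck--Chevalley base change with potentials and of the lax monoidal comparison map simply spells out details the paper leaves implicit by analogy with Section \ref{sec:field_theory_and_quantization}.
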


\subsubsection{Geometric method via LG-models}\label{sec:geommet}
In this short section, we show that the TQFT $Z_G$ defined in Theorem \ref{thm:character_stack_TQFT} is an instance of the Landau-Ginzburg TQFT.

Let $\mathfrak{S}$ be an algebraic stack of finite type over $k$. Consider the forgetful map
\[\stI: \K(\LG_\mathfrak{S})\rightarrow \K(\Stck_\mathfrak{S})
\]
sending the class of a Landau-Ginzburg model $[(\mathfrak{X}\to \mathfrak{S}, f:\mathfrak{X}\to \AA^1)]$ to the virtual class $[\mathfrak{X}\to \mathfrak{S}]\in \K(\RStck_\mathfrak{S})$ (see Lemma \ref{lem:forgetfulLG}). The following statement is immediate because of the forgetful nature of $\stI$.

\begin{theorem}
    \label{thm:geommet}
    The forgetful map induces a natural transformation $\stI:Z_G^{\LG} \Rightarrow Z_G$ as functors $\Bord_n\to \Ab$. In particular, if $W$ is a closed manifold, seen as a bordism $\varnothing\rightarrow \varnothing$, then 
    \[ \stI(Z_G^{\LG}(W))(1) = Z_G(W)(1) = [\mathfrak{X}_G(W)] \]
    in $\K(\Stck_k)$. \qed
\end{theorem}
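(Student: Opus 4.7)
The plan is to construct the natural transformation componentwise from the forgetful ring homomorphism $\stI : \K(\LG_\mathfrak{X}) \to \K(\Stck_\mathfrak{X})$ defined in Lemma \ref{lem:forgetfulLG}, and then verify that it is compatible with the quantization of bordisms. Since both $Z_G^\LG$ and $Z_G$ are obtained by composing the same geometric field theory $\mathcal{F}_G$ with their respective quantization functors $\mathcal{Q}^\LG$ and $\mathcal{Q}$, it suffices to produce a natural transformation $\mathcal{Q}^\LG \Rightarrow \mathcal{Q}$ between the two quantizations (regarded as functors landing in $\Ab$ via the forgetful functors from $\K(\LG_k)\textup{-}\Mod$ and $\K(\Stck_k)\textup{-}\Mod$). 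For an object $\mathfrak{X}$ of $\Span(\Stck_k)$, the component is simply the forgetful map $\stI_\mathfrak{X} : \K(\LG_\mathfrak{X}) \to \K(\Stck_\mathfrak{X})$ that drops the potential.

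For naturality on a span $\mathfrak{X} \xleftarrow{f} \mathfrak{Z} \xrightarrow{g} \mathfrak{Y}$ in $\Stck_k$, I need to check that the diagram
\[
\begin{tikzcd}
\K(\LG_\mathfrak{X}) \arrow{r}{f^*} \arrow[swap]{d}{\stI_\mathfrak{X}} & \K(\LG_\mathfrak{Z}) \arrow{r}{g_!} \arrow[swap]{d}{\stI_\mathfrak{Z}} & \K(\LG_\mathfrak{Y}) \arrow{d}{\stI_\mathfrak{Y}} \\
\K(\Stck_\mathfrak{X}) \arrow[swap]{r}{f^*} & \K(\Stck_\mathfrak{Z}) \arrow[swap]{r}{g_!} & \K(\Stck_\mathfrak{Y})
\end{tikzcd}
\]
commutes. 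The left square commutes because pullback of an LG-model $(\mathfrak{T} \to \mathfrak{X}, h)$ produces $(\mathfrak{T} \times_\mathfrak{X} \mathfrak{Z} \to \mathfrak{Z}, h \circ \pi)$, and forgetting the potential gives exactly $f^*$ applied to the underlying stack $[\mathfrak{T} \to \mathfrak{X}]$. The right square commutes because $g_!$ on LG-models is just post-composition of the structure map with $g$, leaving the potential untouched, so it agrees with $g_!$ on stacks after forgetting. Both squares respect the scissor and multiplicative relations, since $\stI$ is already a ring homomorphism.

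With the natural transformation $\stI : \mathcal{Q}^\LG \Rightarrow \mathcal{Q}$ in hand, whiskering with $\mathcal{F}_G$ yields $\stI : Z_G^\LG \Rightarrow Z_G$ as functors $\Bord_n \to \Ab$. For the concrete computation with a closed $n$-manifold $W : \varnothing \to \varnothing$, recall $\mathcal{F}_G(W)$ is the span $\Spec k \xleftarrow{t} \mathfrak{X}_G(W) \xrightarrow{t} \Spec k$. Applying $\mathcal{Q}^\LG$ gives the map $\K(\LG_k) \to \K(\LG_k)$ sending $1$ to $t_! t^*(1) = [(\mathfrak{X}_G(W) \to \Spec k,\, 1_{\mathfrak{X}_G(W)})]$. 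Applying $\stI$ then produces $[\mathfrak{X}_G(W)] \in \K(\Stck_k)$, which by Theorem \ref{thm:character_stack_TQFT} is precisely $Z_G(W)(1)$.

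The only potential subtlety I foresee is verifying that $\stI$ really is well defined as a ring homomorphism on the Grothendieck groups (i.e., that it respects the scissor relations and the multiplication via fibre product with trivial potential), but this is already covered by Lemma \ref{lem:forgetfulLG}. The rest is a straightforward diagram chase; no technical obstacle stands in the way.
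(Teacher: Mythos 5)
Your proof is correct and follows exactly the route the paper intends: the paper declares the statement ``immediate because of the forgetful nature of $\stI$,'' and your argument simply spells out that immediacy by checking that $\stI$ commutes with $f^*$ and $g_!$ on the quantization functors and then whiskering with $\mathcal{F}_G$. The closing computation for a closed bordism $W$ matches the paper's proof of Theorem \ref{thm:character_stack_TQFT}, so nothing is missing.
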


\subsubsection{Arithmetic method via LG-models}\label{sec:LGarith}

In this section, we compare the Landau-Ginzburg TQFT with the arithmetic TQFT. We first alter the definition of a Landau-Ginzburg model (LG-model for short) as follows (see Definition \ref{def:lgmodel}).

\begin{definition}\label{def:lgmodel2}
The category $\LG_\mathfrak{S}$ of Landau-Ginzburg models over an algebraic stack $\mathfrak{S}$ of finite type over a finite field $k$ (with affine stabilizers) is the category
\begin{itemize} 
\item whose objects are pairs $(\mathfrak{X}\rightarrow \mathfrak{S},f)$ where $\mathfrak{X}$ is afinite type algebraic stack over $\mathfrak{S}$ (with finite stabilizers) and $f:\mathfrak{X}(k)\to \CC$ is any $\CC$-valued function on the $k$-points of $\mathfrak{X}$, and
\item morphisms between the objects $(\mathfrak{X}\to \mathfrak{S}, f)$ and $(\mathfrak{Y}\to \mathfrak{S}, g)$ are algebraic morphisms $\pi:\mathfrak{X}\to \mathfrak{Y}$ commuting with the structure maps (a) $\mathfrak{X}\to \mathfrak{S}$ and $\mathfrak{Y}\to \mathfrak{S}$, and the maps on $k$-points (b) $f:\mathfrak{X}(k)\to \CC$ and $g:\mathfrak{Y}(k)\to \CC$.
\end{itemize}

\end{definition}

We consider the integration over the fibres map 
\[\int:\LG_\mathfrak{X}\to \CC^{\mathfrak{X}(k)}\]
sending a Landau-Ginzburg model $(\mathfrak{Y}\xrightarrow{\pi} \mathfrak{X}, f:\mathfrak{Y}(k)\to \CC)$ to $\left(x\mapsto \sum_{[y]\in [\pi^{-1}(x)]}\frac{f(y)}{|\Aut (y)|}\right)$.
It is easy to see that in the case of $\mathfrak{S}=[S/G]$ this provides a map $\int:K_0(\LG_\mathfrak{S})\rightarrow \CC^{\mathfrak{S}(k)}$ (the integration map) from the Grothendieck ring of Landau-Ginzburg models to the complex vector space of complex functions on the objects of the character groupoid $\mathfrak{S}(k)$. 

We have the following statement, completely similar to Proposition \ref{prop:comparisontqft_arith}.
\begin{proposition}
     Let $\FF_q$ be a finite field over $k$. There is a natural transformation
    \[\int: \mathcal{Q}^\LG \Rightarrow \mathcal{Q}^\# \circ (-)(\FF_q).\]
    In particular, this induces a natural transformation of TQFTs
    \[ Z_G^\LG \Rightarrow Z_G^{\FF_q} . \]
    As a consequence, if $W$ is a closed manifold, seen as a bordism $\varnothing\rightarrow \varnothing$, then 
    \[ \int Z_G^{\LG}(W)(1) = Z^{\FF_q}_G(W)(1) = |\mathfrak{X}_G(W)(\FF_q)|.\]
    \qed
\end{proposition}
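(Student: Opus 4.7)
The plan is to mirror the proof of Proposition \ref{prop:comparisontqft_arith}, with the single modification that the potential function must be tracked along with the underlying stack when computing fibre integrals. Recall that for a Landau--Ginzburg model $(\mathfrak{Y} \xrightarrow{\pi} \mathfrak{X}, f)$ over $\mathfrak{X}$ and a point $x \in \mathfrak{X}(\FF_q)$, the map $\int_\mathfrak{X}$ sends $[(\mathfrak{Y},f)]$ to the function $x \mapsto \sum_{[y]\in[\pi^{-1}(x)]} f(y)/|\Aut(y)|$. So the first step is to verify that $\int_\mathfrak{X}$ descends to a well-defined $\CC$-linear map on $\K(\LG_\mathfrak{X})$: given a closed-open decomposition $\mathfrak{Y} = \mathfrak{Z} \sqcup \mathfrak{U}$, the $\FF_q$-fibre $\pi^{-1}(x)$ decomposes accordingly and the restrictions of $f$ agree pointwise, giving the additivity $\int_\mathfrak{X}[(\mathfrak{Y},f)] = \int_\mathfrak{X}[(\mathfrak{Z},f|_\mathfrak{Z})] + \int_\mathfrak{X}[(\mathfrak{U},f|_\mathfrak{U})]$.

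Next I would establish naturality with respect to spans. Given a span $\mathfrak{X} \xleftarrow{a} \mathfrak{Z} \xrightarrow{b} \mathfrak{Y}$ of algebraic stacks, it suffices to show commutativity of
\[ \begin{tikzcd}
    \K(\LG_\mathfrak{X}) \arrow{r}{a^*} \arrow{d}{\int_\mathfrak{X}} & \K(\LG_\mathfrak{Z}) \arrow{r}{b_!} \arrow{d}{\int_\mathfrak{Z}} & \K(\LG_\mathfrak{Y}) \arrow{d}{\int_\mathfrak{Y}} \\
    \CC^{\mathfrak{X}(\FF_q)} \arrow{r}{a^*} & \CC^{\mathfrak{Z}(\FF_q)} \arrow{r}{b_!} & \CC^{\mathfrak{Y}(\FF_q)}
\end{tikzcd} \]
For the left square, apply both paths to a class $[(\mathfrak{T}\xrightarrow{j}\mathfrak{X}, h)]$: the pulled-back LG model is $(\mathfrak{T}\times_\mathfrak{X}\mathfrak{Z} \to \mathfrak{Z}, h\circ\pi_\mathfrak{T})$, and for a point $z\in\mathfrak{Z}(\FF_q)$ the fibre over $z$ of the first projection is equivalent to $j^{-1}(a(z))$ with the restriction of $h$ intact; both sides therefore evaluate to $\sum_{[t]\in[j^{-1}(a(z))]} h(t)/|\Aut(t)|$. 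For the right (pushforward) square, apply both paths to $[(\mathfrak{T}\xrightarrow{j}\mathfrak{Z}, h)]$: the reshuffling of the double sum identical to the one appearing in the proof of Proposition \ref{prop:comparisontqft_arith} yields, for each $y\in\mathfrak{Y}(\FF_q)$,
\[ b_!\left(\int_\mathfrak{Z}[(\mathfrak{T},h)]\right)(y) \;=\; \sum_{[t]\in[(b\circ j)^{-1}(y)]} \frac{h(t)}{|\Aut(t)|} \;=\; \int_\mathfrak{Y}(b_![(\mathfrak{T},h)])(y). \]

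The second clause of the proposition, namely the existence of the induced natural transformation of TQFTs $Z_G^\LG \Rightarrow Z_G^{\FF_q}$, then follows by whiskering the natural transformation $\int: \mathcal{Q}^\LG \Rightarrow \mathcal{Q}^\# \circ (-)(\FF_q)$ with the field theory $\mathcal{F}_G$. The final numerical identity is obtained by evaluating at a closed $n$-manifold $W$ seen as a bordism $\varnothing\to\varnothing$: by Theorem \ref{thm:tqft_LG} one has $Z_G^\LG(W)(1) = [(\mathfrak{X}_G(W)\to\Spec k,\mathbf{1})]$, whose fibre integral over $\Spec k$ is $\sum_{[x]\in[\mathfrak{X}_G(W)(\FF_q)]} 1/|\Aut(x)| = |\mathfrak{X}_G(W)(\FF_q)|$, coinciding with $Z_G^{\FF_q}(W)(1)$ by Proposition \ref{prop:invariant-arithmetic}.

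The main obstacle will be the right (pushforward) square, where one must carefully match automorphism groups between the groupoid fibre $(b\circ j)^{-1}(y)$ and the iterated fibre structure $\coprod_{[(z,\alpha)]\in[b^{-1}(y)]} j^{-1}(z)$; this is precisely the same combinatorial identity underlying the $\K(\Stck)$-case, but one must also verify that the potential $h$ is constant along the equivalences of fibres used to reshuffle the sum, which is automatic since $h$ depends only on the image point in $\mathfrak{T}(\FF_q)$ and the equivalences are induced by $\FF_q$-morphisms of fibre products.
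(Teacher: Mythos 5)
Your proposal is correct and follows exactly the route the paper intends: the paper gives no written proof beyond declaring the statement ``completely similar to Proposition \ref{prop:comparisontqft_arith}'', and your argument is precisely that proof with the potential $h$ carried through the pullback and weighted groupoid-cardinality pushforward computations, followed by whiskering with $\mathcal{F}_G$ and evaluating on a closed manifold. No gaps; the only cosmetic point is that the comparison of module categories implicitly uses restriction of scalars along $\int_k:\K(\LG_k)\to\CC$, which the paper itself also leaves tacit here.
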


Combining Proposition \ref{prop:compGconn} and the proposition above, we see that if $G$ is connected, then there is a natural transformation $Z_G^\LG\Rightarrow Z^\#_{G(\FF_q)}$ comparing the LG-valued TQFT with the arithmetic one.

\section{Applications}
\label{sec:app}

In the previous section, we provided a comparison between the TQFTs considered in this paper, in terms of natural transformations, summarized in diagram \eqref{eq:diagram_natural_transformations}. We refer to this as the \emph{arithmetic-geometric correspondence}: the TQFT $Z^\#_G$ has an arithmetic flavour as it counts solutions to algebraic equations on finite fields, whereas the TQFT $Z_G$ has an intrinsic geometric nature, in the sense that it computes a subtle algebraic invariant of the character stack.

In this section, we analyze two types of implications of the correspondence. In one implication, we use the geometric TQFT $Z_G$ in order to extract information about the character table of the algebraic group $G$ over finite fields. For example, the first column of the character table, consisting of the dimensions of the irreducible characters and their multiplicities, can easily be determined, but also some other sums of columns.

In the other implication, we attempt to lift the eigenvectors of the arithmetic TQFT $Z^\#_G$ to eigenvectors of the geometric TQFT $Z_G$. For $G$ the group of upper triangular matrices (of small rank), we show there exist canonical such lifts and, moreover, that these lifts immensely simplify the computations for $Z^\#_G$, as performed in \cite{hablicsek2022virtual}. That is, in \cite{hablicsek2022virtual}, computed-assisted computations were performed on a submodule with $16$ generators, while with the simplification, only $2$ or $3$ generators are needed.

The bridge between these two worlds is Corollary \ref{cor:relation_eigenvalues_geometric_arithmetic}, which follows directly from the arithmetic-geometric correspondence \eqref{eq:diagram_natural_transformations}. To state it properly, recall that $Z_G\left(\bdgenus[0.7]\right)$ is an endomorphism of the $\K(\Stck_R)$-module $\K(\Stck_{[G/G]})$. However, suppose that the $\K(\Stck_R)$-submodule
$$
    \mathcal{V} = \langle Z_G\left(\bdgenus[0.7]\right)^g(\textup{\bfseries 1}_G) \textup{ for } g \in \ZZ_{\ge 0} \rangle \subseteq \K(\Stck_{[G/G]})
$$
is finitely generated. Then $Z_G\left(\bdgenus[0.7]\right)$ is an endomorphism of $\mathcal{V}$. Thus, if $x_1, \ldots, x_n \in \mathcal{V}$ is a set of generators, we have that for all $i = 1, \ldots, n$ we can write
$$
    Z_G\left(\bdgenus[0.7]\right)(x_i) = \sum_{j=1}^n a_{ij} x_j, \quad \textrm{with } a_{ij} \in \K(\Stck_R).
$$
With this information, in analogy with the vector space case, we can form the matrix $A = (a_{ij})$ of coefficients representing $Z_G\left(\bdgenus[0.7]\right)$. Note that $A$ is not necessarily unique. With this matrix, given a ring morphism $R \to \FF_q$, using the construction of Definition \ref{defn:integral-fibers}, we can form the $n \times n$ matrix of complex numbers
\[ \int_{R} A = \left(\int_{R} a_{ij}\right)_{ij} \quad \textup{ where } \quad \int_{R} a_{ij} \in \CC^{R(\FF_q)} = \CC. \]
With this notation, the main result of this section is the following.

\begin{corollary}
    \label{cor:relation_eigenvalues_geometric_arithmetic}
    Let $G$ be a connected algebraic group over a finitely generated $\ZZ$-algebra $R$, and let $\FF_q$ be a finite field extending $R$. Denote by $\textup{\bfseries 1}_G \in \K(\Stck_{[G/G]})$ the class of the inclusion of the identity in $G$.
    If the $\K(\Stck_R)$-module $\mathcal{V} = \langle Z_G\left(\bdgenus[0.7]\right)^g(\textup{\bfseries 1}_G) \textup{ for } g \in \ZZ_{\geq 0} \rangle$ is finitely generated, then:
    \begin{enumerate}[(i)]
        \item The submodule $\int_{[G/G]} \mathcal{V}$ of $\CC^{[G/G](\FF_q)}$, both seen as $\K(\Stck_R)$-modules, is generated by the sums of equidimensional irreducible characters.
        \item The dimensions of the complex irreducible characters of $G(\FF_q)$ are precisely given by
        \[ d_i = \frac{|G(\FF_q)|}{\sqrt{\lambda_i}} \]
        for $\lambda_i \in \ZZ$ the eigenvalues of $\int_R A$, where $A$ is any matrix representing the linear map $Z_G\left(\bdgenus[0.7]\right)$ with respect to a generating set of $\mathcal{V}$.
        \item Write $\int_R \textup{\bfseries 1}_G = \sum_i v_i$, where $v_i$ are eigenvectors of $\int_R A$ with eigenvalues $\lambda_i$. Then each $v_i$ is a scalar multiple of the sum of equidimensional characters
        \[ v_i = \frac{d_i}{|G(\FF_q)|}\sum_{\substack{\chi \in \hat{G}\textup{\ s.t.}  \\ \chi(1) = d_i}} \chi .\]
    \end{enumerate}
\end{corollary}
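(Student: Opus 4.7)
\medskip

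\textbf{Proof plan.} The strategy is to deduce the three assertions from the arithmetic-geometric correspondence (Corollary \ref{cor:natural_transformation_geometric_arithmetic}) by transporting the eigenvalue problem for $Z_G\!\left(\bdgenus[0.7]\right)$ on $\mathcal V$ to the classical spectral decomposition of $\mu\circ\delta$ in the representation ring $R_\CC(G(\FF_q))$, where the eigenstructure is already computed in Section \ref{sec:representation_ring}.

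First, evaluate the natural transformation $\int\colon Z_G \Rightarrow (\int_R)^*\!\circ Z^\#_{G(\FF_q)}$ on $S^1$; combined with Proposition \ref{prop:equiv-frobenius}, this yields a $\K(\Stck_R)$-linear map $\int_{[G/G]}\colon \K(\Stck_{[G/G]}) \to R_\CC(G(\FF_q))$ that intertwines $Z_G\!\left(\bdgenus[0.7]\right)$ with $\mu\circ\delta$. Next, one must identify $\int_{[G/G]}(\textup{\bfseries 1}_G) = \mathbbm{1}_e$: since $\textup{\bfseries 1}_G$ is the class of $\BG \to [G/G]$ and $G$ is connected, Lang's theorem trivializes all $G$-torsors over $\FF_q$, and unwinding the $2$-fiber product shows that the fiber over $[g]\in [G/G](\FF_q)$ is empty for $[g]\ne [e]$ and has groupoid cardinality $1$ over $[e]$. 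By Schur's first-column orthogonality,
\[\mathbbm{1}_e \;=\; \frac{1}{|G(\FF_q)|}\sum_{\chi\in\hat G}\chi(1)\,\chi \;=\; \sum_d \frac{d}{|G(\FF_q)|}\,v_d \quad\text{with}\quad v_d = \sum_{\chi(1)=d}\chi,\]
and each $v_d$ is an eigenvector of $\mu\circ\delta$ with eigenvalue $|G(\FF_q)|^2/d^2$, these eigenvalues being pairwise distinct.

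For (i), the iterates $(\mu\circ\delta)^g(\mathbbm{1}_e) = \sum_d (|G(\FF_q)|^{2g-1}/d^{2g-1})\,v_d$ form a Vandermonde-type system in the variables $|G(\FF_q)|^2/d^2$, so their $\CC$-span equals $\bigoplus_d \CC\,v_d$, which is precisely $\int_{[G/G]}\mathcal V$. For (ii), applying $\int_{[G/G]}$ to the defining relation $Z_G\!\left(\bdgenus[0.7]\right)(x_i) = \sum_j a_{ij}x_j$ and using $\int_R$-linearity gives
\[(\mu\circ\delta)\left(\int_{[G/G]} x_i\right) = \sum_j \left(\int_R a_{ij}\right)\int_{[G/G]} x_j,\]
so $\int_R A$ represents $\mu\circ\delta$ on $\int_{[G/G]}\mathcal V$ in the generators $\int_{[G/G]} x_i$; since $\mu\circ\delta$ acts diagonally with distinct eigenvalues $|G(\FF_q)|^2/d^2$, the eigenvalues of $\int_R A$ are precisely those values, yielding $d_i = |G(\FF_q)|/\sqrt{\lambda_i}$. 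Statement (iii) then follows by matching the explicit decomposition of $\mathbbm{1}_e$ above against the general eigenvector decomposition, identifying the $\lambda_i$-eigenvector contribution as $(d_i/|G(\FF_q)|)\,v_{d_i}$.

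The most delicate step is the second: correctly evaluating the groupoid cardinality of the $2$-fiber of $\BG \to [G/G]$ over $[e]$. One must carefully track how an automorphism of the trivial $G$-torsor is identified, via the identity inclusion, with a choice of element in the centralizer $C(e) = G(\FF_q)$, so that the fiber becomes equivalent to the trivial groupoid (cardinality $1$) rather than to a discrete collection of $|G(\FF_q)|$ points. A secondary subtlety is that the generators $x_i$ need not be $\K(\Stck_R)$-independent, so the matrix $A$ is non-unique; however, the asserted eigenvalues are intrinsic, as they coincide with the spectrum of $\mu\circ\delta$ on the image subspace $\int_{[G/G]}\mathcal V$.
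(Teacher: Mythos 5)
Your proposal is correct and follows essentially the same route as the paper: transport the problem along the natural transformation of Corollary \ref{cor:natural_transformation_geometric_arithmetic} (together with Proposition \ref{prop:equiv-frobenius}), identify $\int_{[G/G]}\textup{\bfseries 1}_G$ with the delta function $\frac{1}{|G(\FF_q)|}\sum_{\chi}\chi(1)\chi$, and use that the irreducible characters are eigenvectors of $Z^{\#}_{G(\FF_q)}\left(\bdgenus[0.7]\right)$ with eigenvalues $|G(\FF_q)|^2/\chi(1)^2$ to deduce (i)--(iii). Your explicit computation of the groupoid fiber of $\BG\to[G/G]$ and the Vandermonde justification of the span of the iterates only spell out steps the paper leaves implicit, and your handling of the non-uniqueness of $A$ matches the paper's level of detail.
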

\begin{proof}
\textit{(i)} Since $G$ is connected, by Corollary \ref{cor:natural_transformation_geometric_arithmetic}, we get a commutative square
\[ \begin{tikzcd}[column sep=5em]
    \K(\Stck_{[G/G]}) \arrow{r} \arrow[swap]{d}{\int_{[G/G]}} & \K(\Stck_{[G/G]}) \arrow{d}{\int_{[G/G]}} \\
    \CC^{[G(\FF_q)/G(\FF_q)]} \arrow{r} & \CC^{[G(\FF_q)/G(\FF_q)]}
\end{tikzcd} \]
where the top and bottom map are given by $Z_G\left(\bdgenus[0.7]\right)$ and $Z^\#_{G(\FF_q)}\left(\bdgenus[0.7]\right)$, respectively. Also, we see $\CC^{[G/G](\FF_q)} = R_\CC(G(\FF_q))$ as a $\K(\Stck_{R})$-module through restriction of scalars via $\int_{R}$. Now, observe that $\int_{[G/G]}\textup{\bfseries 1}_G \in R_\CC(G(\FF_q))$ is the delta-function over the identity of $ G(\FF_q)$, so we can write
$$
    \int_{[G/G]}\textup{\bfseries 1}_G=\frac{1}{|G(\FF_q)|}\sum_{\chi\in \hat{G}} \chi(1)\chi.
$$
More in general, using that the irreducible characters are eigenvectors of $Z^\#_{G(\FF_q)}\left(\bdgenus[0.7]\right)$ with eigenvalue $|G(\FF_q)|^2/\chi(1)^2$, we have
$$
    Z^\#_{G(\FF_q)}\left(\bdgenus[0.7]\right)^g \left(\int_{[G/G]}\textup{\bfseries 1}_G \right) =\frac{1}{|G(\FF_q)|}\sum_{\chi\in \hat{G}} \frac{|G|^{2g}}{\chi(1)^{2g-1}}\chi(1)\chi,
$$
which shows that the space generated by the vectors $Z^\#_{G(\FF_q)}\left(\bdgenus[0.7]\right)^g \left(\int_{[G/G]}\textup{\bfseries 1}_G \right) \in R_\CC(G(\FF_q))$ for $g = 0, 1, \ldots$ is generated by the sums $\sum_{\chi(1) = n} \chi$ of irreducible characters of dimension $n \geq 1$. But, by the naturality of $\int_{[G/G]}$, this space is exactly $\int_{[G/G]} \mathcal{V}$.

    \textit{(ii)}
By naturality, $Z^\#_{G(\FF_q)}\left(\bdgenus[0.7]\right)$ is an endomorphism of the submodule $\int_{[G/G]} \mathcal{V} \subseteq  R_\CC(G(\FF_q))$. Furthermore, we see that $\int_{[G/G]} A$ represents this restriction written in the set of generators $\int_{[G/G]} x_i$ of $R_\CC(G(\FF_q))$, where $x_1, \ldots, x_n$ are generators of $\mathcal{V}$. Hence, the non-zero eigenvalues and eigenvectors of this matrix are also eigenvalues and eigenvectors of $Z^\#_{G(\FF_q)}\left(\bdgenus[0.7]\right)$, which are of the form $\frac{|G(\FF_q)|^2}{\chi(1)^2}$. Since all these dimensions appear in one of the generators of $\int_{[G/G]}$, this proves the statement.

    \textit{(iii)} We can write $\int_{[G/G]}\textup{\bfseries 1}_G$ in two ways: first as $\frac{1}{|G(\FF_q)|}\sum_{\chi\in \hat{G}} \chi(1)\chi$, second as $\sum_i v_i$ where $v_i$ are the eigenvalues of $\int_R A$. The statement follows from this.
\end{proof}

\begin{remark}
    \label{rem:nonconnected}
    In a previous paper \cite{gonzalez2022virtual}, the authors of this paper considered a different computational framework to investigate the virtual classes of character stacks. In that framework, additional basepoints were chosen on the manifolds and the bordisms to make the field theory functorial. We note that the main result of this paper, Theorem \ref{introthm:nattrans}, also holds with some natural alterations in the case of that framework. This allows us to compare the endomorphism $Z_G\left(\bdgenus[0.7]\right)$ of the geometric TQFT to the character table of $G$ when $G$ is not necessarily connected.
\end{remark}

\subsection{From geometry to characters}

Let us illustrate how the geometry of $Z_G$ implies, through the arithmetic-geometric correspondence, information on the arithmetic side: the representation theory of $G$ over finite fields $\FF_q$. As a simple example, we take $G$ to be the affine linear group of rank 1,
\[ \AGL_1(k) = \left\{ \begin{pmatrix} a & b \\ 0 & 1 \end{pmatrix} : a \ne 0 \right\} . \]
We introduce the following two locally closed subvarieties of $\AGL_1$:
\[ I = \left\{ \begin{pmatrix} 1 & 0 \\ 0 & 1 \end{pmatrix}\right\}, \qquad J = \left\{ \begin{pmatrix} 1 & b \\ 0 & 1 \end{pmatrix} : b \ne 0 \right\} \]
whose natural inclusions $I \to G$ and $J \to G$ induce elements $[I/G], [J/G] \in \K(\Stck_{[G/G]})$. The fact that the elements of the commutator subgroup have ones on the diagonal translates into the fact that the $\K(\Stck_k)$-submodule generated by $[I/G]$ and $[J/G]$ will be invariant under $Z_G\left(\bdgenus[0.7]\right)$.

Denoting $q = [\AA^1_k]$, one easily shows (as in \cite{gonzalez2022virtual} or \cite{gologmun05}) that
\[ Z_G\left(\bdgenus\right) = \left(
\begin{array}{cc}
    q^2 (q - 1) & q^2 (q - 2) (q - 1) \\
    q^2 (q - 2) & q^2 (q^2 - 3 q + 3)
\end{array} \right) \]
with respect to $[I/G]$ and $[J/G]$. This matrix can be diagonalized with eigenvectors $(q - 1)([I/G] + [J/G])$ and $(q - 1)[I/G] - [J/G]$ and eigenvalues $q^2 (q - 1)^2$ and $q^2$, respectively.

As the computations for $Z_G\left(\bdgenus[0.7]\right)$ can be performed over $\ZZ$, they are also valid over any finite field $k = \FF_q$. Under the natural transformation of Corollary \ref{cor:natural_transformation_geometric_arithmetic}, the eigenvectors transform to the following functions on $\AGL_1(\FF_q)$: 
\[\left(\int_{[G/G]} (q - 1) ([I/G] + [J/G]) \right)(g) = \begin{cases} q - 1 & \textup{ if } g = \left(\begin{smallmatrix} 1 & b \\ 0 & 1 \end{smallmatrix}\right) , \\ 0 & \textup{ otherwise} , \end{cases}\]
and
\[ \left(\int_{[G/G]} (q - 1) [I/G] - [J/G]\right)(g) = \begin{cases} (q - 1) & \textup{ if } g = 1, \\ -1 & \textup{ if } g \in J, \\ 0 & \text{otherwise} . \end{cases} \]
From Section \ref{sec:representation_ring} we know that the eigenvalues, which are $q^2 (q - 1)^2$ and $q^2$, correspond to the values $\left(\frac{|\AGL_1(\FF_q)|}{\chi(1)}\right)^2$ for the irreducible characters $\chi$. Hence, the irreducible characters are of dimension $1$ or $q - 1$. The above eigenvectors are already properly normalized, meaning $\eta(1) = \frac{1}{|G|} (v_1 + (q - 1) v_{q - 1})$. Therefore, the first function is the sum of the $(q - 1)$ $1$-dimensional irreducible characters of $\AGL_1(\FF_q)$ and the second function is the character of the unique $(q - 1)$-dimensional representation of $\AGL_1(\FF_q)$. 
Note that in the special case of $q = 2$, there are 2 irreducible characters both of dimension $1$, which are precisely given by the two above functions.

\subsection{From characters to geometry}

Let us illustrate how the arithmetic information, i.e.\ the representation theory of $G$ over $\FF_q$, provides geometric insight into the TQFT $Z_G$.
We will show how the representation theory of the groups of unipotent upper triangular matrices over $\FF_q$ can be used to simplify the corresponding geometric TQFT $Z_G$, which was also considered and computed in \cite{hablicsek2022virtual}. In particular, we provide a new smaller set of generators for this TQFT motivated by the arithmetic side. More precisely, we obtain this new generating set by canonically lifting the sums of equidimensional characters to the Grothendieck ring of varieties. These generators will be given by classes of locally closed subvarieties of $G$.

\begin{example}[Unipotent $3 \times 3$ matrices]
Consider the group $\UU_3(\FF_q)$ of unipotent matrices of rank $3$ over a finite field $\FF_q$,
\begin{equation} \label{eq:presentation_U3} \UU_3(\FF_q) = \left\{ \begin{pmatrix} 1 & x & y \\ 0 & 1 & z \\ 0 & 0 & 1 \end{pmatrix} : x, y, z \in \FF_q \right\} . \end{equation}
The irreducible complex characters of $\UU_3(\FF_q)$ are of dimension $1$ or $q$. Denote the set of $1$-dimensional characters by $X_1$ and of the $q$-dimensional characters by $X_q$. Summing the $1$-dimensional characters, we find that
\[ v_1 = \sum_{\chi \in X_1} \chi \quad \textup{ is given by } \quad v_1 \begin{pmatrix} 1 & x & y \\ 0 & 1 & z \\ 0 & 0 & 1 \end{pmatrix} = \begin{cases} q & \textup{ if } x = z = 0 , \\ 0 & \textup{ otherwise} , \end{cases} \]
and summing the $q$-dimensional characters, we find that
\[ v_2 = \sum_{\chi \in X_q} \chi \quad \textup{ is given by } \quad v_2 \begin{pmatrix} 1 & x & y \\ 0 & 1 & z \\ 0 & 0 & 1 \end{pmatrix} = \begin{cases} -q & \textup{ if } x = z = 0 \textup{ and } y \ne 0, \\ q(q - 1) & \textup{ if } x = y = z = 0, \\ 0 & \textup{ otherwise}. \end{cases} \]
These eigenvectors can be canonically lifted to the elements in $\K(\Stck_{[G/G]})$ for $G = \UU_3$ given by
\[ q [\{ x = z = 0 \}] \quad \textup{ and } \quad -q [\{ x = z = 0, \; y \ne 0 \}] + q (q - 1) [\{ x = y = z = 0 \}] , \]
in terms of classes of $G$-equivariant subvarieties of $\UU_3$, where $x, y, z$ are as in the presentation \eqref{eq:presentation_U3}.
Indeed, from the computations of \cite{hablicsek2022virtual} it can be seen that these elements are eigenvectors of $Z_G$. The eigenvalues are $q^6$ and $q^4$, respectively, where now $q = [\mathbb{A}^1_k]$.
In fact, the submodule of $\K(\Stck_{[G/G]})$ generated by these two eigenvectors contains $Z_G(\bdunit[0.9])(1)$ and is invariant under $Z_G(\bdgenus[0.7])$, and is therefore a simplification of the submodule used in \cite{hablicsek2022virtual}, which used $5$ generators.
\end{example}

\begin{example}[Unipotent $4 \times 4$ matrices]
Consider the group $\UU_4(\FF_q)$ of unipotent matrices of rank $4$ over a finite field $\FF_q$. This group has three families of irreducible complex characters: the $1$-dimensional characters $X_1$, the $q$-dimensional characters $X_q$ and the $q^2$-dimensional characters $X_{q^2}$. Summing characters of the same dimension, we find
\begin{align*}
    \sum_{\chi \in X_1} \chi \begin{pmatrix} 1 & a & b & c \\ 0 & 1 & d & e \\ 0 & 0 & 1 & f \\ 0 & 0 & 0 & 1 \end{pmatrix} &= \begin{cases} q^2 & \textup{ if } a = d = f = 0, \\ 0 & \textup{ otherwise} , \end{cases} \\
    \sum_{\chi \in X_q} \chi \begin{pmatrix} 1 & a & b & c \\ 0 & 1 & d & e \\ 0 & 0 & 1 & f \\ 0 & 0 & 0 & 1 \end{pmatrix} &= \begin{cases} q^4 & \textup{ if } a = b = d = e = f = 0, \\ -q^2 & \textup{ if } a = d = f\\ 0 & \textup{ otherwise} , \end{cases} \\
    \sum_{\chi \in X_{q^2}} \chi \begin{pmatrix} 1 & a & b & c \\ 0 & 1 & d & e \\ 0 & 0 & 1 & f \\ 0 & 0 & 0 & 1 \end{pmatrix} &= \begin{cases} q^3 (q - 1) & \textup{ if } a = b = c = d = e = f = 0, \\ -q^3 & \textup{ if } a = b = d = e = f = 0 \textup{ and } c \ne 0 , \\ 0 & \textup{ otherwise} . \end{cases}
\end{align*}
Again, we can canonically lift these functions to elements in $\K(\Stck_{[G/G]})$ for $G = \UU_4$ given by
\begin{align*}
    & q^2 [\{ a = d = f = 0 \}], \\
    & q^4 [\{ a = b = d = e = f = 0 \}] - q^2 [\{ a = d = f = 0 \}] , \\
    \textup{ and } \quad & q^3 (q - 1) [\{ a = b = c = d = e = f = 0 \}] - q^3 [\{ a = b = d = e = f = 0 \textup{ and } c \ne 0 \}] .
\end{align*}
From the computations of \cite{hablicsek2022virtual} it can be seen that these elements are indeed eigenvectors of $Z_G$.
As before, we can use these three eigenvectors to generate a submodule of $\K(\Stck_{[G/G]})$ replacing the one from \cite{hablicsek2022virtual}, which used $16$ generators, a significant simplification.
\end{example}

\begin{example}[$\GG_m \rtimes \ZZ/2\ZZ$]\label{ex:gmz2a}
A very interesting phenomenon arises in the case of the group $G = \GG_m \rtimes \ZZ/2\ZZ$ where the group $\ZZ/2\ZZ$ acts on $\GG_m$ by negation, namely, that it is impossible that the geometric TQFT consists of generators that are subvarieties of $G$! Indeed, consider the group over a finite field $\FF_q$, and assume for simplicity that $q$ is odd. Using Remark \ref{rem:nonconnected}, we can study the geometric TQFT from the character table. The sums of equidimensional characters can be expressed as
\[ {\arraycolsep=1em \def\arraystretch{1.5}
\begin{array}{c|ccc} 
    & \{ 1 \} & \{ t \in G(\FF_q) \mid t \textup{ is a square} \} & \{ t \in G(\FF_q) \mid t \textup{ is not a square} \}  \\ \hline 
    v_1 =\sum_{\varepsilon, \delta} \rho_{\varepsilon, \delta} & 4 & 4 & 0 \\
    v_2=\sum_k \tau_k & q - 3 & -2 & 0 
\end{array}} \]
In this case, lifting the eigenvectors $v_1$ and $v_2$ to elements of the Grothendieck ring is slightly more involved: there exists no `subvariety of squares' of $G$. However, we can instead consider the variety $X = \GG_m$ over $\GG_m$ given by $x \mapsto x^2$. Counting the fibers of $X \to \GG_m$ over $\FF_q$, we find
\[ | X(\FF_q)|_t | = \left\{ \begin{array}{cl} 2 & \textup{ if $t$ is a square}, \\ 0 & \textup{ if $t$ is not a square} . \end{array}\right. \]
This suggests that eigenvectors of $Z_G$ over $R = \ZZ[\frac{1}{2}]$ are given by
\[ 2 [X] \quad \textup{ and } \quad (q - 1) [\{ 1 \}] - [X] . \]
Indeed, these elements agree with the generators for the submodule of $\K(\Stck_{[G/G]})$ as used for the computations in \cite{gonzalez2022virtual}.
\end{example}

\bibliographystyle{plain}
\addcontentsline{toc}{section}{References}
\bibliography{bibliography}

\end{document}